\newcommand{\al}{{\alpha}}
\newcommand{\T}{\tau} 
\newcommand{\C}{{\mathbb C}}
\newcommand{\J}{{\mathcal J}}
\newcommand{\F}{{\mathcal F}}
\newcommand{\G}{{\mathcal G}}
\newcommand{\hh}{{\mathcal H}}
\newcommand{\lb}{\lambda}
\newcommand{\pn}{{\mathbb P}^{n}}
\newcommand{{\pf}}{{\bf Proof. }}
\theoremstyle{plain}
\newtheorem{thm}{Theorem}[section]
\newtheorem{lem}[thm]{Lemma}
\newtheorem{cor}[thm]{Corollary}
\newtheorem{prop}[thm]{Proposition}
 \theoremstyle{definition}
\newtheorem{deff}[thm]{Definition}
\newtheorem{Conj}[thm]{Conjecture}
\newtheorem{Rem}[thm]{Remark}
\newtheorem{?}[thm]{Problem}
\newtheorem{Ex}[thm]{Example}
\makeatletter\@addtoreset{equation}{section} \makeatother
\tikzset{
  treenode/.style = {align=center, inner sep=1pt, text centered,
    font=\sffamily},
  arn_n/.style = {treenode, circle, white, font=\sffamily\bfseries, draw=black,
    fill=black, text width=1.5em},
  arn_r/.style = {treenode, circle, black, draw=black,
    text width=1.5em, very thick},
  arn_x/.style = {treenode, rectangle, draw=black,
    minimum width=0.5em, minimum height=0.5em}
}
\begin{document}

\title{Self-similarity and Spectral Dynamics}
\author[B. Goldberg and R. Yang]{Bryan Goldberg and Rongwei Yang}
\address{State University of New York At Albany \\ Department of Mathematics \\
Albany NY 12222} 
\email{bgoldberg@albany.edu, ryang@albany.edu}
\subjclass[2010]{Primary 43A65  47A13; Secondary 37F10} 
\keywords{projective spectrum, self-similar representation, infinite dihedral group, Grigorchuk group, indeterminacy set, Fatou set, Julia set, Tchebyshev polynomial} 

\begin{abstract} For a tuple $A= (A_0, A_1, \ldots , A_n)$ of elements in a unital Banach algebra $\mathcal{B}$, its  
\textit{projective (joint) spectrum} $p(A)$ is the collection of $z\in\mathbb{P}^{n}$ such that $A(z)=z_0A_0+z_1 A_1 + \ldots z_n A_n$ is 
not invertible. If the tuple $A$ is associated with the generators of a finitely generated group, then $p(A)$ is simply called the projective spectrum of the group. This paper investigates a connection between self-similar group representations and an induced polynomial map on the projective space that preserves the projective spectrum of the group. The focus is on two groups: the infinite dihedral group $D_\infty$ and the Grigorchuk group ${\mathcal G}$ of intermediate growth. The main theorem states that the Julia set of the induced rational map $F$ for $D_\infty$ is the union of the projective spectrum with the extended indeterminacy set. Moreover, the limit function of the iteration sequence $\{F^{\circ n}\}$ on the Fatou set is also fully described. The result has an application to the group ${\mathcal G}$ and gives rise to a conjecture about its associated Julia set.

\end{abstract}

\maketitle

\section*{Notations}

\begin{tabular}{ l l}
$\hat{\mathbb{C}}$ & $\mathbb{C} \cup \infty$, the Riemann sphere\\
$H^{\circ n}(z)$ & the $n^{th}$ iteration of a function $H(z)$\\
$\partial T$ & the boundary of the rooted binary tree\\
$\pi$ & the Koopman representation\\
$\lambda$ & the left regular representation\\
$E$ & the extended indeterminacy set\\
$\mathbb{P}^n$ & complex projective space of dimension $n$\\
$\phi$ & the cannonical projection from $\C^{n+1}$ to ${\mathbb P}^n$\\
$D_\infty$ &  $\langle a,t \mid a^2=t^2=1 \rangle$, the infinite dihedral group\\
$\mathcal{G}$ & the Grigorchuk group of intermediate growth\\
$\F(H)$ & the Fatou set of map $H$\\
$\J(H)$ & the Julia set of map $H$\\
$T(x)$ & $2x^2-1, x\in \hat{\mathbb{C}}$, the second Tchebyshev polynomial of the first kind
\end{tabular}

\section{Introduction} 

Let ${\mathcal B}$ be a complex unital Banach algebra and $A=(A_0, A_1, \cdots, A_n)$ be a tuple of linearly independent elements in ${\mathcal B}$. The multiparameter pencil \[A(z):=z_0A_0+z_1A_1+\cdots +z_nA_n\] is an important subject of study in numerous fields. The notion of projective (joint) spectrum is defined in \cite{Ya} as follows.

\begin{deff} For a tuple $(A_0, A_1, \cdots, A_n)$ of elements in a unital Banach algebra ${\mathcal B}$, its projective spectrum is defined as \[P(A) = \{z\in {\C}^{n+1}\mid A(z)\ \text{is not invertible}\}.\] The {\em projective resolvent set} refers to the complement, $P^c(A)={\C}^{n+1}\setminus P(A)$.
\end{deff}
Since $A(z)$ is homogeneous, projective spectrum and projective resolvent set can be defined in projective space as $p(A) = \{z\in {\mathbb{P}}^{n}\mid A(z)\ \text{is not invertible}\}$,
and $p^c(A)={\mathbb{P}}^{n}\setminus p(A)$. If $\phi$ denotes the cannonical projection from $\C^{n+1}$ to ${\mathbb P}^n$, then $p(A)=\phi(P(A))$ and $p^c(A)=\phi(P^c(A))$.
Projective spectrum provides an effective mechanism to study several operators simultaneously. It reveals joint behaviors of $A_0, A_1, \cdots, A_n$ as well as interactions among individual elements. Properties of projective spectrum have been previously investigated in a series of papers, such as \cite{BCY,CY,CSZ,DY,GOY,HWY,MQW17,SYZ} as well as some additional references therein. A particularly interesting case is when the tuple $A$ is associated with a finitely generated group. To be more precise, consider a group $G$ with a finite generating set $S=\{g_1,\ g_2,\ \dots,\ g_s\}$ and a unitary representation $\pi$ of $G$ on a complex Hilbert space ${\mathcal H}$, and let the multiparameter pencil be $A_{\pi}(z)=z_0I+z_1\pi(g_1)+\cdots +z_s\pi(g_s)$. Then the projective spectrum $P(A_\pi)$ encapsulates information about $G$ as well as the representation $\pi$. The most telling situation is when $\pi$ is finite dimensional, in which case the homogeneous polynomial
 \begin{equation*}
Q_{\pi}(z):=\det \big(z_0I+z_1\pi(g_1)+z_2\pi(g_2)+\cdots +z_s\pi(g_s)\big)
\end{equation*}
is called the characteristic polynomial of $G$ with respect to the representation $\pi$ and the generating set $S$. Clearly, in this case the projective spectrum $P(A_\pi)$ (or $p(A_\pi)$) is the zero set of $Q_{\pi}(z)$ in $\C^{n+1}$ (or $\pn$). In the case when $G=\{1, g_1, ..., g_n\}$ is a finite group and $\lambda$ is the left regular representation of $G$, the characteristic polynomial  $Q_{\lambda}(z)$ is called the group determinant of $G$, and its study can be traced back to Dedekind and Frobenius (\cite{Cu2, De, Fr}). It is indeed the birthplace of group representation theory. 
 Some further studies on group determinant can be found in \cite{Cu,Di21,Di75,FS}, and new studies related to projective spectrum of groups can be found in \cite{GY,HY18,ST,SYZ}. Projective spectrum can also detect some subtler information about $G$ and the representation $\pi$. 
For example, it is indicated in \cite{GY} with a simple proof that if $\pi$ and $\rho$ are two {\em weakly equivalent} unitary representations of $G$ then $P(A_\pi)=P(A_\rho)$. This fact will be used later in Section \ref{sec: WeakContainment}. 

Another important study of spectral theory on groups was done by Grigorchuk and his collaborators on the group $\mathcal G$ of intermediate growth. Some examples of this investigation include \cite{GN07,GNS,GNS2, GS}. Among many other results, they discovered that $\mathcal G$ has a {\em self-similar representation} on the rooted binary tree. More remarkably this self-similarity induces a rational map on a certain spectral set of $\mathcal G$ whose dynamical properties link tightly to spectral properties of ${\mathcal G}$.  This interaction between dynamics and spectral theory (called spectral dynamics for short) is the motivation for this paper, and we shall examine it from the viewpoint of projective spectrum.  The focus here is on the infinite dihedral group $D_{\infty}= \langle a, t \mid a^2=t^2=1 \rangle$ and the Grigorchuk group ${\mathcal G}$ of intermediate growth. For $D_\infty$ we consider the linear pencil $A_{\pi}(z)=z_0I+z_1\pi(a)+z_2\pi(t)$, where $\pi$ is the Koopman representation of $D_\infty$ on the rooted binary tree. Then the self-similarity of $\pi$ gives rise to a polynomial map $F: {\mathbb P}^2\to {\mathbb P}^2$. The following theorem determines the Julia set of $F$ (cf. Theorem \ref{thm: main}). \\

{\bf Main Theorem}: The Julia set of $F$ is the union of the projective spectrum $p(A_\pi)$ with the extended indeterminacy set of $F$.\\

This theorem and several other results about $D_\infty$ find applications to the Grigorchuk group ${\mathcal G}$, and they give rise to a conjecture about the ${\mathcal G}$'s associated Julia set. This paper is organized as follows.

\tableofcontents

\section{Projective spectrum of the dihedral group}

Although the dihedral group $D_\infty$ is probably the simplest non-abelian group, it has a wide range of applications, including pertinence to the Weyl group of simple Lie algebras as well as to the study of sophisticated groups such as the Grigorchuk group of intermediate growth (\cite{GY, Ne18}). The projective spectrum of $D_\infty$ with respect to the left regular representation $\lambda$ was computed and studied in depth in \cite{GY}. Here we give a different perspective of the spectrum through a spectral resolution of $\lambda$.

Consider the following two-dimensional representation $\rho_{\theta}$ of $D_{\infty}$ given by
\begin{equation}
\rho_{\theta}(a)=\begin{bmatrix}
         0 & e^{i \theta}\\
        e^{-i \theta} & 0
        \end{bmatrix},\ \ \ \rho_{\theta}(t)=\begin{bmatrix}
                                          0 & 1\\
                                          1 & 0
                                          \end{bmatrix},
\end{equation}
where $\theta\in [0, 2\pi)$. It is known from \cite{Hal,RS} that every irreducible unitary representation of $D_{\infty}$ is either one dimensional or of the form $\rho_{\theta}$ for some $\theta\in (0,\pi)$.
It is observed in \cite{GY} that the left regular representation of $D_{\infty}$ is equivalent to the following representation $\lambda$ of $D_{\infty}$ on $L^2({\mathbb T},\ \frac{d\theta}{2\pi})\oplus L^2({\mathbb T},\ \frac{d\theta}{2\pi})$ defined by:
\begin{equation}
\lambda  (a) = 
\begin{bmatrix}
0 & T \\
T^\ast & 0
\end{bmatrix}, \ \ \
\lambda (t) =
\begin{bmatrix}
0 & I_0 \\
I_0 & 0
\end{bmatrix},
\label{eq: lambda}
\end{equation}
where $I_0$ is the identity operator on $L^2({\mathbb T}, \frac{d \theta}{2 \pi})$, and 
$T: L^2({\mathbb T},\ \frac{d\theta}{2\pi})\to L^2({\mathbb T},\ \frac{d\theta}{2\pi})$
 is the bilateral shift operator, i.e., the unitary operator defined by $Tf(e^{i\theta})=e^{i\theta}f(e^{i\theta})$. If we let 
\begin{equation}
T = \displaystyle \int_0^{2\pi} e^{i \theta} dE(e^{i \theta})
\label{eq: T}
\end{equation}
be the spectral resolution of $T$, then bringing together (\ref{eq: lambda}) and (\ref{eq: T}), we have the following connection:
\begin{align*}
 \lambda  (a) &= 
\begin{bmatrix}
0 &  \displaystyle \int_0^{2\pi} e^{i \theta} dE(e^{i \theta}) \\
 \displaystyle \int_0^{2\pi} e^{-i \theta} dE(e^{i \theta}) & 0
\end{bmatrix}\\
&=\displaystyle \int_0 ^{2\pi}
\begin{bmatrix}
0 & e^{i \theta} \\
 e^{-i \theta} & 0
\end{bmatrix}
dE(e^{i\theta})=
\int_0 ^{2 \pi} \rho _\theta(a) dE(e^{i \theta});
\end{align*}
and
\begin{align*}
\lambda  (t) &= 
\begin{bmatrix}
0 &  \displaystyle \int_0^{2\pi}dE(e^{i \theta}) \\
 \displaystyle \int_0^{2\pi}  dE(e^{i \theta}) & 0
\end{bmatrix}\\
&=\displaystyle \int_0 ^{2\pi}
\begin{bmatrix}
0 &1 \\
1 & 0
\end{bmatrix}
dE(e^{i\theta})=\int_0 ^{2 \pi} \rho _\theta(t) dE(e^{i \theta}).
\end{align*}
In fact, the above observation gives rise to an explicit expression of the regular representation of $D_\infty$ in terms of a direct integral of irreducible representations. Now we consider the linear pencil $A_{\lb}(z)=z_0I+z_1\lb(a)+z_2\lb(t)$, where $z~=~(z_0, z_1, z_2)~\in~{\mathbb C}^3$. The above observation yields
\begin{equation}\label{eq: atheta}
A_\lb(z)=\int_0^{2\pi} A_{\rho_\theta}(z)dE(e^{i\theta}).
\end{equation}
Hence, the pencil $A_{\lb}(z)$ is not invertible if and only if $A_{\rho_\theta}(z)$ is not invertible for at least one $\theta\in [0, 2\pi)$. Thus we obtain the following theorem which was originally proved in \cite[Theorem 1.1]{GY}.
\begin{thm}
\label{thm: PA}
Let $\lambda : D_\infty\to l^2(D_\infty)$ be the left regular representation. Then
\[
P(A_{\lb})=\bigcup_{0\leq \theta < 2\pi}\{z\in \C^3 \mid  z_0^2-z_1^2-z_2^2-2z_1z_2 \cos \theta=0\}.
\]
\end{thm}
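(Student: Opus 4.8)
The plan is to extract the conclusion from the direct integral identity (\ref{eq: atheta}). That identity realizes $A_\lb(z)$ as a direct integral, over the circle $\td$ with respect to the spectral measure $E$ of the bilateral shift $T$, of the $2\times 2$ matrix pencils $A_{\rho_\theta}(z)$. Granting the invertibility criterion already noted above --- that $A_\lb(z)$ is non-invertible exactly when $A_{\rho_\theta}(z)$ is non-invertible for at least one $\theta\in[0,2\pi)$ --- everything reduces to deciding, for each $\theta$, which $z\in\C^3$ make the $2\times 2$ matrix $A_{\rho_\theta}(z)$ singular.

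That step is a one-line computation. Substituting the matrices defining $\rho_\theta$ gives
\[
A_{\rho_\theta}(z)=z_0 I+z_1\rho_\theta(a)+z_2\rho_\theta(t)=\begin{bmatrix} z_0 & z_2+z_1 e^{i\theta}\\ z_2+z_1 e^{-i\theta} & z_0\end{bmatrix},
\]
so that
\[
\det A_{\rho_\theta}(z)=z_0^2-(z_2+z_1 e^{i\theta})(z_2+z_1 e^{-i\theta})=z_0^2-z_1^2-z_2^2-2z_1z_2\cos\theta .
\]
Hence $A_{\rho_\theta}(z)$ fails to be invertible if and only if $z$ lies on the quadric $\{z_0^2-z_1^2-z_2^2-2z_1z_2\cos\theta=0\}$, and forming the union over $\theta\in[0,2\pi)$ gives precisely the claimed formula for $P(A_\lb)$.

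The point that actually requires care --- and the one I expect to be the main obstacle --- is the invertibility criterion for the direct integral. By the computation preceding the statement, $\lb$ is unitarily equivalent to $\int_{\td}^{\oplus}\rho_\theta\, dE(e^{i\theta})$, where $E$ is the spectral measure of $T$; this measure is mutually absolutely continuous with normalized Lebesgue measure on $\td$ and has full support. For the resulting decomposable operator, $A_\lb(z)=\int_{\td}^{\oplus} A_{\rho_\theta}(z)\, dE$ is invertible if and only if the fiberwise inverses exist almost everywhere with $\theta\mapsto\|A_{\rho_\theta}(z)^{-1}\|$ essentially bounded. Since $\theta\mapsto A_{\rho_\theta}(z)$ is norm-continuous (indeed real-analytic) in $\theta$ with values in $2\times 2$ matrices, this essential boundedness is equivalent to genuine invertibility at \emph{every} $\theta$: if $A_{\rho_{\theta_0}}(z)$ were singular, then $\|A_{\rho_\theta}(z)^{-1}\|\to\infty$ as $\theta\to\theta_0$ while every neighborhood of $\theta_0$ carries positive $E$-mass, so no essential bound can exist; conversely, continuity on the compact circle furnishes a uniform bound. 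Thus the full support and absolute continuity of $E$ (equivalently, the Lebesgue spectrum of the bilateral shift) are exactly what make a single singular fiber enough to destroy invertibility of the whole integral, and this bookkeeping is the only non-routine ingredient. As a byproduct the argument reproves \cite[Theorem 1.1]{GY}.
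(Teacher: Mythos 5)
Your proposal is correct and follows essentially the same route as the paper: realize $A_\lambda(z)$ via the direct integral $\int_0^{2\pi} A_{\rho_\theta}(z)\,dE(e^{i\theta})$ over the spectral measure of the bilateral shift and compute $\det A_{\rho_\theta}(z)=z_0^2-z_1^2-z_2^2-2z_1z_2\cos\theta$ fiberwise. The only difference is that you spell out the justification (essential boundedness of the fiberwise inverses, full support of $E$, continuity in $\theta$) for the step the paper simply asserts, namely that non-invertibility of a single fiber $A_{\rho_\theta}(z)$ forces non-invertibility of $A_\lambda(z)$, and that detail is handled correctly.
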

In the projective space, ${\mathbb P}^2$ we have an equivalent expression
\[p(A_{\lb})=\bigcup_{0\leq \theta < 2\pi}\{z\in \mathbb{P}^2 \mid  z_0^2-z_1^2-z_2^2-2z_1z_2 \cos \theta=0\}.\]
Theorem \ref{thm: PA}, in particular its equivalent statement in ${\mathbb P}^2$, is fundamental to this paper. Observe that the theorem is equivalent to stating that if $z \in P(A_\lb)$ then 
\begin{equation}
L_x(z):=z_0^2-z_1^2-z_2^2-2z_1 z_2 x = 0 \text{ for some } x \in [-1,1].
\end{equation}
For simplicity, we denote the quadratic surface 
\begin{equation}
S_x = \{ z \in \mathbb{C}^3 \mid L_x(z) = 0\}.
\end{equation} 
Observe that the union in Theorem \ref{thm: PA} is not a disjoint union of quadratic surfaces. For instance, the point $(1, 1, 0)\in S_x$ for every $x\in [-1, 1]$. If $z \in P(A_\lb)$ and $z_1z_2 \neq 0$ then there is a unique $\hat{x} \in [-1,1]$ such that $z \in S_{\hat{x}}$, e.g., the function $\tau(z)=\frac{z_0^2-z_1^2-z_2^2}{2z_1z_2}$ is well-defined (in $ \C^3$ as well as in  ${\mathbb P}^2$). The function $\tau$ will be more rigorously defined later and it is instrumental to the study of spectral dynamics of $D_\infty$.

\section{Self-similar representation}

Consider a countable group $G$ and a unitary representation $\rho: G\to U({\mathcal H})$, where $U({\mathcal H})$ denotes the group of unitary operators on the Hilbert space ${\mathcal H}$. We shall call the representation self-similar, or more precisely $d$-similar, if there exists a natural number $d$ and a unitary map $W: \hh\to \hh^d$ such that for every $g\in G$ the $d\times d$ block matrix $X(g):=W\rho(g)W^*$ has all of its entries $x_{ij}$ either equal to $0$ or from $\rho(G)$. Observe that in this case, since $X(g)$ itself and each of its nonzero entry $x_{ij}$ are unitaries, every row or column of $X(g)$ has precisely one nonzero entry. For more details on self-similarity, we refer the readers to \cite{GNS, GY, Ne05}. Self-similar representations sometimes arise when the group $G$ acts on a rooted tree. Figure \ref{eq: tree} below shows the first three levels of an infinite binary tree. 

\begin{figure}[h!]
\begin{tikzpicture}[->,>=stealth',level/.style={sibling distance = 5cm/#1,
  level distance = 1.5cm}]
\node [arn_r] {$T$}
    child{ node [arn_r] {$T_0$}
            child{ node [arn_r] {$T_{00}$}
                    child{ node {\vdots}}
                    child{ node {\vdots}}}
            child{ node [arn_r] {$T_{01}$}
                    child{ node {\vdots}}
                    child{ node {\vdots}}}
           }
    child{ node [arn_r] {$T_1$}
            child{ node [arn_r] {$T_{10}$}
                     child{ node {\vdots}}
                     child{ node {\vdots}}}
            child{ node [arn_r] {$T_{11}$}
                      child{ node {\vdots}}
                      child{ node {\vdots}}}
	   }
;
\end{tikzpicture}
\caption{Rooted binary tree}
\label{eq: tree}
\end{figure}
Clearly, the tree $T$ consists of two subtrees $T_0$ and $T_1$, each of which also consists of two subtrees: $T_{00}$ and $T_{01}$, and $T_{10}$ and $T_{11}$, respectively, etc. The boundary $\partial T$ of the tree $T$ is the collection of all infinite sequences of directed arrows from the vertex $T$ down the tree. The uniform Bernoulli measure $\mu$ on $\partial T$ is defined by $\mu(\partial T_{i_1\cdots i_p})=\frac{1}{2^p}$, where $i_k\in \{0, 1\}$ for each $k$. In other words, the measure $\mu$ distributes evenly on the subtrees  at every level. Since every element in $\partial T$ corresponds to an infinite sequence of directed arrows in $T$, it thus corresponds to a unique sequence of $0$s and $1$s. Hence there is a natural bijection from $\partial T$ to the interval $[0, 1]$ (expressed in binary numbers). And this bijection also identifies the measure $\mu$ with the Lebesgue measure on $[0, 1]$.

Define the Hilbert space $\hh=L^2(\partial T,\mu)$. Let $\mu_i=2\mu,\ i=0, 1$ be the normalized restrictions of $\mu$ on the boundary of the subtrees $ \partial T_0,\ \partial T_1$, and define $\hh_i=L^2(\partial T_i,\ \mu_i),$ $i=0, 1$. Then each $\hh_i$ can be identified with $\hh$ and hence $\hh=\hh_0\oplus \hh_1$ can be identified with $\hh\oplus \hh$ by a unitary $W$. 

If a locally compact group $G$ has a measure-preserving action on a measure space $(X, \mu)$, then the Koopman representation $\pi: G\to U\big(L^2(X, \mu)\big)$ is defined by 
\[\pi(g)f(x)=f(g^{-1}x),\ \ \forall x\in X,\ \forall g\in G.\]
Clearly, if $X=G$ and $\mu$ is the Haar measure, then in this case the Koopman representation is the left regular representation of $G$. This paper is primarily concerned with the Koopman representation of two groups: the infinite dihedral group and the Grigorchuk group of intermediate growth, both of which have a measure preserving action on the rooted binary tree $T$.

\subsection{Weak containment and projective spectrum}
\label{sec: WeakContainment}
\deff Consider two unitary representations $\pi$ and $\rho$ of a discrete group $G$ in Hilbert spaces ${\mathcal H}$ and ${\mathcal K}$, respectively. One says that $\pi$ is weakly contained in $\rho$ (denoted by $\pi\prec \rho$) if for every $x\in {\mathcal H}$, every finite subset $F\subset G$, and every $\epsilon>0$ there exist $y_1,\ y_2,\ \cdots,\ y_n$ in ${\mathcal K}$ such that for all $g\in F$
\[\big|\langle \pi(g)x,\ x\rangle-\sum_{i=1}^{n}\langle \rho(g)y_i,\ y_i\rangle \big|<\epsilon.\]

Weak containment plays an important role in the representation theory of countable groups. Several equivalent conditions are discussed in \cite{BHV, Di}. Two unitary representations $\pi$ and $\rho$ are said to be weakly equivalent if $\pi\prec \rho$ and $\rho\prec \pi$. It was recognized in \cite{GY} that projective spectrum is invariant with respect to the weak equivalence of representations. 

\begin{prop}\label{prop: weak}
If $\pi $ and $\rho$ are two weakly equivalent unitary representations of the group $G=\langle g_1, g_2, ..., g_n \rangle$, then $P(A_\pi)=P(A_\rho)$.
\end{prop}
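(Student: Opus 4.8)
The plan is to show that weak containment of representations implies inclusion of the projective resolvent sets; applying this in both directions then yields equality of the projective spectra. Concretely, since $\pi \prec \rho$ and $\rho \prec \pi$, it suffices to prove the single implication: if $\pi \prec \rho$, then $P^c(A_\rho) \subseteq P^c(A_\pi)$, i.e. whenever $A_\rho(z)$ is invertible, so is $A_\pi(z)$. By symmetry the reverse inclusion follows from $\rho \prec \pi$, and intersecting gives $P(A_\pi) = P(A_\rho)$.

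\medskip

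\textbf{Key steps.} First, I would recall the standard reformulation of weak containment in terms of operator norms: $\pi \prec \rho$ is equivalent to the statement that for every finitely supported function $c: G \to \mathbb{C}$ one has $\|\pi(c)\| \leq \|\rho(c)\|$, where $\pi(c) = \sum_{g} c(g)\pi(g)$ (this is one of the equivalent conditions in \cite{BHV, Di}, via the fact that weak containment corresponds to a surjection of the relevant $C^*$-algebras). Next, fix $z = (z_0, z_1, \ldots, z_n) \in P^c(A_\rho)$, so $A_\rho(z) = z_0 I + z_1\rho(g_1) + \cdots + z_n\rho(g_n)$ is invertible in $B(\mathcal{K})$. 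The element $A_\rho(z)$ lies in the image of $\rho$ applied to the finitely supported function $c = z_0\delta_e + z_1\delta_{g_1} + \cdots + z_n\delta_{g_n}$. Since $A_\rho(z)$ is invertible, its inverse lies in the $C^*$-algebra generated by $\rho(G)$; by spectral permanence I may approximate $(A_\rho(z))^{-1}$ in norm by polynomials $p_k$ in the $\rho(g_i)$ and their adjoints (equivalently, by elements $\rho(c_k)$ for finitely supported $c_k$), so that $\|A_\rho(z)\,\rho(c_k) - I\| \to 0$ and $\|\rho(c_k)\,A_\rho(z) - I\| \to 0$. Now apply the norm inequality $\|\pi(d)\| \leq \|\rho(d)\|$ to the finitely supported functions $d_k = c \ast c_k - \delta_e$ and $d_k' = c_k \ast c - \delta_e$: since $\pi(c \ast c_k) = A_\pi(z)\,\pi(c_k)$ and $\pi(\delta_e) = I$, we get $\|A_\pi(z)\,\pi(c_k) - I\| \leq \|A_\rho(z)\,\rho(c_k) - I\| \to 0$, and similarly on the other side. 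Hence for $k$ large, $A_\pi(z)\,\pi(c_k)$ and $\pi(c_k)\,A_\pi(z)$ are both invertible (being close to $I$), which forces $A_\pi(z)$ to be invertible. Therefore $z \in P^c(A_\pi)$, establishing $P^c(A_\rho) \subseteq P^c(A_\pi)$.

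\medskip

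\textbf{Main obstacle.} The delicate point is the passage from the \emph{positive-definite-function} formulation of weak containment (as stated in the Definition just above) to the \emph{$C^*$-norm} formulation $\|\pi(c)\| \le \|\rho(c)\|$; this equivalence is classical but requires care, since one must handle the identification of $\|\pi(c)\|$ with a supremum of expressions $|\langle \pi(c)x, x\rangle|$ over unit vectors and then polarize/approximate to match the summed inner-product form $\sum_i \langle \rho(g)y_i, y_i\rangle$ in the Definition. Once that norm inequality is in hand, the rest is a routine Neumann-series argument using spectral permanence in $C^*$-algebras. Since Proposition \ref{prop: weak} is quoted from \cite{GY}, I would either cite the proof there or include the short argument above, noting that the norm inequality can also be taken as one of the listed equivalent conditions for weak containment in \cite{BHV, Di}.
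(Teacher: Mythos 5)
Your argument is correct and is essentially the same as the one the paper relies on: the proposition is quoted from \cite{GY} without proof, and the ``simple proof'' alluded to there rests on exactly the two ingredients you use, namely the $C^*$-norm characterization of weak containment ($\|\pi(c)\|\le\|\rho(c)\|$ for finitely supported $c$, as in \cite{BHV,Di}) and inverse-closedness (spectral permanence) of the unital $C^*$-algebra generated by the image of the representation. Your Neumann-series approximation merely spells out, by hand, what one gets by invoking the induced $*$-homomorphism $C^*_\rho(G)\to C^*_\pi(G)$ directly, so the substance coincides.
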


In particular, the following theorem was proved in the case of $D_\infty$  \cite[Theorem 7.2]{GY}. 

\begin{thm}\label{thm: equ}
Let $\lambda$ be the left regular representation of $D_\infty$ and $\pi$ be the Koopman representation of $D_\infty$ on the binary tree $T$. Then $\lambda$ and $\pi$ are weakly equivalent. In particular, we have $P(A_\lb)=P(A_\pi)$.
\end{thm}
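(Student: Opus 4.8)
The goal is to prove that $\pi$ and $\lb$ are weakly equivalent, i.e. $\pi\prec\lb$ and $\lb\prec\pi$; the equality $P(A_\lb)=P(A_\pi)$ then follows at once from Proposition~\ref{prop: weak} applied to $G=D_\infty$ with generating set $\{a,t\}$. One containment is soft: $D_\infty$ is amenable (it is virtually cyclic), so by Hulanicki's theorem \emph{every} unitary representation of $D_\infty$ -- in particular $\pi$ -- is weakly contained in $\lb$. So the work is to prove $\lb\prec\pi$.

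For $\lb\prec\pi$ I would realize the quasi-regular representations attached to the vertex stabilizers of $D_\infty$ as subrepresentations of $\pi$ and let them converge to the regular representation. Fix a strictly decreasing sequence of vertices $v_0\supset v_1\supset\cdots$ of $T$ (each $v_{n+1}$ a child of $v_n$) determining a boundary ray $\xi$, and set $P_n=\mathrm{Stab}_{D_\infty}(v_n)$; this is a decreasing chain of subgroups with $\bigcap_nP_n=\mathrm{Stab}_{D_\infty}(\xi)$. Since $D_\infty$ acts transitively on each level of $T$, the closed subspace $V_n\subset L^2(\partial T,\mu)=\mathcal H_\pi$ spanned by the cylinder indicators $\{\chi_{\partial T_w}:|w|=n\}$ is $\pi(D_\infty)$-invariant, and the orthonormal basis $\{2^{n/2}\chi_{\partial T_w}:|w|=n\}$ of $V_n$ identifies $\pi|_{V_n}$ $D_\infty$-equivariantly with the quasi-regular representation $\rho_n:=\ell^2(D_\infty/P_n)$, the vertex $w=h\cdot v_n$ corresponding to the coset $hP_n$.

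The crucial point is that the ray $\xi$ can be chosen so that $\mathrm{Stab}_{D_\infty}(\xi)=\{e\}$, hence $\bigcap_nP_n=\{e\}$. Granting this, let $x=\sum_{h\in S}c_h\delta_h\in\ell^2(D_\infty)$ be finitely supported and let $F\subset D_\infty$ be a finite set (which we may assume contains $e$). Since the $P_n$ decrease to $\{e\}$ and $S^{-1}FS$ is finite, choose $n$ with $P_n\cap(S^{-1}FS)\subseteq\{e\}$, and put $y=\sum_{h\in S}c_h\,2^{n/2}\chi_{\partial T_{h\cdot v_n}}\in V_n\subset\mathcal H_\pi$. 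Carrying the computation over to $\rho_n$, and using that $(h')^{-1}gh\in S^{-1}FS$ for all $g\in F$ and $h,h'\in S$, one finds for every $g\in F$ that
\[
\langle\pi(g)y,y\rangle=\sum_{h,h'\in S}c_h\,\overline{c_{h'}}\,\bigl[(h')^{-1}gh\in P_n\bigr]=\sum_{h,h'\in S}c_h\,\overline{c_{h'}}\,\bigl[gh=h'\bigr]=\langle\lb(g)x,x\rangle .
\]
A routine density argument extends this to arbitrary $x\in\ell^2(D_\infty)$, and that is exactly $\lb\prec\pi$. Together with $\pi\prec\lb$ this gives the weak equivalence, and then $P(A_\lb)=P(A_\pi)$ by Proposition~\ref{prop: weak}.

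The one genuinely group-theoretic ingredient -- and the main obstacle -- is the fact granted above: one needs a concrete enough description of the action of $D_\infty$ on $\partial T$ to know that it is level-transitive and that some (indeed $\mu$-almost every) ray has trivial stabilizer. In the self-similar realization of $D_\infty$ (say $a$ the transposition of the two maximal subtrees and $t=(a,t)$), one checks that every non-identity element fixes at most one point of $\partial T$: the infinite-order elements $(at)^k$ act as a dyadic odometer and have no fixed points, while every involution is conjugate to $a$ or to $t$ and hence fixes at most one boundary ray. As $D_\infty$ is countable, the union of all these fixed-point sets is a countable subset of the uncountable space $\partial T$, so a ray $\xi$ avoiding it exists, and for such $\xi$ one has $\bigcap_nP_n=\{e\}$. (Alternatively one may bypass the parabolic subgroups entirely: essential freeness of the action together with a Rokhlin/marker argument produces, for each finite $F$, a set $B$ with $\mu(B)>0$ and $\mu(gB\cap B)=0$ for all $g\in F\setminus\{e\}$; then $f=\mu(B)^{-1/2}\chi_B$ satisfies $\langle\pi(g)f,f\rangle=\langle\lb(g)\delta_e,\delta_e\rangle$ on $F$, and cyclicity of $\delta_e$ in $\ell^2(D_\infty)$ finishes the argument. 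Either way, the real content is the richness of the $D_\infty$-action on $\partial T$.)
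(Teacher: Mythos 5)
Your argument is correct, but note that the paper itself does not prove this theorem: it simply quotes it from \cite[Theorem 7.2]{GY}, so what you have written is a genuinely self-contained substitute rather than a variant of an in-paper argument. Your two halves are both sound. The containment $\pi\prec\lambda$ via amenability of $D_\infty$ (virtually cyclic, hence amenable, and for amenable groups every unitary representation is weakly contained in the regular one) is the standard soft direction. For $\lambda\prec\pi$, your computation with $y=\sum_h c_h 2^{n/2}\chi_{\partial T_{h\cdot v_n}}$ is exact and only uses that $\pi(g)\chi_{\partial T_w}=\chi_{\partial T_{gw}}$, that distinct level-$n$ cylinders are disjoint of measure $2^{-n}$, and the choice of $n$ with $P_n\cap S^{-1}FS\subseteq\{e\}$; the identification with the quasi-regular representation $\ell^2(D_\infty/P_n)$ is not even strictly needed. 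The genuinely group-theoretic input is exactly where you located it: with $a=\sigma$, $t=(a,t)$ one checks that $a$ fixes no boundary ray, $t$ fixes only $111\cdots$, every reflection of $D_\infty$ is conjugate to $a$ or $t$ (two conjugacy classes), and nontrivial powers of $at$ have no fixed rays (for odd powers because the top level is swapped, for even powers by the recursion $(at)^2=(ta,at)$ and induction), so the union of fixed-point sets is countable and a ray $\xi$ with $\bigcap_n P_n=\mathrm{Stab}(\xi)=\{e\}$ exists. This is in the same spirit as the essential-freeness arguments in \cite{DG,GY} (Koopman versus quasi-regular versus regular representations for boundary actions of amenable self-similar groups), but you should make two small points explicit if you write it up: justify level-transitivity of the $D_\infty$-action if you invoke the identification $V_n\cong\ell^2(D_\infty/P_n)$ (the level-$1$ stabilizer contains $t=(a,t)$ and $ata=(t,a)$, so it projects onto $D_\infty$ in each coordinate, and induction applies), and spell out the routine density step passing from finitely supported $x$ to general $x\in\ell^2(D_\infty)$. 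The parenthetical Rokhlin-type alternative is plausible but sketchier; the main argument does not need it.
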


\subsection{The dihedral group and dynamical map}
\label{sec: Dinf}

The Koopman representation of $D_{\infty}$ on the tree $T$ is realized by the following automaton in Figure 2 
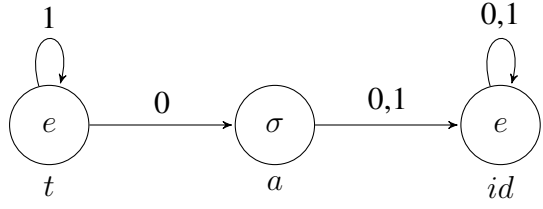
\begin{figure}[h]
\begin{tikzpicture}[>=stealth',shorten >=1pt,auto,node distance=3cm]
 \node[state,label=below:$t$] (S) {$e$};
\node[state,label=below:$a$]         (T) [right of=S] {$\sigma$};
\node[state,label=below:$id$]         (U) [right of=T] {$e$};
\path[->] (S)  edge [loop above] node {1} (S);
 \path[->]            (S) edge              node {0} (T);
\path[->] (T)       edge              node {0,1} (U);
\path[->] (U) edge [loop above] node {0,1} (U);
\end{tikzpicture}
\caption{Automaton of the group $D_\infty$}
\label{fig: Aut}
\end{figure}
where $a$ and $t$ are automorphisms of $T$ satisfying the recursive relation
\begin{align}
a=\sigma,\ \ \ \ t=(a,\ t).
\end{align}
Here $\sigma$ stands for the involution that exchanges the subtrees $T_0$ and $T_1$, and $(a, t)$ stands for the diagonal block matrix $a\oplus t$. This automaton indicates that the Koopman representation of $D_\infty$ on $\hh=L^2(\partial T, \mu)$ is $2$-similar (\cite{GY}), and the identification $W:\hh=\hh_0\oplus \hh_1\to \hh\oplus \hh$ mentioned earlier gives rise to the unitary equivalence
\[
W\pi (a)W^* = 
\begin{bmatrix}
0 & I \\
I & 0
\end{bmatrix}, \ \ \
W\pi (t)W^* =
\begin{bmatrix}
\pi (a)  & 0 \\
0 & \rho (t)
\end{bmatrix}.\]
For simplicity we shall use the symbol ``$\cong$" to denote the above unitary equivalence. Thus we have
\begin{equation}\label{block}
A_{\pi}(z) = z_0 +z_1 \pi (a) +z_2 \pi (t) \cong   \begin{bmatrix}
z_0+z_2 \pi(a) &	 z_1 \\
z_1  &	 z_0+z_2\pi(t)
\end{bmatrix}.
\end{equation}
Therefore $A_{\pi}(z)$ is invertible if and only if the $2\times 2$ block matrix on the right side of (\ref{block}) is invertible.
For convenience, here we shall write $\pi(a)$ and $\pi(t)$ simply as $a$ and $t$, respectively.
If $z_0^2 \neq z_2^2$, then $z_0+z_2a$ is invertible and its inverse is $(z_0-z_2 a)(z_0^2-z_2 ^2)^{-1}$. By a Schur complement argument, the block matrix $A_\pi(z)$ is invertible if and only if  $ z_0+z_2t-z_1 ^2(z_0-z_2a)(z_0^2-z_2^2)^{-1}$ is invertible, or if and only if the rational pencil
\[ \displaystyle \frac{z_0(z_0^2-z_1^2-z_2^2)}{z_0^2-z_2^2}+\frac{z_1 ^2z_2}{z_0^2-z_2^2}a +  z_2 t\] 
is invertible. This leads one to define the following polynomial map (\cite{GY}):
\begin{equation}
\label{eq: F}
F_1(z_0,z_1,z_2) = \bigg( z_0(z_0^2-z_1^2-z_2^2),  z_1^2 z_2,  z_2(z_0^2-z_2^2) \bigg). 
\end{equation}  

\Rem
The symmetry of $a$ and $t$ in $D_\infty$ implies the symmetry of $P(A_\pi)$ over $z_1$ and $z_2$. This fact enables one to define the symmetric map
\begin{equation}
\label{eq: F2}
F_2(z) = \big(z_0(z_0^2-z_1^2-z_2^2), z_1(z_0^2-z_1^2) , z_1 z_2^2 \big).
\end{equation}
Since the results will be parallel, we shall focus on $F_1(z)$ and simply write it as $F(z)$.
It was proved in \cite[Theorem 8.3]{GY} that the projective spectrum $P(A_\pi)$ is invariant under $F$. Thus, in view of Theorem \ref{thm: equ} we have
\begin{prop}\label{thm: Inv}
Let $\pi$ be the Koopman representation of $D_\infty$ on the rooted binary tree. Then the projective spectrum $P(A_\pi)$ is invariant under $F$.
\end{prop}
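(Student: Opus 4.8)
The plan is to verify directly that $F^{-1}\big(P(A_\pi)\big)\subseteq P(A_\pi)$, i.e. that the forward image $F\big(P(A_\pi)\big)$ lies in $P(A_\pi)$; combined with surjectivity considerations this is what ``invariant under $F$'' should mean here, and since the statement was already established as \cite[Theorem 8.3]{GY} the present Proposition is really just the translation of that fact through the weak-equivalence identification $P(A_\lb)=P(A_\pi)$ of Theorem \ref{thm: equ}. So the cleanest route is: first invoke Theorem \ref{thm: equ} to replace $P(A_\pi)$ by $P(A_\lb)$, then use the explicit description of $P(A_\lb)$ from Theorem \ref{thm: PA} as $\bigcup_{x\in[-1,1]} S_x$, and finally check that $F$ maps each $S_x$ into some $S_{x'}$ with $x'\in[-1,1]$.

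Concretely, I would take $z=(z_0,z_1,z_2)\in S_x$, so $L_x(z)=z_0^2-z_1^2-z_2^2-2z_1z_2x=0$, and compute $L_{x'}\big(F(z)\big)$ for the point $F(z)=\big(z_0(z_0^2-z_1^2-z_2^2),\,z_1^2z_2,\,z_2(z_0^2-z_2^2)\big)$. The motivation for $F$ recorded just above the Proposition — the Schur-complement reduction of the block pencil in (\ref{block}) — already tells us why this should work: $A_\pi(z)$ noninvertible forces, for the relevant irreducible summand $\rho_\theta$, the $2\times 2$ matrix $A_{\rho_\theta}(z)$ to be singular, and the reduction sends the defining equation $z_0^2-z_1^2-z_2^2-2z_1z_2\cos\theta=0$ to the analogous equation in the coordinates of $F(z)$ with a new angle $\theta'$. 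So the key computation is to exhibit, for each $z\in S_x$ with $z_1z_2\neq 0$, the value $\hat x=\tau(z)=\frac{z_0^2-z_1^2-z_2^2}{2z_1z_2}$ and show $\tau\big(F(z)\big)$ is again real and in $[-1,1]$; one expects a clean identity expressing $\tau(F(z))$ as a rational (in fact low-degree) function of $\tau(z)$, and indeed the abstract $\det(A(z))$ identity $z_0^2-z_1^2-z_2^2-2z_1z_2x = \prod(\ldots)$ factoring the quadratic should make $L_{x'}(F(z))$ vanish identically on $S_x$ once $x'$ is chosen correctly.

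After handling the generic stratum $z_1z_2\neq 0$, I would dispose of the degenerate loci $z_1=0$ or $z_2=0$ separately: on $\{z_1=0\}$ one has $F(z)=(z_0(z_0^2-z_2^2),0,z_2(z_0^2-z_2^2))$, and on $\{z_2=0\}$ one has $F(z)=(z_0(z_0^2-z_1^2),0,0)$, so in both cases one checks membership in $P(A_\lb)$ by hand using that $(w_0,0,w_2)\in P(A_\lb)$ iff $w_0^2=w_2^2$ and $(w_0,w_1,0)\in P(A_\lb)$ iff $w_0^2=w_1^2$ (the $x=\pm1$ surfaces), which again follows from Theorem \ref{thm: PA}. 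The point $z_0=0$ and the other coordinate hyperplanes intersected with $P(A_\lb)$ are finitely many points and can be checked individually. Finally I would note the projective statement follows from the homogeneity of $F$ (each component of $F$ is homogeneous of degree $3$), so $F$ descends to a well-defined map on $\mathbb{P}^2$ and the set-theoretic invariance passes to $p(A_\pi)$.

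The main obstacle I anticipate is purely bookkeeping rather than conceptual: controlling the behavior of $F$ on the indeterminacy locus where all three components of $F(z)$ vanish simultaneously (so $F$ is not defined at $z$ as a point of $\mathbb{P}^2$), and making sure the phrase ``invariant under $F$'' is interpreted so that these indeterminacy points do not break the statement. Since the Proposition as phrased is about $P(A_\pi)\subseteq\mathbb{C}^3$, I would state and prove it first at the level of $\mathbb{C}^3$ (where $F$ is everywhere defined and the only subtlety is the zero vector, which lies in $P(A_\pi)$ trivially), and then remark that on $\mathbb{P}^2$ the invariance holds on the complement of the (finite) indeterminacy set, deferring the careful treatment of that set — the ``extended indeterminacy set $E$'' of the Main Theorem — to the later sections where it is needed.
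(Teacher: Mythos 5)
Your proposal is sound and essentially reconstructs the mechanism behind the result, but note how the paper itself handles it: it gives no new proof, citing \cite[Theorem 8.3]{GY} and transferring the statement between $\lambda$ and $\pi$ via the weak equivalence $P(A_\lambda)=P(A_\pi)$ of Theorem \ref{thm: equ}; the identity you put at the center, $\tau(F(z))=2\tau(z)^2-1=T(\tau(z))$, is exactly what the paper records immediately after the proposition as ``a component of its proof'' in \cite{GY}. Your route turns that component into a self-contained argument: by Theorem \ref{thm: PA}, $P(A_\lambda)=\bigcup_{x\in[-1,1]}S_x$, one verifies the polynomial identity $L_{2x^2-1}(F(z))=0$ for all $z\in S_x$, and $T([-1,1])\subseteq[-1,1]$ then gives $F(P(A_\pi))\subseteq P(A_\pi)$; since that identity holds identically on $S_x$ (no division by $z_1z_2$ or $z_0^2-z_2^2$ is needed), it also absorbs the degenerate stratum $z_1z_2\neq 0$, $z_0^2=z_2^2$ (where the image has vanishing third coordinate) that your explicit case list skips, as well as the hyperplanes $z_1=0$ and $z_2=0$. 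Two slips to correct: $F^{-1}(P(A_\pi))\subseteq P(A_\pi)$ is not the same condition as $F(P(A_\pi))\subseteq P(A_\pi)$ --- the intended meaning of ``invariant'' here is the forward inclusion (compare the analogous statement $G(P(R_\pi))\subset P(R_\pi)$ for the Grigorchuk map), and that is what your computation actually establishes; and $\{z_0=0\}\cap P(A_\lambda)$ is not a finite set (in $\C^3$ it is a union of complex lines), though that case needs no separate treatment since the generic identity covers it. The alternative argument, which is what the derivation of $F$ in the paper and in \cite{GY} suggests, is the Schur-complement equivalence: for $z_0^2\neq z_2^2$, $A_\pi(z)$ is invertible if and only if $A_\pi(F(z))$ is, which yields the invariance directly from self-similarity without invoking the explicit description of the spectrum, whereas your version buys a fully explicit computational verification at the mild cost of using Theorems \ref{thm: PA} and \ref{thm: equ}.
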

This proposition is another important motivation for our investigation on the dynamics of $F$. We now examine a component of its proof in \cite{GY} more closely. Suppose $z\in \C^3$ and $z'=F(z)$. Suppose also that $z_1 z_2\neq 0$ and $z'_1 z'_2\neq 0$. Then
there exist unique $x$ and $x'$ in $\C$ such that \[z_0^2-z_1^2-z_2^2-2z_1z_2x=0\ \ \text{and}\ \ z_0'^2-z_1'^2-z_2'^2-2z_1'z_2'x'=0.\]
In particular, Proposition \ref{thm: Inv} shows that if $z\in P(A_\pi)$ then $z'\in P(A_\pi)$, and hence by Theorem \ref{thm: PA} both $x$ and $x'$ are in the interval $[-1, 1]$. To elucidate the connection between $x$ and $x'$, one computes that
\begin{align*}
x' = &\frac{z_0'^2-z_1'^2-z_2'^2}{2z_1'z_2'}\\
=& \frac{z_0^4+z_1^4+z_2^4-2z_0^2z_1^2-2z_0^2z_2^2}{2z_1^2z_2^2}\\
=& 2\bigg(\frac{z_0^2-z_1^2-z_2^2}{2z_1 z_2}\bigg)^2 -1=2x^2-1.
\end{align*}
The polynomial $T(x)=2x^2-1$ is the second Tchebyshev polynomial of the first kind. To further explore this connection, we use the Riemann sphere $\hat{\C}$ and define the function $\hat{\tau}: \C^3\to \hat{\C}$ by 
\begin{equation}
\label{eq: tauhat}
\hat{\tau}(z) = \begin{dcases}
      0 & \text{ if } z\in S_0=\{z \in \mathbb{C}^3 \mid z_0^2-z_1^2-z_2^2=0\}, \\
      \dfrac{z_0^2-z_1^2-z_2^2}{2z_1z_2} &  \text{ otherwise. }
    \end{dcases}
\end{equation}
By construction $\hat{\tau}$ is a well-defined map from ${\mathbb C}^3$ into $\hat{\C}$, and it is holomorphic on ${\mathbb C}^3\setminus S_0$. It may not be continuous at some points in $S_0$. For instance, for any $n\in {\mathbb N}$ one has $\hat{\tau} (1+\frac{1}{n}, 0, 1)=\infty$ but $\hat{\tau} (1, 0, 1)=0$ by definition. But this discontinuity will not affect later discussions. Equipped with the function $\hat{\tau}$, the relation between $x$ and $x'$ can be re-stated as 
\begin{equation}
\label{eq: Tche} 
\hat{\tau}(F(z))=T(\hat{\tau}(z)), \ \forall z\in \C^3.
\end{equation}
It is entertaining to check that Equation (\ref{eq: Tche}) also holds for the case $z_1z_2=0$. Equivalently one has the following commutative diagram
\begin{displaymath}
\label{diag: tauhat}
  \xymatrix{
    \mathbb{C}^3 \ar[r]^{F}  \ar[d]^{\hat{\tau}}
    &  \mathbb{C}^3 \ar[d]^{\hat{\tau}} \\
    {\hat{\mathbb{C}}} \ar[r]^{T} 
    & {\hat{\mathbb{C}}}.
  }
\end{displaymath}
This relationship, often called {\em semi-conjugacy}, betwen the map $F$ and the Tchebyshev polynomial $T$ is instrumental to our investigation of the dynamics of $F$. We will address more on this in Section \ref{sec: dynamics}.

\subsection{The Grigorchuk group and dynamical map}

The Grigorchuk group $\G$ is generated by four involutions $a, b, c, d$ with the following infinite set of algebraic equations:
\begin{align*}
 & a^2=b^2=c^2=d^2=bcd=1,\\
& \sigma^k((ad)^4)=\sigma^k((adacac)^4)=1,\ k=0,\ 1,\ 2,\ \cdots, 
\end{align*}
where $\sigma$ is the substitution:$\ a\to aca,\ b\to d,\ c\to b,\ d\to c$. It is the first example of group of intermediate growth (between polynomial growth and exponential growth) which settled a problem posed by J. Milnor (cf. \cite{Gr80}). The Koopman representation $\pi$ of ${\mathcal G}$ on the binary tree $T$ is realized by the following $5$-state automaton: 
\begin{figure}[h]
\begin{tikzpicture}[>=stealth',shorten >=1pt,auto,node distance=3cm]
  \node[state,label=left:$b$] (A) at ( -2,2) [shape=circle,draw]{$e$};
  \node[state,label=right:$d$] (B) at ( 2,2) [shape=circle,draw]{$e$};
  \node[state,label=below:$c$] (C) at ( 0,0) [shape=circle,draw]{$e$};
  \node[state,label=below:$a$] (D) at ( -2,-2) [shape=circle,draw] {$\sigma$};
  \node[state,label=below:$I$] (E) at (2,-2) [shape=circle,draw]{$e$};
\path[->]
        (B)   edge                    node {1}            (A)
        (A)   edge                    node {0}            (D)
        (A)   edge                    node {1}            (C)
        (C)   edge                    node {0}            (D)
        (C)   edge                    node {1}            (B)
        (D)   edge                    node {0,1}         (E)
        (B)   edge                    node {0}            (E)
        (E)   edge  [loop right]   node {0,1}            (E);
\end{tikzpicture}
\caption{Automaton of the group ${\mathcal G}$}
\end{figure}
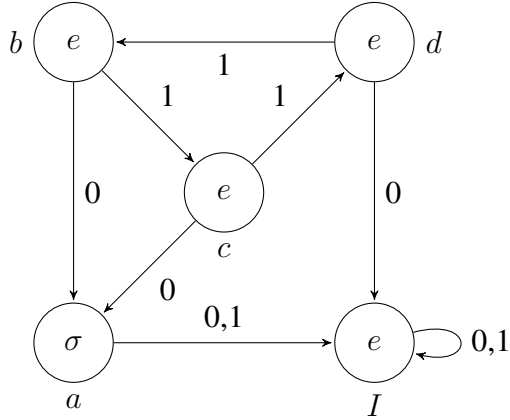

The invertibility of the linear pencil \[Q(x_1, x_2)=-x_1 \pi(a)+\pi(b)+\pi(c)+\pi(d)-(x_2+1)I,\] where $x_1, x_2\in \mathbb R$, has been studied (cf. \cite{BG,Gr84,Gr85}). Among other things, the following two important discoveries were made.
\begin{enumerate}
\item The Koopman representation $\pi$ of ${\mathcal G}$ is $2$-similar in the sense that there is a unitary map $W: {\mathcal H}\to {\mathcal H}^2$ such that  \begin{align*}
W\pi(a)W^*&=\left(\begin{matrix}
                      0 & I\\
                      I & 0
                     \end{matrix}\right),\hspace{15mm} W\pi(b)W^*=\left(\begin{matrix}
                                                                      \pi(a) & 0\\
                                                                      0 & \pi(c)
                                                                     \end{matrix}\right),\\
W\pi(c)W^*&=\left(\begin{matrix}
                      \pi(a) & 0\\
                      0 & \pi(d)
                     \end{matrix}\right),\ \ \ W\pi(d)W^*=\left(\begin{matrix}
                                                                      I & 0\\
                                                                      0 & \pi(b)
                                                                     \end{matrix}\right).
\end{align*}

\item The above self-similarity is reflected by a real rational map on the spectral set \[P(Q)=\{(x_1,x_2)\in \mathbb R^2 \mid Q(x_1,x_2)\ \text{is not invertible}\}.\]
\end{enumerate}
Here, in place of the real pencil $Q(x_1,x_2)$ we shall consider the homogeneous complex pencil
\[R_\pi(z) = z_0I +z_1\pi(a)+z_2\pi(b)+z_3\pi(c)+z_4\pi(d),\ \ z\in \C^5\] and the associated projective spectrum $P(R_\pi)$. Again, for convenience we shall write $\pi(a)$ simply as $a$, etc., in the sequel. Then the above $2$-similarity implies
\begin{equation}
R_\pi(z)\cong \begin{bmatrix}
z_0+z_2a+z_3a+z_4I & z_1 \\
z_1 & z_0+z_2c+z_3d+z_4b
\end{bmatrix}. 
\label{eq: block}
\end{equation}
If $(z_0+z_4)^2 \neq (z_2+z_3)^2$, then $(z_0+z_4)+(z_2+z_3)a$ is invertible, and its inverse is
\[\frac{(z_0+z_4)-(z_2+z_3)a}{(z_0+z_4)^2-(z_2+z_3)^2}, \] and hence by a Schur complement argument $R_\pi(z)$ is invertible if and only if: 
\begin{equation}
\label{eq: G schur's}
z_0+z_2c+z_3d+z_4b-z_1^2[z_0+z_4 +(z_2+z_3)a]^{-1}
\end{equation}
 is invertible. 
Re-writing (\ref{eq: G schur's}) as
\[ z_0+z_2c +z_3d+z_4b - \frac{z_1^2}{(z_0+z_4)^2 - (z_2+z_3)^2}\big(z_0+z_4-(z_2+z_3)a\big)\]
and then re-grouping the terms by the generators, one discovers that $R_\pi(z)$ is invertible if and only if the rational pencil
\begin{equation}
 \big(z_0 - \frac{z_1^2(z_0+z_4)}{(z_0+z_4)^2 - (z_2+z_3)^2}\big) I + \frac{z_1^2(z_2+z_3)}{(z_0+z_4)^2 - (z_2+z_3)^2} a +z_4b +z_2c+z_3d
\end{equation}
is invertible. Note, this fact was first observed for the case $z_2=z_3=z_4$ in \cite{BG}. 
 In order to obtain a polynomial map we multiply the above rational pencil by the function $\al(z) = (z_0+z_4)^2 - (z_2+z_3)^2$ and this led us to define the map $G: \C^5 \to \C^5$ as
\begin{equation}
\label{eq: G(z)}
 G(z) = \big(z_0 \al-z_1^2(z_0+z_4), z_1^2 (z_2+z_3), z_4 \al, z_2 \al, z_3 \al \big).
\end{equation}

 We begin by demonstrating two initial observations about the function $\alpha$'s zero set  $J=\{z\in{\C^5} | \al(z)=0\}$. 
\begin{prop}\label{thm: J}
(a) $G(J)\subset P(R_\pi); \ \ (b)\ G^{\circ 2}(J)=\{{\bf 0}=(0,0,0,0,0)\}$.
\end{prop}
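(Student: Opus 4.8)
The plan is to prove the two statements by direct computation with the explicit formula for $G$ and the description of $\alpha$'s zero set, exploiting the semi-conjugacy of $G$ (which should be the analogue for $\G$ of Equation (\ref{eq: Tche}) for $F$) only as a guide, since the actual work is algebraic. For part (a), take $z\in J$, so $\alpha(z) = (z_0+z_4)^2-(z_2+z_3)^2 = 0$. Then $G(z) = \big(-z_1^2(z_0+z_4),\, z_1^2(z_2+z_3),\, 0,\, 0,\, 0\big)$. I would then write the pencil $R_\pi(G(z))$ and show it is not invertible. Two cases arise: if $z_1(z_0+z_4) = 0$ then either $z_1 = 0$, giving $G(z) = \mathbf 0$ and $R_\pi(\mathbf 0) = 0$ is certainly non-invertible, or $z_0+z_4 = 0$, whence $z_2+z_3 = 0$ too (as $\alpha(z)=0$) and again $G(z)=\mathbf 0$; so we may assume $z_1(z_0+z_4)\neq 0$. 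In that generic case $G(z) = w$ with $w_0 = -z_1^2(z_0+z_4)$, $w_1 = z_1^2(z_2+z_3)$, $w_2=w_3=w_4=0$, and $R_\pi(w) = w_0 I + w_1 a$. This fails to be invertible exactly when $w_0^2 = w_1^2$, i.e. $z_1^4(z_0+z_4)^2 = z_1^4(z_2+z_3)^2$, which holds precisely because $\alpha(z)=0$. So $G(z)\in P(R_\pi)$ in every case.

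For part (b), I first analyze $G(J)$ more precisely. From the computation above, every point of $G(J)$ has last three coordinates zero and is of the form $w = (-z_1^2(z_0+z_4),\, z_1^2(z_2+z_3),\, 0,0,0)$ with $(z_0+z_4)^2 = (z_2+z_3)^2$, hence $w_0^2 = w_1^2$; that is, $G(J) \subseteq \{w : w_2=w_3=w_4=0,\ w_0^2 = w_1^2\}$. Now I apply $G$ once more to such a $w$. With $w_2=w_3=w_4=0$ we get $\alpha(w) = w_0^2 - 0 = w_0^2$, and
\begin{equation*}
G(w) = \big(w_0\cdot w_0^2 - w_1^2 w_0,\ w_1^2\cdot 0,\ 0,\ 0,\ 0\big) = \big(w_0(w_0^2 - w_1^2),\,0,0,0,0\big).
\end{equation*}
Since $w_0^2 = w_1^2$, the first coordinate is $w_0\cdot 0 = 0$, so $G(w) = \mathbf 0$. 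This proves $G^{\circ 2}(J)\subseteq\{\mathbf 0\}$. For the reverse inclusion one just notes $J$ is nonempty (e.g. any $z$ with $z_0+z_4 = z_2+z_3$, say $z=(1,0,1,0,0)$, lies in $J$), so $G^{\circ 2}(J)$ is nonempty and hence equals $\{\mathbf 0\}$; alternatively, observe $\mathbf 0\in J$ itself and $G(\mathbf 0) = \mathbf 0$.

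I do not anticipate a serious obstacle here — the proposition is essentially a bookkeeping exercise once the formula (\ref{eq: G(z)}) is in hand. The one point requiring a little care is the case division in part (a): one must not divide by $z_1$ or by $z_0+z_4$ without first disposing of the degenerate subcases, and one should check that when $z_1(z_0+z_4)=0$ the image is genuinely $\mathbf 0$ (using $\alpha(z)=0$ to force $z_2+z_3=0$ when $z_0+z_4=0$) so that it still lands in $P(R_\pi)$ vacuously. A secondary subtlety worth a sentence is that $P(R_\pi)$ is taken in $\C^5$, so the zero vector belongs to it by convention ($R_\pi(\mathbf 0)=0$ is not invertible); if one prefers the projective formulation one instead remarks that $G(J)$, after removing the points mapping to $\mathbf 0$, still consists of non-invertible points, and the claim $G^{\circ 2}(J) = \{\mathbf 0\}$ is naturally a statement about the cone $P(R_\pi)\subset\C^5$ rather than about $p(R_\pi)\subset\mathbb P^4$. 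I would state the proposition's proof at the level of $\C^5$ throughout to avoid this friction.
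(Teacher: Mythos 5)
Your proposal is correct and follows essentially the same route as the paper: for (a) you use $\alpha(z)=0$ to see that $G(z)$ lies on the line spanned by $(\pm 1,1,0,0,0)$ and conclude non-invertibility from $\sigma(\pi(a))=\{\pm 1\}$, exactly as the paper does, and your explicit handling of the degenerate cases $z_1(z_0+z_4)=0$ is a harmless refinement. For (b) the paper simply invokes homogeneity of $G$ together with $G(\pm 1,1,0,0,0)=\mathbf 0$, whereas you recompute $G$ on the image directly; the two computations are interchangeable, so there is no substantive difference.
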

\begin{proof}
First, for part (a) since the group element $a$ is an involution, we have the classical spectrum $\sigma(\pi(a))=\{\pm 1\}$. For each $z\in J$ we have $(z_0+z_4)=\pm (z_2+z_3)$, and hence in this case $G(z)$ can be written as $z_1^2(z_2+z_3)\big(\pm 1, 1, 0, 0, 0\big)$ which is in $P(R_\pi)$.
Moreover, since $G$ is homogeneous and $G(\pm 1, 1, 0, 0, 0)={\bf 0}$, part (b) follows.
\end{proof}

The above observations lead to the following lemma which motivated our study of the dynamics of $G$.
\begin{prop}
Consider the Grigorchuk group $\mathcal G$ and its Koopman representation $\pi$ on the rooted binary tree. For the map $G:\C^5\to \C^5$ defined in (\ref{eq: G(z)}), we have 

(a) $G(P(R_\pi))\subset P(R_\pi)$;

(b) $G(P^c(R_\pi)\setminus J)\subset P^c(R_\pi)$.
\end{prop}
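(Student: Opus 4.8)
The plan is to translate the Schur-complement reduction performed just above into a statement that directly relates the invertibility of $R_\pi(z)$ to that of $R_\pi(G(z))$, and then deduce both parts. Recall that for $z\notin J$ (equivalently $\al(z)\neq 0$) the computation preceding the statement shows that $R_\pi(z)$ is invertible if and only if the rational pencil
\[\Big(z_0-\frac{z_1^2(z_0+z_4)}{\al(z)}\Big)I+\frac{z_1^2(z_2+z_3)}{\al(z)}\,a+z_4b+z_2c+z_3d\]
is invertible. The first step is to observe that $\al(z)$ is a nonzero \emph{scalar} when $z\notin J$, so multiplying this pencil by $\al(z)$ preserves invertibility, and the product is \emph{exactly} $R_\pi(G(z))$: substituting $G(z)=\big(z_0\al(z)-z_1^2(z_0+z_4),\,z_1^2(z_2+z_3),\,z_4\al(z),\,z_2\al(z),\,z_3\al(z)\big)$ from (\ref{eq: G(z)}) into $R_\pi$ reproduces the rescaled pencil term by term. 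Together with the reduction above and the fact that the unitary equivalence ``$\cong$'' of (\ref{eq: block}) preserves invertibility, this yields the key dichotomy:
\[z\notin J\ \Longrightarrow\ \big(R_\pi(z)\text{ invertible}\iff R_\pi(G(z))\text{ invertible}\big).\]

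Part (b) is then immediate: if $z\in P^c(R_\pi)\setminus J$ then $R_\pi(z)$ is invertible and $z\notin J$, hence $R_\pi(G(z))$ is invertible, i.e.\ $G(z)\in P^c(R_\pi)$. For part (a) I would split $P(R_\pi)=\big(P(R_\pi)\setminus J\big)\cup\big(P(R_\pi)\cap J\big)$. On the first piece the dichotomy applies in the other direction: $z\in P(R_\pi)\setminus J$ means $R_\pi(z)$ is not invertible and $z\notin J$, so $R_\pi(G(z))$ is not invertible, i.e.\ $G(z)\in P(R_\pi)$. On the second piece $z\in J$, and Proposition~\ref{thm: J}(a) already gives $G(J)\subset P(R_\pi)$, so $G(z)\in P(R_\pi)$ there as well; the two pieces exhaust $P(R_\pi)$.

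I do not anticipate a genuine obstacle: the analytic content --- the Schur-complement reduction and the behaviour of $G$ on $J$ --- is already established, and what remains is bookkeeping. The two points that warrant care are (i) checking that the $\al$-rescaled rational pencil equals $R_\pi(G(z))$ on the nose, so that equivalence of invertibility transfers exactly, and (ii) treating the degenerate locus $z\in J$ separately in part (a), which is precisely where Proposition~\ref{thm: J}(a) enters. Point (ii) also explains why $J$ must be removed in part (b): for $z\in J$ one may have $z\in P^c(R_\pi)$ while $G(z)\in G(J)\subset P(R_\pi)$, so the resolvent set by itself need not be $G$-invariant.
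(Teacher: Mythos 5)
Your proposal is correct and follows essentially the same route as the paper: the Schur complement reduction for $z\notin J$ (so that $R_\pi(z)$ is invertible iff the $\al$-rescaled pencil, which is exactly $R_\pi(G(z))$, is invertible), combined with Proposition~\ref{thm: J} to handle the locus $J$, where the paper's part~(a), $G(J)\subset P(R_\pi)$, is indeed the relevant ingredient for points of $P(R_\pi)\cap J$. Your write-up merely spells out the case split more explicitly than the paper does; there is no gap.
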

\begin{proof}
 If $z\notin J$, then by the self-similarity of the representation $\pi$ the multiparameter pencil $R_{\pi}(z)$ is invertible if and only if the block matrix (\ref{eq: block}) is invertible. Then by a Schur complement argument it is invertible if and only if the pencil (3.9) is invertible. The proposition thus follows from Proposition \ref{thm: J} (b).
\end{proof}

\section{Some fundamentals of complex dynamics}
\label{sec: dynamics}
For a function $R: \hat{\C}\to \hat{\C}$, we shall use the notation $R^{\circ n}$ to denote the $n$-th iteration of $R$. For instance, the notation $R^{\circ 3}(z)$ stands for $R(R(R(z)))$. Complex dynamics studies various issues concerning the convergence of the sequence $\{R^{\circ n}\}_{n=1}^{\infty}$. The readers shall find more information on this subject in \cite{Be, JM}. Here we only mention some definitions and facts pertaining to our study.

\deff Given a non-constant rational function $R: \hat{\C}\to \hat{\C}$. Its {\em Fatou set} ${\mathcal F}(R)$ is the maximal open subset of $\hat{\C}$ on which the sequence $\{R^{\circ n}\}_{n=1}^{\infty}$ is
equicontinuous. The Julia set ${\mathcal J}(R)$ is the complement $\hat{\C}\setminus {\mathcal F}(R)$.\\

It is known that there exists rational maps for which the Fatou set is empty (\cite{La}). For the Tchebyshev polynomial mentioned earlier, the following theorem is known (\cite{Be}). 

\begin{thm}
\label{thm: Tch}
Consider the map $T(x)=2x^2-1$ on $\hat{\C}$. Then its Julia set ${\mathcal J}(T)=[-1, 1]$. Further, the iteration sequence \{$T^{\circ n}\}$ converges normally to $\infty$ on the Fatou set $\hat{\C}\setminus [-1, 1]$.
\end{thm}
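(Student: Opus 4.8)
The plan is to realize $T$ as a quotient of the squaring map $S(w)=w^{2}$ through the Joukowski map $\psi(w)=\tfrac12\big(w+w^{-1}\big)$, and then transport the (classical) dynamics of $S$ across $\psi$. A direct computation gives the \emph{semi-conjugacy}
\[
T\big(\psi(w)\big)=2\Big(\tfrac{w+w^{-1}}{2}\Big)^{2}-1=\tfrac12\big(w^{2}+w^{-2}\big)=\psi\big(S(w)\big),
\]
valid for all $w\in\hat{\C}$, whence $T^{\circ n}\circ\psi=\psi\circ S^{\circ n}$ for every $n$.

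Next I would record the elementary mapping properties of $\psi$. As a rational map of degree two it satisfies $\psi(w)=\psi(1/w)$, it carries the unit circle two-to-one onto $[-1,1]$ (since $\psi(e^{i\varphi})=\cos\varphi$), and it restricts to a conformal isomorphism of the exterior disc $\Delta^{*}:=\{w\in\hat{\C}\mid |w|>1\}$ onto $\Omega:=\hat{\C}\setminus[-1,1]$. Indeed, $\psi(w)=\psi(v)$ forces $w=v$ or $wv=1$, and the latter is impossible when $|w|,|v|>1$, giving injectivity on $\Delta^{*}$; and for $x\notin[-1,1]$ the two roots of $w^{2}-2xw+1=0$ have product $1$ with neither on the unit circle, so exactly one lies in $\Delta^{*}$, giving surjectivity onto $\Omega$. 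Note also $\psi(\infty)=\infty$ and $\psi$ is continuous on all of $\hat{\C}$.

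Now I would invoke the classical dynamics of $S$: its Julia set is the unit circle, its Fatou set consists of the two super-attracting basins $\{|w|<1\}\cup\{0\}$ and $\Delta^{*}$, and on $\Delta^{*}$ one has $S^{\circ n}(w)=w^{2^{n}}\to\infty$ uniformly on compact subsets. Pushing this forward through the conformal isomorphism $\psi|_{\Delta^{*}}\colon\Delta^{*}\to\Omega$, and using $T^{\circ n}\circ\psi=\psi\circ S^{\circ n}$ together with continuity of $\psi$ at $\infty$, we get that $T^{\circ n}\to\infty$ uniformly on compact subsets of $\Omega=\hat{\C}\setminus[-1,1]$; in particular $\hat{\C}\setminus[-1,1]\subseteq\F(T)$, which is already the asserted convergence statement once the equality $\F(T)=\hat{\C}\setminus[-1,1]$ is in hand.

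It remains to see $[-1,1]\subseteq\J(T)$. The quickest route is the standard transfer principle for semi-conjugacies: a non-constant rational map intertwining two rational dynamical systems is open and pulls back Fatou set to Fatou set and Julia set to Julia set, so $\J(T)=\psi(\J(S))=\psi(\{|w|=1\})=[-1,1]$. If one prefers an elementary argument, suppose some $x_{0}\in(-1,1)$ lay in $\F(T)$; then a subsequence of $\{T^{\circ n}\}$ would converge locally uniformly on a neighborhood $U$ of $x_{0}$ to a holomorphic map or to $\infty$, but $T^{\circ n}\to\infty$ on $U\setminus[-1,1]$ while $T^{\circ n}$ stays in $[-1,1]$ on $U\cap[-1,1]$ (as $T([-1,1])=[-1,1]$), so no such limit exists, a contradiction; since $\J(T)$ is closed, $[-1,1]=\overline{(-1,1)}\subseteq\J(T)$. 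Either way, combining with the previous paragraph gives $\J(T)=[-1,1]$ and the normal convergence of $\{T^{\circ n}\}$ to $\infty$ on the complement. I do not foresee a genuine obstacle here; the only points needing care are the behaviour of $\psi$ at and near $\infty$ and, in the slick route, the precise statement and use of the semi-conjugacy transfer of Fatou and Julia sets.
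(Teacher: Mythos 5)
Your proof is correct. Note that the paper does not prove this statement at all: it is quoted as a known fact with a citation to Beardon's \emph{Iteration of Rational Functions}, so there is no internal argument to compare against. What you have written is essentially the classical argument behind that citation: the semi-conjugacy $T\circ\psi=\psi\circ S$ with $S(w)=w^{2}$ and $\psi(w)=\tfrac12(w+w^{-1})$, the conformal isomorphism $\psi\colon\{|w|>1\}\to\hat{\C}\setminus[-1,1]$, and the push-forward of $S^{\circ n}(w)=w^{2^{n}}\to\infty$. Your computations check out, and your elementary fallback for $[-1,1]\subseteq\J(T)$ (a subsequential normal limit on a neighborhood of $x_{0}\in(-1,1)$ would have to be $\equiv\infty$ off $[-1,1]$ yet bounded along the orbit of $x_{0}$, since $T([-1,1])=[-1,1]$) is complete and makes the argument self-contained, so you need not rely on the general transfer principle for semi-conjugacies, which as you note would require a careful statement. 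The only cosmetic quibble is the redundant ``$\cup\{0\}$'' in your description of the bounded Fatou component of $S$; it does not affect anything.
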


\subsection{Dynamics in ${\C}^{n+1}$}

For a point $z=(z_0, z_1, ..., z_{n})\in {\C}^{n+1}$ and any $p>0$, its $p$-norm $\|\cdot\|_p$ is defined by $\|z\|_p=(|z_0|^p+\cdots +|z_{n}|^p)^{1/p}.$
Now consider a polynomial map $H=(H_0, ..., H_n): {\C}^{n+1}\to {\C}^{n+1}$. Its Jacobian matrix is the $(n+1)\times (n+1)$ matrix \[H'(z):=\left(\frac{\partial H_i}{\partial z_j}\right)_{i,j=0}^n.\] 
A point $z\in {\C}^{n+1}$ is called a fixed point of $H$ if $H(z)=z$. Using the eigenvalues of the Jacobian matrix one can classify fixed points by their behavior such as \textit{attracting, repelling, parabolic}, or \textit{saddle points} (\cite{Forn}) which we will define later.  Suppose $z$ is an attracting fixed point for $H$, then the {\em basin of attraction} around $z$ is the maximal open domain $B_z\subset  {\C}^{n+1}$ containing $z$ such that for every $w\in B_z$ one has $H^{\circ n}(w)\to z$. 
Clearly, not every map $H$ has a fixed point or a basin of attraction. For example, consider the Tchebyshev polynomial $T(x)=2x^2-1$. Theorem \ref{thm: Tch} indicates that, if viewed as a map from $\C\to \C$ then $T$ has no basin of attraction; but if $T$ is regarded as a map from $\hat{\C}\to \hat{\C}$ then $\infty$ is a fixed point and $\hat{\C}\setminus [-1, 1]$ is the basin of attraction of $\infty$.

Regarding the maps $F$ and $G$, one has the following simple observation. 

\begin{prop}\label{eq: basin}
For the map $F: \C^3\to \C^3$ defined in (\ref{eq: F}) and the map $G:\C^5\to \C^5$ defined in (\ref{eq: G(z)}) we have
\begin{enumerate}[(a)]
\item $\|F(z)\|_2\leq \|z\|^3_2$; 

\item $\|G(z)\|_1\leq \|z\|^3_1$.
\end{enumerate}
\end{prop}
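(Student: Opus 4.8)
The plan is to prove both inequalities by direct term-by-term estimation, each one reducing to an elementary inequality among nonnegative real numbers.

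For part (a) I would square everything to avoid radicals and write
\[
\|F(z)\|_2^2 = |z_0|^2\,|z_0^2-z_1^2-z_2^2|^2 + |z_1|^4|z_2|^2 + |z_2|^2\,|z_0^2-z_2^2|^2.
\]
The estimate is driven by three observations: $|z_0^2-z_1^2-z_2^2|\le |z_0|^2+|z_1|^2+|z_2|^2 = \|z\|_2^2$ and $|z_0^2-z_2^2|\le \|z\|_2^2$, both by the triangle inequality; and $|z_1|^4|z_2|^2 = |z_1|^2\,(|z_1|^2|z_2|^2) \le |z_1|^2\|z\|_2^4$, since $|z_1|^2|z_2|^2\le \|z\|_2^4$ (each factor being at most $\|z\|_2^2$). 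Substituting these bounds gives $\|F(z)\|_2^2 \le (|z_0|^2+|z_1|^2+|z_2|^2)\|z\|_2^4 = \|z\|_2^6$, and taking square roots completes (a).

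For part (b) the shape of $G$ in (\ref{eq: G(z)}) suggests collecting the coordinates according to $u=|z_0|+|z_4|$, $v=|z_2|+|z_3|$, $w=|z_1|$, so that $\|z\|_1 = u+v+w$. Applying the triangle inequality to each coordinate of $G(z)$ — the one term that must be split is $z_0\alpha-z_1^2(z_0+z_4)$, whose $|z_0||\alpha|$ part joins the contributions of $z_2\alpha,z_3\alpha,z_4\alpha$ while its $|z_1|^2|z_0+z_4|$ part pairs with $|z_1|^2|z_2+z_3|$ from the second coordinate — one gets
\[
\|G(z)\|_1 \le \big(|z_0|+|z_2|+|z_3|+|z_4|\big)\,|\alpha(z)| + |z_1|^2\big(|z_0+z_4|+|z_2+z_3|\big) \le (u+v)\big(|\alpha(z)|+w^2\big).
\]
Since $|\alpha(z)| = |(z_0+z_4)^2-(z_2+z_3)^2| \le |z_0+z_4|^2+|z_2+z_3|^2 \le u^2+v^2$, part (b) reduces to the inequality $(u+v)(u^2+v^2+w^2)\le (u+v+w)^3$ for $u,v,w\ge 0$. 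Writing $s=u+v$ and using $u^2+v^2\le s^2$, the left-hand side is at most $s^3+sw^2$, which is dominated by $(s+w)^3 = s^3+3s^2w+3sw^2+w^3$.

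I do not expect a genuine obstacle here; the one place requiring some care is the bookkeeping in part (b), namely splitting the first coordinate of $G$ correctly and checking that, after the substitution into $u,v,w$, all cross terms line up so that the remaining step is the clean polynomial inequality above. Part (a) is immediate once the three elementary bounds are recorded.
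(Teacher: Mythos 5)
Your proof is correct and follows essentially the same route as the paper: both parts are handled by direct triangle-inequality estimation of each coordinate, with the only differences being minor bookkeeping (in (a) you distribute the factor $\|z\|_2^4$ across all three terms rather than grouping the last two with $|z_2|^2$, and in (b) you phrase the final step as the elementary inequality $(u+v)(u^2+v^2+w^2)\le (u+v+w)^3$ where the paper uses $|z_1|^2+|z_0+z_4|^2+|z_2+z_3|^2\le\|z\|_1^2$). Nothing further is needed.
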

\begin{proof}
 For (a), one verifies that
\begin{align*}
\|F(z)\|_2^2&=|z_0(z_0^2-z_1^2-z_2^2)|^2+|z_1^2z_2|^2+|z_2(z_0^2-z_2^2)|^2\\
&\leq |z_0|^2\|z\|_2^4+|z_2|^2(|z_1|^4+\big(|z_0|^2+|z_2|^2)^2\big)\\
&\leq (|z_0|^2+|z_2|^2)\|z\|_2^4\leq \|z\|_2^6.
\end{align*}

For (b), one checks that
\begin{align*}
\|G(z)\|_1&=|z_0\al-z_1^2(z_0+z_4)|+|z_1^2(z_2+z_3)|+|\al|(|z_2|+|z_3|+|z_4|)\\
&\leq |\al|((|z_0|+|z_2|+|z_3|+|z_4|)+|z_1|^2(|z_0+z_4|+|z_2+z_3|)\\
&\leq (|z_1|^2+|z_0+z_4|^2+|z_2+z_3|^2)\|z\|_1\leq \|z\|_1^3.
\end{align*}
\end{proof}
It is clear from Proposition \ref{eq: basin} that the unit balls $\{z\in \C^3 \mid \|z\|_2<1\}$ and $\{z\in \C^5 \mid \|z\|_1<1\}$ are inside the basins of attraction for the maps $F$ and $G$, respectively. However, it seems difficult to completely determine the basin of attraction for either $F$ or $G$.

Fixed points of a map $H$ are classified according to the moduli of the eigenvalues of its Jacobian matrix $H'(z)$ ( \cite{Forn, MNTU}). A fixed point $z$ is said to be

\begin{itemize}
\item {\it attracting} if the moduli of all the eigenvalues of $H'(z)$ are strictly less than 1.
\item {\it super-attracting} if all the eigenvalues are zero.
\item {\it repelling} if the moduli of all the eigenvalues of $H'(z)$ are strictly greater than 1.
\item {\it parabolic} if at least one eigenvalue has norm exactly one.
\item {\it a saddle point} if some eigenvalues are strictly less than $1$, and other eigenvalues are strictly greater than $1$. 
\end{itemize}

Determining the fixed points of $H$ and their types requires only direct computation. Hence we leave the proofs for the following results in the Appendix.

\begin{prop}\label{prop: Ffix}
Consider the infinite dihedral group $D_\infty$ and the associated map $F$ defined in (\ref{eq: F}).

(a) Regarded as a map from $\C^3\to \C^3$,  the points $(0,0,0), (0, \pm i, \mp i)$  and the points in the surface \[Y_F:=\{ z~\in~\mathbb{C}^3 \mid z_1 =0, z_0 ^2 -z_2 ^2 =1\}\] are fixed under $F$.

(b) The origin $(0, 0, 0)$ is the only attracting fixed point.

(c) Regarded as a map from $\mathbb{P}^2\to \mathbb{P}^2$, the set of fixed points of $F$ is 
\[\{z\in \mathbb{P}^2\mid z_1=0, z_0\neq \pm z_2\}\cup \{[0: 1: -1]\}.\]
\end{prop}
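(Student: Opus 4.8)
The three parts are proved by direct calculation, organized by case analysis on which of the coordinates $z_0,z_1,z_2$ vanish; since the statement has been relegated to the appendix, the plan below records only the structure. For part (a) I would write the equation $F(z)=z$, with $F$ as in (\ref{eq: F}), as the system
\[
z_0(z_0^2-z_1^2-z_2^2-1)=0,\qquad z_1(z_1z_2-1)=0,\qquad z_2(z_0^2-z_2^2-1)=0 .
\]
If $z_1=0$, the first and third equations reduce to $z_0(z_0^2-z_2^2-1)=0$ and $z_2(z_0^2-z_2^2-1)=0$, which hold precisely when $z_0^2-z_2^2=1$ (the surface $Y_F$) or when $z_0=z_2=0$ (the origin). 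If $z_1\neq 0$, the middle equation gives $z_1z_2=1$, so $z_2\neq 0$ and the third equation forces $z_0^2-z_2^2=1$; substituting this into the first equation forces $z_1^2=0$ unless $z_0=0$, hence $z_0=0$, so $z_2=\pm i$ and $z_1=\mp i$. Assembling the cases yields exactly $\{(0,0,0)\}\cup\{(0,\pm i,\mp i)\}\cup Y_F$.

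For part (b) I would compute the Jacobian $F'(z)$, whose rows are
\[
(3z_0^2-z_1^2-z_2^2,\ -2z_0z_1,\ -2z_0z_2),\quad (0,\ 2z_1z_2,\ z_1^2),\quad (2z_0z_2,\ 0,\ z_0^2-3z_2^2),
\]
and evaluate it at each fixed point from (a). At the origin $F'(0)$ is the zero matrix, so $(0,0,0)$ is super-attracting, hence attracting (consistent with Proposition \ref{eq: basin}). At a point of $Y_F$ the middle coordinate direction is an eigenvector with eigenvalue $0$, and the complementary $2\times 2$ block has trace $4(z_0^2-z_2^2)=4$ and determinant $3(z_0^2-z_2^2)^2=3$, so its eigenvalues are $1$ and $3$; in particular such a point is not attracting. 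At $(0,\pm i,\mp i)$ the Jacobian is upper triangular with diagonal entries $2,2,3$, so these are repelling. Hence the origin is the unique attracting fixed point.

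For part (c), over $\mathbb{P}^2$ the map $F$ is only a rational map, undefined on the common zero locus of its three components, which one checks directly to be $\{[0:1:0],[1:\pm 1:0],[1:0:\pm 1]\}$; a point outside this locus is fixed exactly when $F(z)=\lambda z$ for some $\lambda\neq 0$. Running the case analysis of part (a) with the right-hand sides replaced by $\lambda z_i$: if $z_1=0$ the first and third equations give $\lambda=z_0^2-z_2^2$, and $\lambda\neq 0$ is equivalent to $z_0\neq\pm z_2$, producing $\{z\in\mathbb{P}^2\mid z_1=0,\ z_0\neq\pm z_2\}$; if $z_1\neq 0$ then $\lambda=z_1z_2\neq 0$ and, as before, $z_0=0$ and $z_1=-z_2$, giving the single point $[0:1:-1]$. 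The union of these is the asserted set.

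The computations are elementary, so the real point requiring care is conceptual rather than technical: in part (c) one must distinguish genuine fixed points of the rational map $F$ from points of its indeterminacy set by imposing $\lambda\neq 0$, and it is precisely this condition that produces the exclusion $z_0\neq\pm z_2$ and, for example, keeps $[0:1:-1]$ while discarding $[0:1:1]$ (which maps to $[0:1:-1]$ rather than to itself). A secondary point of attention is in part (b): one wants to read off the eigenvalue patterns $\{2,2,3\}$ and $\{1,3\}$ from the triangular form and from the trace and determinant of the relevant $2\times 2$ block (using $z_0^2-z_2^2=1$ on $Y_F$) rather than expanding a full $3\times 3$ characteristic polynomial.
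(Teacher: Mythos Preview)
Your proof is correct and follows the same direct-computation strategy as the paper's appendix proof: solve $F(z)=z$ (resp.\ $F(z)=\lambda z$) by case analysis on the vanishing of coordinates, and classify fixed points via the eigenvalues of $F'(z)$.

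One point worth noting in part (b): your eigenvalue computation on $Y_F$ is in fact sharper than the paper's. The paper writes $F'(z)-\lambda I$ on $Y_F$ as an upper-triangular matrix (tacitly setting the $(3,1)$-entry $2z_0z_2$ to zero) and reads off eigenvalues $0,\ 2z_0^2+1,\ 1-2z_2^2$, concluding only that their sum is $4$ so one must exceed $1$ in modulus. Your approach---keeping the $2\times 2$ block
\[
\begin{pmatrix} 3z_0^2-z_2^2 & -2z_0z_2\\ 2z_0z_2 & z_0^2-3z_2^2 \end{pmatrix}
\]
and computing trace $=4$ and determinant $=3(z_0^2-z_2^2)^2=3$---gives the exact eigenvalues $\{1,3\}$, independent of the point on $Y_F$. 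Both arguments yield the same conclusion (not attracting), but yours avoids the slip and shows every point of $Y_F$ is parabolic. In part (c) the paper omits the details entirely (``the calculation is similar''), so your explicit case analysis and the remark distinguishing genuine fixed points from indeterminacy points add value.
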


The fixed points for the map $G$ defined in (\ref{eq: G(z)}) are more complicated. To conveniently describe them, we set 
 \[Y_1:=\{z \in \C^5 \mid z= (-\gamma \pm \sqrt{1+4 \gamma^2},0, \gamma, \gamma, \gamma), \gamma \neq 0\}.\]
Further, we let 
\[w_{\pm}(\zeta)= \frac{-4 \zeta -3 \pm i \sqrt{8\zeta +3}}{2 \zeta +10},\ \ \zeta\in \C,\] and define
\[Y_2:=\bigg\{ \left(\frac{\beta}{\beta^2(\alpha-1)-1}, \frac{-1}{\beta}, \alpha \beta, \alpha ^2 \beta, \beta \right), \alpha^3=1, \alpha\neq 1, \beta^2 =w_{\pm}(\alpha) \bigg\}.\]

\begin{prop}\label{prop: Gfix}
Consider the Grigorchuk group $\mathcal{G}$ and the associated map $G$ defined in (\ref{eq: G(z)}).

(a) Regarded as a map from $\C^5\to \C^5$, the set of fixed points of $G$ is 
\[Y_1\cup Y_2\cup \{{\bf 0}\}.\]

(b) The origin ${\bf 0}$ is the only attracting fixed point of $G$.

(c) Regarded as a map from $\mathbb{P}^4\to \mathbb{P}^4$, the set of fixed points of $G$ is
\[\phi(Y_1)\cup \phi(Y_2)\cup \{[-1: -2: 1: 1: 1]\},\]
where $\phi: \C^5\to {\mathbb P}^4$ is the cannonical projection.
\end{prop}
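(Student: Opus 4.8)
The plan is to solve the system $G(z)=\mu z$ once and for all: taking $\mu=1$ yields the affine fixed points of part (a), and for part (c) a genuine fixed point of the rational self-map of $\mathbb{P}^4$ induced by $G$ (that is, one lying outside the indeterminacy locus $\{z:G(z)={\bf 0}\}$) must satisfy $G(z)=\mu z$ for some $\mu\neq0$. Writing $\al=\al(z)$ as in (\ref{eq: G(z)}), the three middle equations $z_4\al=\mu z_2$, $z_2\al=\mu z_3$, $z_3\al=\mu z_4$ form a cyclic chain, and composing them gives $z_4\,\al^3=\mu^3 z_4$. The resulting dichotomy $z_4=0$ or $\al^3=\mu^3$ organizes the whole proof, since once $z_2,z_3,z_4$ are expressed through $z_0,z_1$ the remaining equations $z_1^2(z_2+z_3)=\mu z_1$ and $z_0\al-z_1^2(z_0+z_4)=\mu z_0$ become elementary.

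Carrying this out: the branch $z_4=0$ forces $z_2=z_3=0$, hence $z_1=0$ and $z_0(z_0^2-\mu)=0$, contributing ${\bf 0}$ and $(\pm1,0,0,0,0)$ (and, projectively, the single point $[1:0:0:0:0]$). In the branch $\al^3=\mu^3$, $z_4\neq0$, write $\al=\omega\mu$ with $\omega^3=1$. If $\omega=1$ then $z_2=z_3=z_4=:\gamma$, the relation $(z_0+\gamma)^2=\mu+4\gamma^2$ fixes $z_0$ up to sign, and $z_1(2\gamma z_1-\mu)=0$ splits: the branch $z_1=0$ reproduces $Y_1$, while the branch $z_1=\mu/(2\gamma)$ forces $z_0=-\gamma$ together with a compatibility relation that (for $\mu=1$) pins down the two points $\pm(\tfrac{i}{2},i,-\tfrac{i}{2},-\tfrac{i}{2},-\tfrac{i}{2})$, which coalesce in $\mathbb{P}^4$ to $[-1:-2:1:1:1]$. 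If $\omega\neq1$, put $\beta:=z_4$; then $z_2=\omega\beta$, $z_3=\omega^2\beta$, and $z_2+z_3=-\beta$ (as $1+\omega+\omega^2=0$), so $\al=z_0(z_0+2\beta)$; the $z_1$–equation gives $z_1=-\mu/\beta$, the $z_0$–equation solves $z_0$ as the rational function of $\beta$ appearing in the definition of $Y_2$, and the leftover identity $z_0(z_0+2\beta)=\omega\mu$ reduces to a single quadratic constraint on $\beta^2$ whose two roots one recognizes as $w_\pm(\omega)$. This is $Y_2$. Part (c) then follows by reading off all solutions up to overall scaling.

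Part (b) is immediate from homogeneity. Each component of $G$ is a homogeneous polynomial of degree $3$, so by Euler's identity $G'(z)\,z=3\,G(z)$ for every $z$; at a fixed point $z\neq{\bf 0}$ this says $G'(z)z=3z$, so $3$ is an eigenvalue of the Jacobian and $z$ is not attracting (it is in fact repelling or a saddle). At the origin every entry of $G'$, being a degree-$2$ homogeneous polynomial, vanishes, so $G'({\bf 0})=0$ and ${\bf 0}$ is super-attracting. Hence ${\bf 0}$ is the only attracting fixed point.

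The step I expect to be the real obstacle is the case $\omega\neq1$: after substituting $z_1=-\mu/\beta$ and the rational expression for $z_0$ into $z_0(z_0+2\beta)=\omega\mu$ and clearing denominators, one is left with a polynomial in $\beta^2$ that must be reshaped — using $\omega^3=1$ and $1+\omega+\omega^2=0$ repeatedly — into exactly the quadratic whose roots are $w_\pm(\omega)$; the algebra is long and unenlightening, which is why the detailed proof is relegated to the Appendix. A secondary point demanding care is the matching of the affine fixed-point set of part (a) with its image in $\mathbb{P}^4$ in part (c): one must track the scaling ambiguities and the degenerate parameter values ($\gamma=0$, $\beta=0$) so that no fixed point of the induced map is dropped or double-counted.
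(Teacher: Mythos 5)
For parts (a) and (c) your route is essentially the paper's own: its appendix likewise uses the cyclic chain $z_4\al=\mu z_2$, $z_2\al=\mu z_3$, $z_3\al=\mu z_4$ to force either a vanishing coordinate or $\al^3=\mu^3$, and then runs the same split into $\al=\mu$ versus $\al=\omega\mu$ with $\omega$ a primitive cube root of unity, ending with the same quadratic in $\beta^2$ whose roots are $w_\pm$ (the paper does $\mu=1$ first and repeats the steps with a general eigenvalue for the projective case, rather than your single pass with general $\mu$). What is genuinely different, and better, is your part (b): the paper classifies fixed points by computing characteristic polynomials of $G'$ along $Y_1$ and falls back on a Matlab computation for the eight points of $Y_2$, while your Euler-identity remark $G'(z)z=3G(z)=3z$ shows in one line that every nonzero fixed point has $3$ as an eigenvalue of the Jacobian, hence is not attracting, and $G'({\bf 0})=0$ gives super-attraction at the origin; this covers $Y_2$ (and everything else) uniformly. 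Only the parenthetical ``repelling or a saddle'' overreaches, since other eigenvalues could have modulus one, but that claim is not needed.

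Note, however, that your enumeration does not match the proposition as stated, and the discrepancy is in your favor. In the subcase $\al=1$, $z_1=1/(2\gamma)$ the appendix computes $\al=(2\gamma)^2-(2\gamma)^2=0$ and discards the case; but with $z_0=-\gamma$ and $z_4=\gamma$ one actually has $\al=-4\gamma^2$, so $\al=1$ holds for $\gamma=\pm\tfrac{i}{2}$, yielding precisely your two affine fixed points $\pm\big(\tfrac{i}{2},i,-\tfrac{i}{2},-\tfrac{i}{2},-\tfrac{i}{2}\big)$ (one checks directly that $G$ fixes them), which lie in neither $Y_1$ nor $Y_2$; their common projection is $[-1:-2:1:1:1]$, which is why part (c) is unaffected by this slip. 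Likewise $(\pm1,0,0,0,0)$ (found in the appendix but excluded from $Y_1$ by the requirement $\gamma\neq0$) and, projectively, $[1:0:0:0:0]$ (a genuine fixed point and not an indeterminacy point, since $G(1,0,0,0,0)=(1,0,0,0,0)$) are missing from (a) and (c) as stated. So what your argument establishes is a corrected version of the proposition rather than the verbatim statement. Two small gaps to close in your write-up: in the $\omega\neq1$ branch dispose of the subcase $z_1=0$ (it forces $z_0=0$ and then $\al=0$, contradicting $\al=\omega\mu\neq0$), and for (c) carry out the scaling bookkeeping you defer, e.g.\ checking that the $z_1=0$, $z_2=z_3=z_4$ solutions for arbitrary $\mu\neq0$ project exactly onto $\phi(Y_1)$.
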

We will verify in Section 6.1 that the point $[-1: -2: 1:1:1]$ is in $p(R_\pi)$ while the set $\phi(Y_1)$ is contained in $p^c(R_\pi)$. However, situation for the points in $Y_2$ is not clear at this point.

Proposition \ref{prop: Ffix} (b) and Proposition \ref{prop: Gfix} (b) make one wonder if the origin is the only attracting fixed point for other self-similar groups. 


\subsection{Dynamics in complex projective space}

Consider a polynomial map $H: {\mathbb P}^2\to {\mathbb P}^2$ defined by $H(z)=[A(z):B(z):C(z)]$,
where $A, B$, and $C$ are homogeneous polynomials in $[z_0: z_1: z_2]$ of the same degree $d\geq 2$. Be aware that this map $H$ is not well-defined at the common zeros of $A, B,$ and $C$ because ${\mathbb P}^2$ contains no origin. In the case there are only a finite number of such common zeros, the map $H$ is called a {\em rational map}. The possible presence of common zeros of $A, B,$ and $C$ adds quite a bit more complication to the study of dynamics on ${\mathbb P}^2$. 

\deff If $H= [A:B:C]$ where $A, B, C$ are homogenuous polynomials of the same degree $d\geq 2$ then an {\em indeterminacy point} $z\in {\mathbb P}^2$ is such that $A(z) =B(z)=C(z)=0$. The set of indeterminacy points of $H$ will be denoted as $I(H)$, or simply $I$ when there is no confusion.\\

In order to study the iterations $H^{\circ n}$ one must consider their indeterminacy points as well (\cite{FS2}).

\deff An orbit $\displaystyle \{p_n \}_{n=-k} ^0$ is said to be complete if
\begin{enumerate}
\item $H(p_n) = p_{n+1}$,

\item $p_0 \in I$,

\item $p_n \notin I$ \quad if $n < 0 $,

\item if $k$ is finite, $p_{-k} \notin H(\mathbb{P}^2 \setminus I)$.
\end{enumerate}
We call $k+1$ the length of the orbit.

\deff A point $p \in \mathbb{P}^2$ is a point of indeterminacy for $H^{\circ n}$ if
\[\{ p, H(p), \ldots ,H^{\circ ( k-1)} (p) \}\] is a right tail of some complete orbit for some $1 \leq k \leq n$.
The set of such points is denoted by $I_n$.\\

The following fact is not hard to see.
\begin{cor}
If $I_n$ denotes the indeterminacy set of $H^{\circ n}$, then $I_n \subset I_m$, $\forall m > n$. 
\end{cor}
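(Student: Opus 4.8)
The plan is to unwind the definition of $I_n$ and observe that the condition for membership is monotone in $n$. Recall that $p \in I_n$ precisely when there is an integer $k$ with $1 \le k \le n$ such that the finite string $\{p, H(p), \ldots, H^{\circ(k-1)}(p)\}$ occurs as a right tail of some complete orbit $\{p_j\}_{j=-k'}^{0}$ (with $k'$ finite or infinite and $k' \ge k-1$); explicitly, $p = p_{-(k-1)}$, $H^{\circ j}(p) = p_{-(k-1)+j}$ for $0 \le j \le k-1$, the terminal point $H^{\circ(k-1)}(p) = p_0$ lies in $I$, and none of $p, H(p), \ldots, H^{\circ(k-2)}(p)$ lies in $I$. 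The crucial point is that both the complete orbit and the distinguished right tail of length $k$ are attached to $p$ and $H$ alone; the integer $n$ enters only through the bound $k \le n$.

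First I would fix an arbitrary $p \in I_n$ and produce the witnessing data: an integer $k$ with $1 \le k \le n$ together with a complete orbit having $\{p, H(p), \ldots, H^{\circ(k-1)}(p)\}$ as a right tail. Then, for any $m > n$, I would simply note that $1 \le k \le n < m$, so the very same $k$ and the very same complete orbit certify that $p \in I_m$. Since $p$ was arbitrary, this yields $I_n \subset I_m$. There is no genuine obstacle here: the assertion is a formal consequence of the fact that enlarging $n$ only enlarges the admissible range $\{1, \ldots, n\}$ of tail-lengths $k$, while the notions of \emph{complete orbit} and of \emph{right tail of length $k$} are themselves independent of $n$. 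The only point requiring a little care is the index bookkeeping in matching $\{p, H(p), \ldots, H^{\circ(k-1)}(p)\}$ with a suffix $\{p_{-(k-1)}, \ldots, p_0\}$ of the complete orbit, which is exactly what the displayed conditions above record.
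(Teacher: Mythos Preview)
Your argument is correct: membership in $I_n$ is witnessed by a $k$ with $1\le k\le n$ and a complete orbit that does not depend on $n$, so enlarging $n$ to $m$ only relaxes the constraint on $k$. The paper gives no proof at all (it simply remarks that the fact ``is not hard to see''), and your unwinding of Definition~4.9 is exactly the intended triviality.
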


\deff Let $I_\infty=\cup_{n=1}^{\infty}I_n$. The extended indeterminacy set of $H$ is defined as $E_H=\overline{I_\infty}$, where the closure is with respect to the Fubini-Study metric on ${\mathbb P}^2$.\\

\noindent We shall write $E_H$ simply as $E$ when there is no confusion about the map $H$. As in the one variable case, the Fatou set can also be defined using equicontinuity (\cite{FS2}).

\deff 
 Let $H: {\mathbb P}^2\to {\mathbb P}^2$ be a rational map such that ${\mathbb P}^2\setminus E$ is nonempty. A point $p \in \mathbb{P}^2\setminus E$ is said to be a Fatou point if there exists for every $\epsilon > 0$ some neighborhood $U$ of $p$ such that $diam H^{\circ n} (U \setminus E) < \epsilon$ for all $n$, where ${diam} S$ stands for the diameter of a subset $S$ in ${\mathbb P}^2$ with respect to the Fubini-Study metric. The Fatou set $\mathcal F (H)$ is the set of Fatou points of $H$. The Julia set ${\mathcal J}(H)$ is the complement $\mathbb{P}^2\setminus \mathcal F (H)$.

\begin{Rem} We note here that it is shown in \cite{Ue} that a point $p$ is in ${\mathcal F}(H)$ if and only if there exists a neighborhood $V$ of $p$ such that the sequence $\{H^{\circ n}\}$ converges normally on $V$.
\end{Rem}
It is not hard to see that the Fatou set is open and the Julia set is closed. The extended indeterminacy set $E$ is clearly  inside the Julia set. Lastly, the map $H$ preserves the Julia as well as the Fatou set. It is worth pointing out that there exists a rational map $H: {\mathbb P}^2\to {\mathbb P}^2$ whose Julia set is equal to ${\mathbb P}^2$, for example the following map from \cite{Forn}: \[H([z_0: z_1: z_2])=[(z_0-z_2)^2: (z_0-2z_2)^2: z_0^2].\] 
Julia set has been extensively studied for the H\'{e}non maps defined by 
\[H([z_0:z_1:z_2])=[z_0^2+xz_2^2+yz_1z_2:z_0z_2:z_2^2],\]
where $x$ and $y$ are real parameters.
We refer the readers to \cite{Forn,For,FS2,He, LM} for more relevant information about complex dynamics in several variables. The main concern here is the map
\begin{equation}
\label{eq: FP}
F([z_0: z_1: z_2]) = [ z_0(z_0^2-z_1^2-z_2^2): z_1^2 z_2:  z_2(z_0^2-z_2^2)]
\end{equation}  
defined through the self-similarity of the Koopman representation of $D_\infty$ (cf. (\ref{eq: F})).

\section{Dynamical properties of $F$}

This section shall establish the main theorem of this paper. For a general rational map $H:\mathbb{P}^2\to \mathbb{P}^2$, it is often important but rather technical to determine whether the extended inderminacy set $E$ is a proper subset of $\mathbb{P}^2$. Since $E$ is a subset of the Julia set ${\mathcal J}(H)$, if $E=\mathbb{P}^2$ then naturally ${\mathcal J}(H)=\mathbb{P}^2$ which is not an interesting case. The first subsection will look into this issue for the map $F$.

\subsection{Extended indeterminacy set}

Consider the map $F: \mathbb{P}^2\to \mathbb{P}^2$ defined in (\ref{eq: FP}), and let $I_n$ be the indeterminacy set of $F^{\circ n}$ in Definition 4.8. The first two indeterminacy sets $I_1$ and $I_2$ are not hard to determine, but since they are crucial to the subsequent discussions we include the details here.
\begin{lem} 
 \label{prop: orbits}
 $ I_1= \{ [\pm1:1:0], [0:1:0], [\pm 1: 0:1] \}. $
\end{lem}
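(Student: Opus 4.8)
The plan is to compute the indeterminacy set $I_1 = I(F)$ directly from the definition: $I_1$ consists of those $[z_0:z_1:z_2]\in\mathbb{P}^2$ at which all three homogeneous components of $F$ vanish simultaneously, i.e. the common zeros of
\[
z_0(z_0^2-z_1^2-z_2^2),\qquad z_1^2 z_2,\qquad z_2(z_0^2-z_2^2).
\]
So I would set up the system $z_0(z_0^2-z_1^2-z_2^2)=0$, $z_1^2 z_2=0$, $z_2(z_0^2-z_2^2)=0$ and enumerate the solutions in $\mathbb{P}^2$.

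First I would use the middle equation $z_1^2 z_2 = 0$ to split into the two cases $z_2 = 0$ and $z_1 = 0$. In the case $z_2 = 0$: the third equation is automatically satisfied, and the first becomes $z_0(z_0^2 - z_1^2) = 0$, giving $z_0 = 0$ (point $[0:1:0]$) or $z_0 = \pm z_1$ (points $[\pm 1:1:0]$); note $z_0=z_1=z_2=0$ is excluded in $\mathbb{P}^2$. In the case $z_1 = 0$: the first equation becomes $z_0(z_0^2 - z_2^2) = 0$ and the third is $z_2(z_0^2 - z_2^2) = 0$; if $z_0^2 = z_2^2$ both hold, giving $[\pm1:0:1]$, while if $z_0^2 \neq z_2^2$ we need $z_0 = 0$ and $z_2 = 0$, which is excluded. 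Collecting the finitely many solutions gives exactly $\{[\pm1:1:0],\,[0:1:0],\,[\pm1:0:1]\}$, which also confirms that $F$ is a genuine rational map (finite indeterminacy locus). One should double-check there is no overlap or overcounting between the two cases: the only way a point could appear in both is if $z_1 = z_2 = 0$, forcing $[1:0:0]$, but $F([1:0:0]) = [1:0:1]\neq[0:0:0]$, so $[1:0:0]\notin I_1$ and there is no double-counting.

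This argument is entirely elementary — a finite case analysis on a system of three polynomial equations — so there is no real obstacle; the only thing to be careful about is bookkeeping in projective space (remembering to discard the origin as a ``solution'' and not to miss scaling-equivalent points). I would present the case split on $z_1^2 z_2 = 0$ as the organizing principle and then list the sub-cases cleanly.
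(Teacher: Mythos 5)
Your proposal is correct and follows essentially the same route as the paper: both compute $I_1$ as the common zero locus of the three homogeneous components of $F$ and organize the case analysis around $z_1^2 z_2=0$, splitting into $z_1=0$ and $z_2=0$ and discarding the origin. One small slip in your final sanity check: $F([1:0:0])=[1:0:0]$ (it is a fixed point), not $[1:0:1]$, though the conclusion $[1:0:0]\notin I_1$ still holds since the first component equals $z_0^3\neq 0$ there.
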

\begin{proof}
By definition, the first indeterminacy set
\begin{align*}
I_1 &= \{z \in \mathbb{P}^2 \mid F(z) = (0,0,0) \} \\
	&= \{z\in \mathbb{P}^2 \mid  z_0(z_0^2-z_1^2-z_2^2)=0, z_1^2z_2=0, z_2(z_0^2-z_2^2)=0\}.
\end{align*}
Then either $z_1=0$ or $z_2=0$ from the middle condition. If $z_1=0$ then
\[z_0(z_0^2-z_2^2)=0 \quad \text{ and } \quad z_2 (z_0^2-z_2^2)=0.\] Hence either $z_0^2-z_2^2=0$ or $z_1=z_2=0.$
This shows that $[\pm 1:0:1]\in I_1$. If $z_2=0$ then by the first condition we have $z_0(z_0^2-z_1^2)=0,$ which occurs if either $z_0=0$ or $z_0^2=z_1^2$. This gives us the remaining points of $I_1$.
\end{proof}
Since $I_1$ is a finite set, the map $F$ is a rational map.

\begin{lem}  
\label{prop: I2}
$I_2 = I_1 \cup \{  [\pm 1: \zeta :1] \mid \zeta \in \mathbb{C} \}.$
\end{lem}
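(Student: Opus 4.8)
The plan is to compute $I_2$ directly from Definition 4.8, which says $p \in I_2$ iff the orbit segment $\{p, F(p), \ldots, F^{\circ(k-1)}(p)\}$ is a right tail of a complete orbit for some $1 \le k \le 2$. The case $k=1$ just recovers $I_1$ (since $p_0 \in I$ alone is a complete orbit of length $1$), so the new contribution comes from $k=2$: these are points $p \notin I_1$ such that $F(p) \in I_1$ and $F(p) \notin I(\text{something})$—more precisely, $p$ with $F(p)$ an indeterminacy point and $p$ itself a legitimate preimage, i.e. $p \notin I_1$ and (if the orbit has finite length) $p \in F(\mathbb{P}^2 \setminus I_1)$. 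So first I would reduce the problem to: determine $F^{-1}(I_1) \setminus I_1$ in $\mathbb{P}^2$, then discard any branch that fails the completeness condition (4).

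Second, I would run through the six points of $I_1 = \{[\pm 1:1:0],[0:1:0],[\pm1:0:1]\}$ and solve $F(z) = q$ for each, where solving means finding $z=[z_0:z_1:z_2]$ with $(z_0(z_0^2-z_1^2-z_2^2), z_1^2 z_2, z_2(z_0^2-z_2^2))$ proportional to the chosen $q$. The key observation to anticipate: for $q = [\pm 1: 0: 1]$ the defining proportionality forces $z_1^2 z_2 = 0$ against a nonzero first and third coordinate; if $z_2 = 0$ then the third coordinate $z_2(z_0^2-z_2^2)=0$ while we need it nonzero, contradiction; so we need $z_1 = 0$, and then $z_0(z_0^2-z_2^2) : 0 : z_2(z_0^2-z_2^2) = \pm 1:0:1$, which holds for all $[z_0:z_2]$ with $z_0^2 \ne z_2^2$ and $z_0 = \pm z_2 \cdot(\text{sign})$... wait, more carefully: we need $z_0 = \pm z_2$ up to the common factor $(z_0^2-z_2^2)$, but that factor vanishes exactly when $z_0 = \pm z_2$—so in fact the preimage of $[\pm1:0:1]$ along the $z_1=0$ line is empty. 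I expect the genuinely new points to come instead from $q = [\pm 1:1:0]$: setting $z_2 \ne 0$ is impossible (second coord $z_1^2 z_2$ would have to be comparable to a vanishing third coord $z_2(z_0^2 - z_2^2)$, forcing $z_0^2 = z_2^2$, then the first coord vanishes too), so $z_2 = 0$, giving $F(z) = [z_0(z_0^2-z_1^2):0:0]$—but that is $[1:0:0]$ or $[0:0:0]$, not $[\pm1:1:0]$. Hmm. The $\{[\pm1:\zeta:1]\}$ family with $\zeta$ free strongly suggests these map to $[0:1:0]$: indeed $F([\pm1:\zeta:1]) = [\pm1\cdot(1-\zeta^2-1): \zeta^2: \pm1\cdot(1-1)] = [-\zeta^2:\zeta^2:0] = [-1:1:0]$ for $\zeta \ne 0$—so they map into $I_1$. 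So I would carefully verify $F(\mathbb{P}^2) \cap I_1$ preimages and check $[\pm1:\zeta:1] \mapsto [\mp1:1:0]$-type points (being careful with signs: $z_0(z_0^2 - z_1^2 - z_2^2)$ with $z_0 = 1, z_2 = 1$ gives $1\cdot(1 - \zeta^2 - 1) = -\zeta^2$, and with $z_0=-1$ gives $-1\cdot(1-\zeta^2-1) = \zeta^2$).

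Third, having identified that the lines $\{z_0 = \pm z_2\}$, i.e. the families $\{[\pm1:\zeta:1]: \zeta \in \mathbb{C}\}$, collapse under $F$ onto points of $I_1$ (namely $[\mp1:1:0]$, or $[1:0:0]/[0:0:0]$ at the special values $\zeta = 0$), I would check each such point is actually \emph{in} $I_2$ by verifying the completeness conditions: that the points $[\pm1:\zeta:1]$ are not themselves in $I_1$ (true, since $I_1$ meets these lines only at $[\pm1:0:1]$ which are already counted), and—if needed—that they lie in $F(\mathbb{P}^2 \setminus I_1)$ or else that $k$ is allowed to be the terminal length. Then I would argue the reverse inclusion: any $p \in I_2 \setminus I_1$ must have $F(p) \in I_1$, and the preimage analysis shows the only such $p$ not already in $I_1$ lie on $\{z_0 = \pm z_2\}\cap\{z_1 \text{ arbitrary}\} = \{[\pm1:\zeta:1]\}$ together with possibly $[0:1:0]$'s preimages—so $I_2 \subseteq I_1 \cup \{[\pm1:\zeta:1]:\zeta\in\mathbb{C}\}$.

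\textbf{Main obstacle.} The computation itself is routine; the subtle point is the bookkeeping in Definition 4.8—specifically making sure the completeness condition (4), $p_{-k} \notin H(\mathbb{P}^2 \setminus I)$, is correctly applied so that we neither over- nor under-count, and handling the degenerate parameter values ($\zeta = 0$, and the sign ambiguities $z_0 = \pm z_2$) where a candidate preimage line collapses to a point already in $I_1$ or to a point like $[1:0:0]$ that may or may not lie in the image $F(\mathbb{P}^2 \setminus I_1)$. I expect the cleanest route is to first establish the set-theoretic identity $F^{-1}(I_1) = I_1 \cup \{[\pm1:\zeta:1]:\zeta\in\mathbb{C}\}$ as subsets of $\mathbb{P}^2$ (pure algebra, case analysis on whether $z_1, z_2$ vanish), and then observe that on the new component the orbit condition is automatically satisfied because these points are not in $I_1$ and their image $[\mp1:1:0] \in I_1$ gives a complete orbit of length $2$.
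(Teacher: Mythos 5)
Your overall plan is the same as the paper's: compute the preimage under $F$ of each point of $I_1$ and observe that only $[\pm1:1:0]$ contributes new points. Your treatment of $[\pm1:0:1]$ is correct and matches the paper. However, the execution has two genuine gaps. First, in the case $q=[\pm1:1:0]$ your forward analysis is wrong at the step ``forcing $z_0^2=z_2^2$, then the first coord vanishes too'': with $z_0^2=z_2^2$ one has $z_0^2-z_1^2-z_2^2=-z_1^2$, so the first coordinate is $-z_0z_1^2\neq 0$ (here $z_1\neq 0$ because the second coordinate of the target is nonzero, and $z_0\neq0$ because $z_0^2=z_2^2\neq0$). Your conclusion that ``$z_2\neq0$ is impossible'' is therefore false, and it is exactly the branch $z_2\neq0$ that produces the new indeterminacy points. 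You later recover the family by directly computing $F([\pm1:\zeta:1])=[\mp1:1:0]$, but that only gives the inclusion $\{[\pm1:\zeta:1]\}\subset F^{-1}(I_1)$; the reverse direction for this case --- that \emph{every} preimage of $[\pm1:1:0]$ lies on $z_0=\pm z_2$, i.e.\ is of the form $[\pm1:\zeta:1]$ --- is never correctly derived, and your final sentence simply asserts it. The correct argument (as in the paper) is: $z_1^2z_2\neq0$ and $z_2(z_0^2-z_2^2)=0$ force $z_0=\pm z_2\neq0$, and then the first equation reduces to $z_0z_1^2$, which is automatically proportional to the second coordinate; projectivizing gives exactly the lines $[\pm1:\zeta:1]$, $\zeta\neq0$.

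Second, you leave the point $[0:1:0]$ unresolved (``together with possibly $[0:1:0]$'s preimages'') and then state the inclusion $I_2\subseteq I_1\cup\{[\pm1:\zeta:1]\}$ anyway. This cannot be waved off: any preimage of $[0:1:0]$ would lie outside the claimed set (the points $[\pm1:\zeta:1]$ map to $[\mp1:1:0]$, not $[0:1:0]$), so the stated equality requires proving this preimage is empty. The paper does so: from $z_1^2z_2\neq0$ and $z_2(z_0^2-z_2^2)=0$ one gets $z_0^2=z_2^2$; if $z_0=0$ then $z_2=0$, a contradiction, so the first equation forces $z_0^2-z_1^2-z_2^2=0$, i.e.\ $z_1^2=z_0^2-z_2^2=0$, contradicting $z_1^2z_2\neq0$. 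As a side remark, your concern about completeness condition (4) of the orbit definition is unnecessary here: as in the paper, $I_2$ is simply $I_1$ together with $\{z\in\mathbb{P}^2\setminus I_1 \mid F(z)\in I_1\}$, so the whole lemma reduces to the three preimage computations, two of which your sketch does not yet carry out correctly.
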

\begin{proof}
Since $I_2  = \{z \in \mathbb{P}^2 \mid F(z) \in I_1\}$, we must consider pre-images to each point in $I_1$ with respect to $F$. First we consider pre-images to the points $[\pm 1:0:1]$. If $F(z) = [\pm 1:0:1]$ then
\[   z_0(z_0^2-z_1^2-z_2^2)=\pm 1, \quad  z_1^2z_2=0,  \quad z_2(z_0^2-z_2^2)=1.\]
It is clear from the third condition that $z_2\neq 0$. Hence $z_1=0$ from the middle condition. Then the remaining conditions are
\[  z_0(z_0^2-z_2^2)=\pm 1 \quad \text{ and }   \quad z_2(z_0^2-z_2^2)=1,\]
which means $z_0 = \pm z_2 \neq 0.$
But this would contradict with the third equation. Thus there is no pre-image to the points $[\pm1:0:1]$.

Next, suppose $z$ were a pre-image of $[0:1:0]$. Then
\[ z_0(z_0^2-z_1^2-z_2^2)=0, \quad z_1^2z_2=1, \quad z_2 (z_0^2-z_2^2)=0.\] 
If $z_0=0$ then the third equation would imply $z_2=0$, which would cause a contradiction with the middle equation. Therefore we must have $ z_0^2-z_1^2-z_2^2=0.$
Writing $z_1^2 = z_0^2-z_2^2$ and substituting it into the third equation, we have
\[z_2(z_1^2)=0.\] 
But this contradicts with the condition that $z_1^2z_2=1$. Therefore there does not exist any pre-images to $[0:1:0]$.

Lastly, we must account for any possible pre-image points $z$ to $[\pm1:1:0]$, or equivalently any $z\in \C^3$ such that
\[z_0(z_0^2-z_1^2-z_2^2)=\pm \lambda, \quad  z_1^2z_2=\lambda,  \quad z_2(z_0^2-z_2^2)=0,\]
for some $\lambda\neq 0$. The second and third equations above imply $z_0^2-z_2^2=0$, i.e., $z_0=\pm z_2$. Applying this to the first equation yields $z_0z_1^2=\pm\lambda$ which is equivalent to the second equation. Projecting such $z$ to ${\mathbb P}^2$, one sees that the set \[\{ z\in \mathbb{P}^2 \mid z= [\pm1: \zeta :1] ,  \zeta \neq 0 \}\] is the set of pre-images to the point $[\pm 1:1:0]$.
\end{proof}

To determine $I_n$ for $n \geq 3$, we shall need the function $\hat{\tau}$ defined in (\ref{eq: tauhat}). One sees that since $\hat{\tau}$ is homogeneous of degree $0$, it can be considered as a map from ${\mathbb P}^2$ to $\hat{\C}$ which we shall denote by $\tau$. Observe that now $\tau$ is holomorphic on the subset ${\mathbb P}^2\setminus \phi(S_0)$. Then the following proposition follows directly from (\ref{eq: Tche}).

\begin{prop}
\label{prop: pre-diagram}
For $k \geq 1$ the following diagram is commutative: 
\begin{displaymath}
  \xymatrix{
    {\mathbb{P}^2 \setminus \displaystyle {I}_{k}}\ar[r]^{F^{\circ k}}  \ar[d]^{\tau}
    &  \mathbb{P}^2  \ar[d]^{\tau} \\
    {\hat{\mathbb{C}}} \ar[r]^{T^{\circ k}} 
    & {\hat{\mathbb{C}}}.
  }
\end{displaymath}
\end{prop}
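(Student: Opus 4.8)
The plan is to establish the commutative diagram by induction on $k$, using the base case $k=1$ and the semi-conjugacy relation $\hat\tau\circ F = T\circ\hat\tau$ from Equation (\ref{eq: Tche}). First I would settle the base case: on $\mathbb{P}^2\setminus I_1$ the map $F$ is well-defined (since $I_1$ is exactly the indeterminacy set of $F$, by Lemma \ref{prop: orbits}), so the composition $\tau\circ F$ makes sense there, and Equation (\ref{eq: Tche}), read in homogeneous coordinates and then projected to $\mathbb{P}^2$, gives $\tau(F(z)) = T(\tau(z))$ for all $z\in\mathbb{P}^2\setminus I_1$. One caveat to address carefully is that $\hat\tau$ as defined in (\ref{eq: tauhat}) is only guaranteed to satisfy the semi-conjugacy identity \emph{everywhere} on $\C^3$ (the excerpt explicitly notes this holds even when $z_1z_2=0$), so descending to $\mathbb{P}^2$ is harmless; the only points where $F$ fails to be defined are those of $I_1$, which are removed.

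For the inductive step, suppose the diagram commutes for some $k\geq 1$, i.e.\ $\tau\circ F^{\circ k} = T^{\circ k}\circ\tau$ on $\mathbb{P}^2\setminus I_k$. I want to prove it for $k+1$ on $\mathbb{P}^2\setminus I_{k+1}$. The key point is that for $z\in\mathbb{P}^2\setminus I_{k+1}$, the iterate $F^{\circ(k+1)}(z)$ is well-defined, which forces $z\notin I_{k+1}\supset I_k$ (using the containment $I_k\subset I_{k+1}$ from the Corollary following Definition 4.8), so $F^{\circ k}(z)$ is defined and not in $I_1$ — otherwise $F^{\circ(k+1)}(z)=F(F^{\circ k}(z))$ would be undefined, contradicting $z\notin I_{k+1}$. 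Hence I may write $F^{\circ(k+1)}(z) = F(F^{\circ k}(z))$ with $F^{\circ k}(z)\in\mathbb{P}^2\setminus I_1$, and then compute
\begin{align*}
\tau\big(F^{\circ(k+1)}(z)\big) &= \tau\big(F(F^{\circ k}(z))\big) = T\big(\tau(F^{\circ k}(z))\big) = T\big(T^{\circ k}(\tau(z))\big) = T^{\circ(k+1)}(\tau(z)),
\end{align*}
where the second equality is the base case applied at the point $F^{\circ k}(z)$ and the third is the inductive hypothesis applied at $z$. This closes the induction.

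The main obstacle — really the only subtle point — is the bookkeeping about \emph{where} each map in the chain is defined: one must be sure that excising $I_{k+1}$ from the source of $F^{\circ(k+1)}$ is precisely enough to guarantee that every intermediate point $F^{\circ j}(z)$, $0\le j\le k$, avoids $I_1$ (equivalently, that $z$ avoids the full pre-orbit of $I_1$ of length up to $k+1$), so that the componentwise application of Equation (\ref{eq: Tche}) is legitimate. This is essentially the content of the definition of $I_{n}$ (a point is in $I_n$ iff some initial segment of its orbit is a right tail of a complete orbit hitting an indeterminacy point), together with $I_k\subset I_{k+1}$; I would spell out this inclusion-of-definitions argument explicitly rather than leave it implicit. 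Once that is in place, the identity $\tau\circ F^{\circ k} = T^{\circ k}\circ\tau$ holds pointwise on $\mathbb{P}^2\setminus I_k$, which is the assertion of the proposition.
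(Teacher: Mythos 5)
Your argument is correct and is essentially the paper's: the paper simply notes that the proposition ``follows directly'' from the semi-conjugacy $\hat{\tau}\circ F = T\circ\hat{\tau}$ of Equation (\ref{eq: Tche}) after descending to $\mathbb{P}^2$, and your induction on $k$ with the bookkeeping that $z\notin I_{k+1}$ guarantees every intermediate iterate $F^{\circ j}(z)$, $0\le j\le k$, avoids $I_1$ is exactly the natural way to spell that out. No gap; you have just made explicit what the paper leaves implicit.
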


An important method to simplify the computation of $F^{\circ n}$ is to use the function $\T(z)$ to write
\begin{align}
 F(z) &= [2\T(z) z_0z_1z_2  : z_1^2z_2: z_1z_2( 2\T(z)z_2 +z_1)]\\
&=[2\T(z)z_0:z_1:2\T(z) z_2 +z_1],\ \ \ z\in {\mathbb P}^2\setminus I_1.
\label{eq: F w T}
\end{align}

\Rem Some justification needs to be made about (\ref{eq: F w T}) for the case $z_1z_2=0$. If $z_1=0$ then $z_0^2\neq z_2^2$ since $z\notin I_1$. Hence by (\ref{eq: FP}) we have 
\[F(z)=[z_0(z_0^2-z_2^2): 0: z_2(z_0^2-z_2^2)]=[z_0: 0: z_2].\]
In this case since $\T(z)=\infty\neq 0$, if one computes $F(z)$ using (\ref{eq: F w T}) and factors out $\T(z)$ then one also gets $[z_0: 0: z_2]$. If $z_2=0$, then $z_0^2\neq z_1^2$ since $z\notin I_1$. Hence by (\ref{eq: FP}) we have $F(z)=[z_0(z_0^2-z_1^2): 0: 0]=[z_0: 0: 0]$.
If we evaluate $F(z)$ using (\ref{eq: F w T}) and factor out $\T(z)=\infty$, then we also get 
$[z_0: 0: 0]$. In short, (\ref{eq: F w T}) holds for all $z\notin I_1$.\\

Using $I_1$ and $I_2$ we then describe the indeterminacy set for any number of iterations. 
\begin{prop} The indeterminacy set $I_n$ when $n\geq 3$ is
\label{prop: In'}
\[  \medmath{I_n = I_{n-1} \cup \Bigg\{ z \in \mathbb{P}^2 \mid z= \Bigg[ \frac{\pm1}{ \eta} : \zeta : \frac{1}{\eta } - \zeta \sum_{j=1}^{n-2} \frac{1}{2^{j} \displaystyle  \prod_{k=0}^{j-1} T^{\circ k}(\T (z))} \Bigg], \zeta \in\mathbb{C} \Bigg\}}, \]
where $\eta = 2^{n-2}  \displaystyle \prod_{k=0}^{n-3} T^{\circ k}(\T(z))$ and $z \in I_n \setminus I_{n-1}$.
\end{prop}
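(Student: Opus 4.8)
The plan is to argue by induction on $n$, the cases $n=1,2$ being Lemmas~\ref{prop: orbits} and~\ref{prop: I2}. Unwinding Definition~4.8 together with the nesting $I_k\subseteq I_m$ for $k<m$, one sees that for $n\geq 2$ a point lies in $I_n$ exactly when it lies in $I_1$ or its $F$-image is defined and lies in $I_{n-1}$; that is, $I_n=I_1\cup F^{-1}(I_{n-1})$ with $F^{-1}$ taken over $\mathbb{P}^2\setminus I_1$. With the convention $I_0=\emptyset$, this gives $z\in I_n\setminus I_{n-1}$ precisely when $z\notin I_{n-1}$ and $F(z)\in I_{n-1}\setminus I_{n-2}$. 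Hence it is enough to show (i) every $z$ of the displayed form lies in $I_n$, and (ii) every $z\in I_n\setminus I_{n-1}$ has the displayed form; with $I_{n-1}\subseteq I_n$ this yields the asserted equality.

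The computational engine is the compact expression (\ref{eq: F w T}), namely $F(z)=[\,2\T(z)z_0:z_1:2\T(z)z_2+z_1\,]$ on $\mathbb{P}^2\setminus I_1$, together with the semi-conjugacy (\ref{eq: Tche}), i.e. $\T(F(z))=T(\T(z))$ (equivalently Proposition~\ref{prop: pre-diagram} with $k=1$). For the base case $n=3$, take $w=[\pm1:\zeta':1]\in I_2\setminus I_1$ — all the new points of $I_2$ by Lemma~\ref{prop: I2} — and solve $F(z)=w$. Since (\ref{eq: F w T}) scales linearly in a representative of $z$, one may choose the representative so that $(2\T(z)z_0,z_1,2\T(z)z_2+z_1)=(\pm1,\zeta',1)$, forcing $z_1=\zeta'$, $z_0=\pm1/(2\T(z))$, $z_2=(1-\zeta')/(2\T(z))$; this is exactly the claimed description of $I_3\setminus I_2$, since there $\eta=2^{1}T^{\circ0}(\T(z))=2\T(z)$ and the single term $\sum_{j=1}^{1}(\cdots)=1/(2\T(z))$.

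For the inductive step, assume the formula for $I_{n-1}\setminus I_{n-2}$. If $w:=F(z)\in I_{n-1}\setminus I_{n-2}$, write $w$ in the inductive form with parameter $\zeta_w=w_1$ and with $\T(w)$ in place of $\T(z)$; the crucial observation is $\T(w)=T(\T(z))$, so $T^{\circ k}(\T(w))=T^{\circ(k+1)}(\T(z))$ for every $k\geq0$. Solving $F(z)=w$ via (\ref{eq: F w T}) and normalizing the representative as before gives $z_1=\zeta_w$, $2\T(z)z_0=\pm1/\eta'(w)$ and $2\T(z)z_2+z_1$ equal to the third coordinate of $w$, where $\eta'(w)=2^{n-3}\prod_{k=0}^{n-4}T^{\circ k}(\T(w))$. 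The substitution $T^{\circ k}(\T(w))=T^{\circ(k+1)}(\T(z))$ turns $2\T(z)\eta'(w)$ into $2^{n-2}\prod_{k=0}^{n-3}T^{\circ k}(\T(z))=\eta$, so $z_0=\pm1/\eta$; and dividing the third-coordinate equation by $2\T(z)$, the index shift $k\mapsto k+1$ reindexes the inductive sum $\sum_{j=1}^{n-3}$ while the extra $-z_1/(2\T(z))$ supplies the missing $j=1$ term, producing $z_2=1/\eta-\zeta_w\sum_{j=1}^{n-2}\big(2^{j}\prod_{k=0}^{j-1}T^{\circ k}(\T(z))\big)^{-1}$. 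This proves (ii); for (i), a $z$ of the displayed form maps under $F$, by the same computation run in reverse, into $I_{n-1}\setminus I_{n-2}\subseteq I_{n-1}$, so $z\in F^{-1}(I_{n-1})\subseteq I_n$.

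The main obstacle is not this bookkeeping but the degenerate configurations, which must be treated by hand: points with $z_1z_2=0$ (where (\ref{eq: F w T}) has to be read through the Remark following it), points with $\T(z)\in\{0,\infty\}$, and points at which some iterate $T^{\circ k}(\T(z))$ vanishes, so that $\eta=0$ and the displayed coordinates become formal. In each such case one shows the point either already belongs to $I_1\subseteq I_{n-1}$ — hence it is irrelevant to $I_n\setminus I_{n-1}$ and is in any event absorbed into $I_{n-1}$ in the stated union — or is recovered as a limit after clearing denominators; one also checks that the branch $z_1z_2=0$ contributes no preimages beyond those listed. Finally, running the preimage computation forces the consistency relation $T(\T(z))=\T(F(z))$, which determines $\T(z)^2$ in terms of $w$ and thereby confirms that every non-degenerate value of $\zeta\in\mathbb{C}$ does give a genuine element of $I_n$.
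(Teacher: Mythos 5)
Your proposal is correct and follows essentially the same route as the paper: induction on $n$ starting from the explicit $I_2\setminus I_1$, solving $F(z)=w$ componentwise via the compact form (\ref{eq: F w T}), and using the semi-conjugacy $\T(F(z))=T(\T(z))$ to shift the indices in the product and the sum. Your extra attention to the degenerate cases ($z_1z_2=0$, vanishing $T^{\circ k}(\T(z))$) is more careful than the paper's treatment but does not change the argument.
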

Although this is not a concrete way to describe $I_n$, it will be sufficient for us to obtain some larger results later.
\begin{proof}
Before proceeding we introduce the notation $I_n' = I_n \setminus I_{n-1}$ to exclusively focus on new indeterminacy points. For example $I_2' = \{ [\pm 1: \zeta : 1] \mid \zeta \neq 0\}$. First we examine $I_3'= \{z \in \mathbb{P}^2 \mid F(z) \in I_2'\}$, and use (\ref{eq: F w T}) to obtain equations
\[ 2 \T(z) z_0 = \pm 1 , \qquad z_1 = \zeta , \qquad 2 \T(z) z_2 +z_1 = 1.\]
Clearly $\T(z) \neq 0$ in this case, hence
\[I_3' = \bigg\{ z\in \mathbb{P}^2 \mid z=\bigg[ \frac{\pm1}{2\T(z)} : \zeta: \frac{1-\zeta}{2 \T(z)}\bigg], \zeta \in \mathbb{C}, \zeta\neq 0 \bigg\}. \]
We shall repeat this process to discover the pre-images to the points of $I_n'$. But before proceeding note that when observing a pre-image there is a shift in notation, specifically if $w\in I_3'$ and $F(z)=w$ then $\T(w) = \T(F(z))= T(\T(z))$. Let
\[I_4' = \{z\in \mathbb{P}^2 \mid  F(z) \in I_3' \}.\]
 Then for $z \in I_4'$ the following system of equations must be satisfied:
\[2 \T(z) z_0 = \frac{\pm1}{2T(\T(z))} , \qquad z_1 = \zeta , \qquad 2z_2 \T(z) + z_1 = \frac{1-\zeta}{2 T(\T(z))}.\]
Solving for $z_0, z_1$ and $z_2$, we have
\[ I_4' = \{ z \in \mathbb{P}^2 \mid z=\Big[ \frac{\pm1}{2^2 \T(z) T(\T(z))} : \zeta : \frac{1-\zeta}{2^2 \T(z) T(\T(z))} - \frac{\zeta}{2 \T(z)}\Big] , \zeta \in \C, \zeta\neq 0\}. \]
Then one may use induction to demonstrate that $I_n'$ is of the form
\[\medmath{  \Bigg\{ z= \Big[ \frac{\pm1}{ \beta(z)} : \zeta : \frac{1}{\beta(z) } - \zeta \sum_{j=1}^{n-2} \frac{1}{2^{j}   \prod_{k=0}^{j-1} T^{\circ k}(\T (z))} \Big] \mid \zeta \in\mathbb{C}, \zeta\neq 0 \Bigg\}}\]
for all $n\geq 3$, where $\beta(z) = 2^{n-2}   \prod_{k=0}^{n-3} T^{\circ k}(\T(z))$.
Assume that $w\in I_n'$ is of the above form and $z \in I_{n+1}'$ such that $F(z) = w$. Then
\begin{align*}
F(z) &= [2 \T(z) z_0: z_1 : 2 \T(z) z_2 +z_1] = [w_0: w_1:w_2] \\
&= \medmath{\bigg[ \frac{\pm1}{\beta(w)} : \zeta : \frac{1}{\beta(w)} - \zeta \sum_{j=1}^{n-2} \frac{1}{2^{j} \prod_{k=0}^{j-1} T^{\circ k}(\T(w))} \bigg] }.
\end{align*}
Since $F(z) = w$, we can re-write $F(z)$ above as:
\[\medmath{\bigg[ \frac{\pm1}{\beta (F(z))} : \zeta : \frac{1}{\beta (F(z))} - \zeta \sum_{j=1}^{n-2} \frac{1}{2^{j} \prod_{k=0}^{j-1} T^{\circ k}(\T(F(z)))} \bigg] }.\]
Now comparing each component individually one has $z_1 = \zeta$,
\[ 2 \T(z) z_0 = \frac{\pm1}{2^{n-2} \prod_{k=0}^{n-3} T^{\circ k}(\T(F(z)))}, \]
which can be re-written using the fact $\tau(F(z))=T(\tau (z))$ as \[ z_0 = \frac{\pm1}{2^{n-1} \prod_{k=0}^{n-2} T^{\circ k}(\T(z))},\]
and
\[ 2 \T(z) z_2 + \zeta = \frac{1}{2^{n-2} \prod_{k=0}^{n-3} T^{\circ k}(\T(F(z))} - \zeta \sum_{j=1}^{n-2} \frac{1}{2^{j} \prod_{k=0}^{j-1} T^{\circ k}(\T(F(z)))}. \]
Solving $z_2$ from the last term, we obtain
\[z_2 =  \frac{1}{2^{n-1} \prod_{k=0}^{n-2} T^{\circ k}(\T(F(z))} - \zeta \sum_{j=1}^{n-1} \frac{1}{2^{j} \prod_{k=0}^{j-1} T^{\circ k}(\T(z))}.\]
Thereby if $z \in I_{n+1}'$ then $z$ is a point of the form:
\[\medmath{\bigg[\frac{\pm1}{2^{n-1} \prod_{k=0}^{n-2} T^{\circ k}(\T(z))}: \zeta : \frac{1}{2^{n-1} \prod_{k=0}^{n-2} T^{\circ k}(\T(z))} - \zeta \sum_{j=1}^{n-1} \frac{1}{2^{j} \prod_{k=0}^{j-1} T^{\circ k}(\T(z))} \bigg]}\]
for some nonzero $\zeta \in \C$, which completes the proof.
\end{proof}

Now we are in position to prove the following important fact. Recall that the extended indeterminacy set $E$ is the closure of $\cup_{k=1}^\infty {I}_k.$
\begin{lem} $E \neq \mathbb{P}^2.$
\end{lem}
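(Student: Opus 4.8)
The plan is to exhibit an explicit open subset of $\mathbb{P}^2$ that contains no indeterminacy point of any iterate $F^{\circ n}$, and which is moreover closed away from $\cup_k I_k$, so that its complement cannot be all of $\mathbb{P}^2$. The natural candidate is a neighborhood of a point that is manifestly in the Fatou set and stays away from the preimage tower described in Proposition \ref{prop: In'}. Since every point of $\cup_{k\geq 3} I_k'$ has first coordinate $z_0 = \pm 1/\eta$ with $\eta = 2^{n-2}\prod_{k=0}^{n-3}T^{\circ k}(\tau(z))$, and $I_1, I_2$ are listed explicitly in Lemmas \ref{prop: orbits} and \ref{prop: I2}, one should be able to control where these points can accumulate by tracking the value of $\tau$ on them.

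The cleanest route uses the semi-conjugacy $\tau\circ F = T\circ\tau$ (Proposition \ref{prop: pre-diagram}) together with Theorem \ref{thm: Tch}. First I would observe that if $z\in I_n'$ for some $n\geq 3$, then $F^{\circ(n-1)}(z)\in I_1$, so $\tau(z)$ must be a point whose orbit under $T$ reaches $\tau(I_1)$ in finitely many steps; a quick check shows $\tau([\pm1:1:0])$, $\tau([\pm1:0:1])$, $\tau([0:1:0])$ all lie in $\hat{\mathbb C}$ with the relevant orbits confined to the Julia set $[-1,1]$ of $T$ (indeed $\tau([\pm1:1:0])$ and $\tau([\pm1:0:1])$ evaluate to $\pm\infty$ or to points handled directly, and in any case the finite preimages of these under iterates of $T$ all lie in $[-1,1]\cup\{\infty\}$, since $T$ maps $\hat{\mathbb C}\setminus[-1,1]$ into itself). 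Consequently $\tau(I_n)\subset [-1,1]\cup\{\infty\}$ for all $n$, so $\cup_k I_k\subset \tau^{-1}\big([-1,1]\cup\{\infty\}\big)$. The set $\tau^{-1}\big(\hat{\mathbb C}\setminus([-1,1]\cup\{\infty\})\big)$ is a nonempty open subset of $\mathbb{P}^2\setminus\phi(S_0)$ on which $\tau$ is holomorphic, hence its closure still omits points (e.g. any point with $\tau(z)$ a fixed interior value like $\tau(z)=2$, which is repelling for $T$ but certainly not in $[-1,1]$), giving $E=\overline{\cup_k I_k}\neq\mathbb{P}^2$.

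A slightly more hands-on alternative, if one wants to avoid arguing about closures of $\tau$-preimages: pick a concrete point such as $p=[1:0:0]$. From $F(z)=[z_0(z_0^2-z_1^2):0:0]$ near the slice $z_2=0,z_1=0$, or simply from $F([1:0:0])=[1:0:0]$, one sees $p$ is a fixed point lying off every $I_n$ (it has $z_1=z_2=0$, which no point of $I_n'$ for $n\geq 2$ satisfies, and it is not among the finite list $I_1\cup I_2$'s degenerate points either). One then checks that a full Fubini–Study ball around $[1:0:0]$ meets $\cup_k I_k$ in at most the finitely many points of $I_1\cup I_2$ lying near it — none, since those are bounded away from $[1:0:0]$ — and that the remaining (infinitely many) indeterminacy points all have $\tau(z)\in[-1,1]$, hence cannot accumulate at $[1:0:0]$ where $\tau$ takes the value $\infty$ and is continuous on $\mathbb{P}^2\setminus S_0 \ni [1:0:0]$. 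Therefore a neighborhood of $[1:0:0]$ is disjoint from $\overline{\cup_k I_k}=E$, proving $E\neq\mathbb{P}^2$.

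The main obstacle I anticipate is the bookkeeping around the discontinuity of $\hat\tau$ (equivalently $\tau$) on $S_0$: the argument that $\tau(I_n)\subset[-1,1]\cup\{\infty\}$ needs care because some indeterminacy points lie on or near $\phi(S_0)$ where $\tau$ jumps, and because the semi-conjugacy diagram in Proposition \ref{prop: pre-diagram} is stated on $\mathbb{P}^2\setminus I_k$ rather than globally. Handling the points $[\pm1:0:1]$ and $[\pm1:1:0]$ of $I_1$ — where $z_1z_2=0$ — requires invoking the Remark after (\ref{eq: F w T}) and checking the value of $\tau$ directly rather than through the formula. Once these boundary cases are dispatched, the rest is a routine compactness/continuity argument identifying an open set in the Fatou side that the indeterminacy tower cannot reach.
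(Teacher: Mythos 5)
Your argument has a genuine gap: both of your routes rest on the claim that $\tau\big(\cup_k I_k\big)\subset[-1,1]\cup\{\infty\}$, and that claim is false. Already for $I_2$ it fails: by Lemma \ref{prop: I2} the new indeterminacy points are $[\pm1:\zeta:1]$ with $\zeta\in\mathbb{C}$, and for these $\tau([\pm1:\zeta:1])=\frac{1-\zeta^2-1}{2\zeta}=-\zeta/2$, which sweeps out all of $\mathbb{C}\setminus\{0\}$. The reason your semi-conjugacy argument breaks is exactly the point you flagged as "bookkeeping": these points map under $F$ to $[\pm1:1:0]$, which lies on $\phi(S_0)$, where $\hat\tau$ is set equal to $0$ by convention and is discontinuous; there the identity $\tau(F(z))=T(\tau(z))$ does not hold (for $z=[1:\zeta:1]$ one has $T(\tau(z))=\zeta^2/2-1$, while the convention gives $\tau(F(z))=0$), so you cannot conclude that $\tau(z)$ lies in a $T$-preimage of $[-1,1]\cup\{\infty\}$. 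In fact the situation is the opposite of what you assert: the inclusion (\ref{eq: In' subset}) derived from Proposition \ref{prop: In'} places the sets $I_n'$ ($n\geq3$) inside the resolvent set $p^c(A_\lambda)$, i.e.\ precisely where $\tau\notin[-1,1]$, so the indeterminacy tower does spread through the region you were hoping to declare free of it. The same false premise undermines your second route: the assertion that the indeterminacy points "all have $\tau(z)\in[-1,1]$, hence cannot accumulate at $[1:0:0]$" is unsupported, and whether $[1:0:0]\in E$ is not settled by what you wrote.

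What is actually needed, and what the paper does, is a quantitative bound locating $I_n'$ rather than a $\tau$-confinement argument: on $\{z\in p^c(A_\pi):|\tau(z)|>1\}$ one has $|T^{\circ k}(\tau(z))|>1$ for all $k$, hence $|f_{n-1}(z)|<1$ for the partial sums $f_n$ of (\ref{eq: fn}); combined with $z_2=\pm z_0-z_1f_{n-1}(z)$ from (\ref{eq: In' subset}) this forces every indeterminacy point in that region (and also the points of $I_1\cup I_2$) to satisfy $|z_2|\leq|z_0|+|z_1|$. The open set $V=\{|\tau(z)|>1\}\cap\{|z_2|>|z_0|+|z_1|\}$, which contains $[1:1:3]$, is therefore disjoint from $\cup_k I_k$ and hence from its closure $E$. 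If you want to salvage your approach, you must replace the claim $\tau(I_n)\subset[-1,1]\cup\{\infty\}$ with some such effective control on where the curves $I_n'$ can lie; the closure/continuity part of your argument is fine once a correct open region avoiding all $I_n$ is exhibited.
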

\begin{proof} It is not initially apparent that $E$ is not the entirety of $\mathbb{P}^2$ so we demonstrate it in detail. To do so we show there exists an open set in $p^c(A_\pi)$ that is not contained in $ E$. A key ingredient in the proof is the following function
\begin{equation}
\label{eq: fn}
f_n(z) = \sum_{j=1}^{n-1} \frac{1}{2^{j} \prod_{k=0}^{j-1} T^{\circ k}(\T(z))}, \ n \geq 2, z\in p^c(A_\pi).
\end{equation} 
The function $f_n(z)$ is well defined on $p^c(A_\pi)$ for two reasons: 
\begin{enumerate}
\item In this case $ \T(z) \notin [-1,1]$ by Theorem \ref{thm: PA}, 
\item $T^{\circ k}(\T(z))\notin [-1, 1]$ for all $k$ because of Theorem \ref{thm: Tch}.
\end{enumerate} 
Moreover, since $T(x)$ is holomorphic on $\hat{\C}$ and $\tau$ is holomorphic on the set ${\mathbb P}^2\setminus \phi(S_0)$ which contains $p^c(A_\pi)$, we see that $f_n$ is holomorphic on $p^c(A_\pi)$.
Since $\T(z)$ is holomorphic on $p^c(A_\pi)$, for every compact subset $K\subset p^c(A_\pi)$ the image $\T(K)$ is compact in $\hat{\C}\setminus [-1, 1]$. Thus $\{T^{\circ k}(\T(z))\}$ converges uniformly to $\infty$ on $K$ by Theorem \ref{thm: Tch}. This implies that the sequence $\{f_n\}$ converges normally on $ p^c(A_\pi)$. Define for $z \in p^c(A_\pi)$ that 
\begin{equation}
\label{eq: fz}
f(z) = \lim_{n \to \infty} f_n(z),
\end{equation}
then $f$ is holomorphic on $p^c(A_\pi)$. 

Now we turn to the indeterminacy sets. Observe that Proposition \ref{prop: In'} implies that for all $n \geq 3$ we have
\begin{equation}
\label{eq: In' subset}
I_n' \subset \big\{z \in p^c(A_\lb) \mid z_2 = \pm z_0 - z_1 f_{n-1}(z) \big\},
\end{equation}
which is an analytic set in $p^c(A_\lb)$. Since
\begin{equation}
\label{eq: In from In'}
 I_n = I_n' \cup I_{n-1}' \cup I_{n-2}' \cup \cdots \cup I_2,
\end{equation}
by (\ref{eq: In' subset}) we have
\[ I_n \subset \Big( I_2 \cup \bigcup_{k=2}^{n-1} \{z_2 = \pm z_0 -z_1 f_k(z) \} \Big), \text{ when} \ n\geq 3.\] 
Now we are ready to construct an open subset in ${\mathbb P}^2$ that is not contained in the extended indeterminacy set $E$. First, for all $z \in p^c(A_\pi)$ such that $|\T(z)| >1$ we have
\[|T(\T(z))| = |2 (\T(z))^2-1| \geq 2|\T(z)|^2-1 >1,\]
and subsequently $|T^{\circ n}(\T (z))|>1$ for all $n\geq 1$. Applying this fact to (\ref{eq: fn}) allows us to conclude that if $n \geq 2$ then
\[ |f_n(z)| < \frac{1}{2} + \frac{1}{4} + \cdots +\frac{1}{2^{n-1}} <1.\]
Therefore, by (\ref{eq: In' subset}) if such $z$ is in $I_n$ for some $n\geq 3$ then $z$ must satisfy
\begin{equation}
\label{eq: In cond.}
 |z_2| \leq |z_0|+|z_1|. 
\end{equation}
Observe that Lemma \ref{prop: orbits} and \ref{prop: I2} show that points in $I_1$ and $I_2$ also satisfy the inequality (\ref{eq: In cond.}). This implies that if $z\in p^c(A_\pi)$ is such that 
$|\T(z)|>1$ and $z\in E$, then $ |z_2| \leq |z_0|+|z_1|$. We therefore have the following inclusion
\begin{equation}
\label{eq: P-E}
V:=\{z\in p^c(A_\lambda)\mid |\T(z)|>1\}\cap \{ |z_2| > |z_0|+|z_1|\}\subset {\mathbb P}^2\setminus E.
\end{equation}
It is not hard to see that the open set $V$ is nonempty. For example, one easily verifies that $[1:1:3] \in V$, and this completes the proof.
\end{proof}

\subsection{The Julia set of $F$}

We are now ready to determine the Julia set of the map $F$ on $\mathbb{P}^2$.
Recall that in $\mathbb{P}^2$ the projective spectrum of the dihedral group with respect to the Koopman representation is
\[p(A_\pi) =\bigcup_{-1 \leq x \leq 1} \{z\in \mathbb{P}^2 \mid  z_0^2-z_1^2-z_2^2-2z_1z_2 x=0\}. \]
Recall also that $I_\infty =  \bigcup_{j=1}^{\infty} I_j\subset E$. Then
the next proposition follows readily from Proposition \ref{prop: pre-diagram}.
\begin{prop}
\label{prop: diagram}
The following diagram is commutative 
\begin{displaymath}
  \xymatrix{
    {\mathbb{P}^2 \setminus I_\infty}\ar[r]^{F}  \ar[d]^{\tau}
    &  {\mathbb{P}^2 \setminus I_\infty}  \ar[d]^{\tau} \\
    {\hat{\mathbb{C}}} \ar[r]^{T} 
    & {\hat{\mathbb{C}}}.
  }
\end{displaymath}
\end{prop}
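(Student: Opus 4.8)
The plan is to recognize that the claimed square is nothing but the $k=1$ case of Proposition \ref{prop: pre-diagram}, except that its upper-right corner has been shrunk from $\mathbb{P}^2$ to $\mathbb{P}^2\setminus I_\infty$. Since $\mathbb{P}^2\setminus I_\infty\subseteq \mathbb{P}^2\setminus I_1$, the map $F$ is already a well-defined holomorphic map on $\mathbb{P}^2\setminus I_\infty$, the map $\tau$ is defined on all of $\mathbb{P}^2$, and the identity $\tau\circ F=T\circ\tau$ holds on $\mathbb{P}^2\setminus I_1$ by Proposition \ref{prop: pre-diagram}. Consequently the one thing that has to be checked is that $F$ sends $\mathbb{P}^2\setminus I_\infty$ into $\mathbb{P}^2\setminus I_\infty$; granting that, the diagram commutes automatically.

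To show $F(\mathbb{P}^2\setminus I_\infty)\subseteq \mathbb{P}^2\setminus I_\infty$ I would argue the contrapositive: if $z\notin I_1$ and $F(z)\in I_n$ for some $n$, then $z\in I_{n+1}$. Unwinding the complete-orbit definition of $I_n$, the hypothesis $F(z)\in I_n$ means that $\{F(z),F^{\circ 2}(z),\ldots,F^{\circ k}(z)\}$ is a right tail of some complete orbit $\{p_j\}_{j=-m}^{0}$ for some $1\le k\le n$, with $F(z)=p_{-(k-1)}$. Now prepend $z$ by putting $q_{-k}=z$ and $q_j=p_j$ for $-(k-1)\le j\le 0$; if $z$ lies in $F(\mathbb{P}^2\setminus I_1)$ keep extending backwards through preimages chosen outside $I_1$, a procedure that either stops at a point not in $F(\mathbb{P}^2\setminus I_1)$ (yielding a finite complete orbit) or never stops (yielding an infinite one). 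In every case $\{q_j\}$ verifies all four requirements of a complete orbit --- the clause ``$q_j\notin I$ for $j<0$'' using precisely the hypothesis $z\notin I_1$ --- and $\{z,F(z),\ldots,F^{\circ k}(z)\}$ is one of its right tails, of length $k+1\le n+1$. Hence $z\in I_{k+1}$, and therefore $z\in I_{n+1}$ by the monotonicity corollary $I_p\subseteq I_q$ for $p<q$. Taking the union over $n$ gives $\{z\notin I_1\mid F(z)\in I_\infty\}\subseteq I_\infty$, which is the desired invariance. A slicker alternative is to first extract the bookkeeping identity $I_{n+1}=I_1\cup\{z\in\mathbb{P}^2\setminus I_1\mid F(z)\in I_n\}$, already implicit in the proofs of Lemma \ref{prop: I2} and Proposition \ref{prop: In'}, and read the invariance off directly.

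Once the invariance is in place, for any $z\in \mathbb{P}^2\setminus I_\infty$ both $F(z)$ and $\tau(F(z))$ make sense with $F(z)\in\mathbb{P}^2\setminus I_\infty$, so $\tau\circ F$ and $T\circ\tau$ are genuine maps $\mathbb{P}^2\setminus I_\infty\to\hat{\C}$, and they coincide because $z\in\mathbb{P}^2\setminus I_1$ and Proposition \ref{prop: pre-diagram} (with $k=1$) applies. That finishes the argument. The only place where real work is hidden is the invariance step, and even there it is a mechanical unwinding of the definition of the higher indeterminacy sets together with the monotonicity of $\{I_n\}$; I expect this bookkeeping --- in particular the case split on whether the backward preimage chain terminates --- to be the main (and essentially the only) obstacle.
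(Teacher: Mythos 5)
Your proposal is correct and takes essentially the same route as the paper, which simply observes that the statement follows readily from Proposition \ref{prop: pre-diagram} (the case $k=1$), leaving the forward invariance of $\mathbb{P}^2\setminus I_\infty$ under $F$ implicit. Your unwinding of the complete-orbit definition (equivalently, the identity $I_{n+1}=I_1\cup\{z\notin I_1\mid F(z)\in I_n\}$ together with monotonicity of $\{I_n\}$) just supplies explicitly the bookkeeping the paper takes for granted.
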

Proposition \ref{prop: diagram} and the expression (\ref{eq: F w T}) enable us to rewrite $F^{\circ  n}$ as follows.
\begin{lem}
\label{lem: Fn}
For $n \geq 2$ and $z\in {\mathbb P}^2\setminus  I_\infty$ one has
\[ F^{\circ n} (z)=\medmath{\bigg[2^n z_0 \prod_{k=0}^{n-1} T^{\circ k} ( \T(z) ): z_1: 2^n z_2 \prod_{k=0}^{n-1} T^{\circ k} ( \T(z) )   + z_1 \Big(1 + \sum_{k=1} ^{n-1} 2^k \prod_{i=1}^{k} T^{\circ (n-i)} ( \T(z) )\Big)  \bigg] }.\]
\end{lem}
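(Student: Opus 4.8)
The natural approach is induction on $n$, using the base case $n=2$ and the recursion $F^{\circ(n+1)} = F\circ F^{\circ n}$ together with the one-step formula $(\ref{eq: F w T})$, namely $F(w) = [2\T(w)w_0 : w_1 : 2\T(w)w_2 + w_1]$ for $w\notin I_1$. Since $z\in\mathbb{P}^2\setminus I_\infty$ means $F^{\circ n}(z)\notin I_1$ for every $n$ (indeed, the whole forward orbit avoids $I_\infty\supset I_1$ and the diagram of Proposition~\ref{prop: diagram} applies at every stage), applying $(\ref{eq: F w T})$ to $w = F^{\circ n}(z)$ is legitimate. The crucial simplification is that $\T(F^{\circ n}(z)) = T^{\circ n}(\T(z))$ by Proposition~\ref{prop: diagram} (iterating the semi-conjugacy), so every appearance of $\T$ evaluated along the orbit becomes an iterate of the Tchebyshev polynomial applied to $\T(z)$.

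\textbf{Base case.} First I would verify the formula for $n=2$ by direct substitution: starting from $F(z) = [2\T(z)z_0 : z_1 : 2\T(z)z_2 + z_1]$, apply $(\ref{eq: F w T})$ once more with the multiplier $\T(F(z)) = T(\T(z))$. The first coordinate becomes $2T(\T(z))\cdot 2\T(z)z_0 = 2^2 z_0 \prod_{k=0}^{1}T^{\circ k}(\T(z))$, the second coordinate stays $z_1$, and the third becomes $2T(\T(z))\cdot(2\T(z)z_2 + z_1) + z_1 = 2^2 z_2 \prod_{k=0}^{1}T^{\circ k}(\T(z)) + z_1(1 + 2T(\T(z)))$, which matches the claimed expression with $n=2$ since $\sum_{k=1}^{1}2^k\prod_{i=1}^{k}T^{\circ(2-i)}(\T(z)) = 2T^{\circ 1}(\T(z)) = 2T(\T(z))$.

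\textbf{Inductive step.} Assuming the formula for $n$, I would write $w = F^{\circ n}(z)$ in the stated form and compute $F(w) = [2\T(w)w_0 : w_1 : 2\T(w)w_2 + w_1]$ using $\T(w) = T^{\circ n}(\T(z))$. The first coordinate gives $2 T^{\circ n}(\T(z))\cdot 2^n z_0\prod_{k=0}^{n-1}T^{\circ k}(\T(z)) = 2^{n+1}z_0\prod_{k=0}^{n}T^{\circ k}(\T(z))$, as desired; the middle coordinate is unchanged. The third coordinate is $2T^{\circ n}(\T(z))\bigl(2^n z_2\prod_{k=0}^{n-1}T^{\circ k}(\T(z)) + z_1(1 + \sum_{k=1}^{n-1}2^k\prod_{i=1}^{k}T^{\circ(n-i)}(\T(z)))\bigr) + z_1$; the first piece telescopes to $2^{n+1}z_2\prod_{k=0}^{n}T^{\circ k}(\T(z))$, and the $z_1$-piece needs to be reorganized into $z_1(1 + \sum_{k=1}^{n}2^k\prod_{i=1}^{k}T^{\circ(n+1-i)}(\T(z)))$. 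The main obstacle — really the only nonroutine part — is this last index-bookkeeping: one must check that multiplying $1 + \sum_{k=1}^{n-1}2^k\prod_{i=1}^{k}T^{\circ(n-i)}(\T(z))$ by $2T^{\circ n}(\T(z))$ and adding $1$ produces exactly $1 + \sum_{k=1}^{n}2^k\prod_{i=1}^{k}T^{\circ(n+1-i)}(\T(z))$. This follows because $2T^{\circ n}(\T(z))\cdot 2^k\prod_{i=1}^{k}T^{\circ(n-i)}(\T(z)) = 2^{k+1}\prod_{i=1}^{k+1}T^{\circ(n+1-i)}(\T(z))$ (the factor $T^{\circ n}(\T(z)) = T^{\circ(n+1-(k+1))+k}$... more carefully: $T^{\circ n} = T^{\circ(n+1-i)}$ with $i = k+1$), so shifting the summation index from $k$ to $k+1$ sends $\sum_{k=1}^{n-1}$ to $\sum_{k=2}^{n}$, and the leading term $2T^{\circ n}(\T(z)) = 2^1 T^{\circ((n+1)-1)}(\T(z))$ supplies the missing $k=1$ term; the added $+1$ supplies the constant. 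I would present this index shift explicitly and leave the remaining arithmetic to the reader.
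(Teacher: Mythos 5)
Your proposal is correct and follows essentially the same route as the paper: induction on $n$ via the one-step formula (\ref{eq: F w T}) applied to $w=F^{\circ n}(z)$, using the semi-conjugacy $\T(F^{\circ n}(z))=T^{\circ n}(\T(z))$, with the base case $n=2$ computed directly. Your treatment of the index shift in the third coordinate is in fact more explicit than the paper's, which leaves that bookkeeping to an "applying the process iteratively" remark.
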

\begin{proof}
If $z \in \mathbb{P}^2 \setminus I_\infty$ then by  (\ref{eq: F w T}) 
\begin{align*}
F(z)=[2 \T (z) z_0: z_1: 2 \T (z)z_2 + z_1]:= [z_0': z_1': z_2'],
\end{align*}
and hence using the fact that $\T (z') = \T(F(z)) = T (\T(z))$ we obtain
\begin{align}
\label{eq: Fof2a}
F^{\circ 2} (z) &= [2 \T (z') z_0': z_1': 2 \T (z')z_2' + z_1']\\
&=[2^2 \T(z) T( \T (z)) z_0: z_1: 2^2\T(z) T( \T (z))z_2 + z_1(1+2 T(\T(z)) )].
\end{align}
To prove the lemma by induction on $n$, we assume
\begin{align*}
 F^{\circ n} (z) &= [2 T^{\circ (n-1)} ( \T (z)) z_0^{(n-1)}: z_1: 2 T^{\circ (n-1)} ( \T (z)) z_2^{(n-1)}+ z_1]  \\
 &= [z_0^{(n)}:z_1^{(n)}:z_2^{(n)}].
\end{align*}
Then 
\begin{align*}
 F^{\circ (n+1)} (z) &= [2 T( \T (z^{(n)})) z_0^{(n)}: z_1^{(n)}: 2 T( \T (z^{(n)}))z_2^{(n)} + z_1^{(n)}] \\
&= [2 T^{\circ n} ( \T ) z_0^{(n)}: z_1: 2 T^{\circ n} ( \T (z)) z_2^{(n)}+ z_1].
\end{align*}
Substituting $z_0^{(n)}$ and $z_2^{(n)}$ using the induction assumption, we can re-write $F^{\circ (n+1)} (z)$ as
\[\medmath{ [2^2 T^{\circ n} ( \T (z))T^{\circ (n-1)} ( \T(z) ) z_0^{(n-1)}: z_1: 2 T^{\circ n}(\T(z) ) \big( 2T^{\circ (n-1)} ( \T(z) ) z_2^{(n-1)}+z_1\big)+ z_1] .}\]
Applying the process iteratively one establishes the lemma.
\end{proof}

Now we are equipped to reveal a connection between the Fatou set of $F$ and the projective resolvent set $p^c(A_\pi)$.
\begin{lem}  $p^c(A_\pi) \setminus E \subset {\mathcal F}(F) $. 
\label{thm: fatou}
\end{lem}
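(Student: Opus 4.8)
The plan is to show that on $p^c(A_\pi)\setminus E$ the iteration sequence $\{F^{\circ n}\}$ converges normally, which by the remark following Definition~4.10 (the criterion of \cite{Ue}) is equivalent to being in $\mathcal{F}(F)$. Fix a point $p\in p^c(A_\pi)\setminus E$. Since $p^c(A_\pi)$ is open and $E$ is closed, there is a neighborhood $U$ of $p$ with $U\subset p^c(A_\pi)\setminus E$, and shrinking $U$ we may take its closure $K=\overline{U}$ to be a compact subset of $p^c(A_\pi)\setminus E$. The key structural input is Lemma~\ref{lem: Fn}, which gives an explicit formula for $F^{\circ n}(z)$ on $\mathbb{P}^2\setminus I_\infty$ (hence on $K$) in terms of the quantities $T^{\circ k}(\T(z))$, together with the fact established in the proof that $E\neq\mathbb{P}^2$: namely that on $p^c(A_\pi)$ one has $\T(z)\notin[-1,1]$ and therefore $T^{\circ k}(\T(z))\notin[-1,1]$ for every $k$, with $\{T^{\circ k}(\T(z))\}$ converging uniformly to $\infty$ on $K$ by Theorem~\ref{thm: Tch}.

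The main step is to analyze the three homogeneous coordinates in Lemma~\ref{lem: Fn} as $n\to\infty$, using the right normalization. Write $P_n(z)=\prod_{k=0}^{n-1}T^{\circ k}(\T(z))$; on $K$ we have $|P_n(z)|\to\infty$ uniformly and in fact $|P_n(z)|$ grows super-exponentially (since $|T^{\circ k}(\T(z))|\to\infty$). Dividing the coordinate vector in Lemma~\ref{lem: Fn} by $2^n P_n(z)$, the first coordinate becomes $z_0$, the second becomes $z_1/(2^n P_n(z))\to 0$ uniformly on $K$, and the third becomes
\[
z_2 + \frac{z_1}{2^n P_n(z)}\Big(1+\sum_{k=1}^{n-1}2^k\prod_{i=1}^{k}T^{\circ(n-i)}(\T(z))\Big).
\]
The sum in parentheses, after dividing by $2^nP_n(z)$, telescopes into a tail of the series defining $f(z)$ from (\ref{eq: fz}): indeed $\frac{2^k\prod_{i=1}^k T^{\circ(n-i)}(\T(z))}{2^n P_n(z)} = \frac{1}{2^{n-k}\prod_{j=0}^{n-k-1}T^{\circ j}(\T(z))}$, so the whole expression equals $-z_1 f_n(z)$ up to reindexing, and $f_n\to f$ normally on $p^c(A_\pi)$ as shown in the proof of Lemma~\ref{lem: Fn}'s predecessor. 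Hence on $K$ the normalized iterates converge uniformly to the holomorphic limit map $z\mapsto [z_0: 0: z_2 - z_1 f(z)]$. One must check this limit is genuinely a point of $\mathbb{P}^2$, i.e.\ that $(z_0, 0, z_2-z_1f(z))\neq \mathbf{0}$ throughout $K$; this holds because $z_0=0$ together with $z_2-z_1 f(z)=0$ would force $z$ onto a proper analytic subset, and one checks directly it cannot meet $p^c(A_\pi)$ (if $z_0=z_1=0$ then $z=[0:0:1]$, which lies in $p(A_\pi)$ since $L_x([0:0:1]) = -1 \neq 0$ — wait, $[0:0:1]$ has $L_x = -1$, so it is in $p^c$; but then $z_2\neq 0$ so the limit is $[0:0:1]\neq\mathbf{0}$, fine; the remaining case $z_0\neq 0$ is automatic). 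Normal convergence to a well-defined holomorphic limit on a neighborhood of each such $p$ is exactly the \cite{Ue} criterion, giving $p\in\mathcal{F}(F)$.

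The step I expect to be the main obstacle is the bookkeeping that identifies the normalized third coordinate with $-z_1 f_n(z)$ and hence transfers the already-established normal convergence of $\{f_n\}$ to normal convergence of the normalized iterates — the indices in Lemma~\ref{lem: Fn} run "backwards" ($T^{\circ(n-i)}$) while those in $f_n$ run "forwards" ($T^{\circ k}$), so the reindexing must be done carefully, and one needs the uniform lower bound on $|P_n(z)|$ over $K$ to control the finitely-many-terms-to-tail passage and to guarantee the first coordinate stays bounded away from making the whole vector degenerate. A secondary technical point is confirming that $K$ avoids $I_\infty$ so that Lemma~\ref{lem: Fn} applies at every stage; this is immediate since $K\subset \mathbb{P}^2\setminus E\subset\mathbb{P}^2\setminus I_\infty$. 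Once normal convergence on a neighborhood is in hand, the conclusion $p^c(A_\pi)\setminus E\subset\mathcal{F}(F)$ follows.
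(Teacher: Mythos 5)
Your route is the same as the paper's: apply Lemma \ref{lem: Fn}, factor $2^n\prod_{k=0}^{n-1}T^{\circ k}(\tau(z))$ out of all three homogeneous coordinates, use Theorem \ref{thm: PA} and Theorem \ref{thm: Tch} to get $T^{\circ k}(\tau(z))\to\infty$ normally on $p^c(A_\pi)$ so the middle coordinate tends to $0$, identify the third coordinate with $z_2+z_1f_n(z)$ and quote the normal convergence of $\{f_n\}$ from (\ref{eq: fz}). Two corrections, though. First, a sign slip: your own reindexing identity $\frac{2^k\prod_{i=1}^{k}T^{\circ(n-i)}(\tau(z))}{2^nP_n(z)}=\frac{1}{2^{n-k}\prod_{j=0}^{n-k-1}T^{\circ j}(\tau(z))}$ produces $+z_1f_{n+1}(z)$ (the extra ``$1$'' supplying the $m=n$ term), not $-z_1f_n(z)$; the limit is $[z_0:0:z_2+z_1f(z)]$ as in Corollary \ref{thm: ff}. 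This does not affect membership in $\mathcal{F}(F)$, but it contradicts your displayed computation.

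Second, and more substantively, the nondegeneracy check you added is not correct as written. The dangerous locus is not $z_0=z_1=0$ but $z_0=0$, $z_1\neq 0$, $z_2+z_1f(z)=0$, and your dismissal ``one checks directly it cannot meet $p^c(A_\pi)$'' is false: on the line $z_0=0$, with $w=z_2/z_1$ and $u=-w$, one has $\tau(z)=\frac{1}{2}(u+u^{-1})$, and summing the series via $2^{j}\prod_{k=0}^{j-1}T^{\circ k}(\tau(z))=\frac{u^{2^{j}}-u^{-2^{j}}}{u-u^{-1}}$ gives $f(z)=-w$ whenever $|w|<1$; for instance $[0:1:-\frac{1}{2}]\in p^c(A_\pi)$ satisfies $z_2+z_1f(z)=0$, so the normalized limit vector degenerates there. (Indeed on $z_0=0$ the map acts by $w\mapsto -w^2$, such points iterate to $[0:1:0]\in I_1$, and they should be viewed as lying in $E$, being accumulation points of the sets $I_n'$ whose points satisfy $z_2=\pm z_0-z_1f_{n-1}(z)$.) So a pointwise argument that the degeneracy locus misses $p^c(A_\pi)$ cannot succeed; one must either show that this locus is contained in $E$, or handle those points separately. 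To be fair, the paper's own proof passes over this nondegeneracy issue in silence, so your main line of argument is no less complete than the paper's; but the explicit justification you offer for it should be removed or replaced by an appeal to the excluded set $E$.
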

\begin{proof}
As observed before, in view of Theorem \ref{thm: PA} if $z \in p^c(A_\pi)$ then $\T(z) \notin [-1,1]$, and Theorem \ref{thm: Tch} implies that $T^{\circ n} (\T(z)) \notin [-1,1]$, in particular $T^{\circ n}(\T(z)) \neq 0$ for all $n\geq 1$.
Using Proposition \ref{lem: Fn} and factoring out $2^n \prod_{k=0}^{n-1} T^{\circ k} ( \T(z) )$ from each component of $F^{\circ n}$, one can write
\begin{align}
F^{\circ n} (z) &= \left[ z_0:\dfrac{z_1}{2^n  \prod_{k=0}^{n-1} T^{\circ k} ( \T (z) )} : z_2   + z_1\sum_{k=1}^{n-1} \dfrac{2^{k-n}}{ \prod_{j=0}^{k} T^{\circ j} ( \T (z))}\right] \\
 &=\left [z_0:\dfrac{z_1}{2^n  \prod_{k=0}^{n-1} T^{\circ k} ( \T (z) )} : z_2 +z_1 f_n(z) \right],
\label{eq: factored Fn}
\end{align}
where $f_n$ is as defined in (\ref{eq: fn}). Since $\T: p^c (A_\pi)\to \hat{\C}\setminus[-1,1]$, Theorem \ref{thm: Tch} implies that $T^{\circ n}(\T(z)) \to \infty$ normally on $p^c (A_\pi)$, and hence the middle component in (\ref{eq: factored Fn}) tends to $0$ normally as $n\to \infty$. Moreover, since $f_n$ converges normally to $f$ on $p^c(A_\pi)$ by (\ref{eq: fz}), the iterations
$F^{\circ n}$ converges normally on $p^c(A_\pi)\setminus E$ and 
\[ \lim_{n \to \infty} F^{\circ n}(z) = [z_0:0:z_2 +z_1 f(z)]:= F_\ast ([z_0:z_1:z_2]),\ \ z\in p^c(A_\pi)\setminus E. \]
This completes our proof relating $p^c(A_\pi)$ to $\mathcal{F}(F)$.
\end{proof}

Observe that Lemma \ref{thm: fatou} implies that the Julia set ${\mathcal J}(F)$ is contained in $p(A_\pi)\cup E$. Soon we will conclude that this inclusion is in fact an equality. We shall need a bit more preparation before we proceed to examine the Julia set. First, one observes that if $z \in p(A_\pi)$ then $x: = \T(z) \in [-1,1]$, and hence we can rewrite $T(x)$ in terms of cosine by setting $x=\cos \theta$, where $\theta=\cos^{-1}(x)\in [0, \pi]$. Then $T (x) = \cos( 2\theta)$ and hence $T ^{\circ n} (x) = \cos (2^n \theta)$. The $k$-th Tchebyshev polynomial of the first kind $T_k(x)$ is defined by the equality $\cos (k\theta)=T_k(\cos \theta).$ In other words, the polynomial $T_k$ is semi-conjugate to the multiplication by constant $k$. Hence
\begin{equation} 
T^{\circ n}(\cos (\theta))= T_{2^n} (\cos (\theta)).
\end{equation}
The following two well-known properties of the Tchebyshev polynomials (\cite{MH}) are relevant to our study. 
\begin{prop} \label{prop: Tch}
Let $T_k$ stand for the $k$-th Tchebyshev polynomial on ${\mathbb R}$.
\begin{enumerate}[(a)]
\item $2T_m(x) T_n(x) = T_{m+n}(x) + T_{|m-n|}(x). $

\item $\prod_{j=0}^n T^{\circ j}(x) = \frac{1}{2^n} \sum_{k=1}^{2^n} T_{2k-1}(x) .$
In particular, 
if $\frac{\cos^{-1}(x)}{\pi} \notin \mathbb{Z}$, then
 \begin{equation}\label{eq: sine}
\prod_{j=0}^{n} T^{\circ j}(x) = \frac{ \sin (2^{n+1} \cos^{-1}(x))}{2^{n+1}\sin \left(\cos^{-1}(x)\right)}.
\end{equation}
 \end{enumerate}
\end{prop}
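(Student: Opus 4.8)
The plan is to derive both identities from the trigonometric substitution $x=\cos\theta$ together with the defining relation $\cos(k\theta)=T_k(\cos\theta)$, and then to pass back to all of $\hat{\C}$ by a polynomial-identity argument where necessary. All of this is elementary; the only place that needs genuine care is the index bookkeeping in the induction for part (b).

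For part (a), I would set $x=\cos\theta$ with $\theta\in[0,\pi]$, so $T_m(x)=\cos(m\theta)$ and $T_n(x)=\cos(n\theta)$. The classical product-to-sum formula gives $2\cos(m\theta)\cos(n\theta)=\cos((m+n)\theta)+\cos((m-n)\theta)$, and since cosine is even this last term equals $\cos(|m-n|\theta)=T_{|m-n|}(x)$ (when $m=n$ this is $T_0=1$, so there is no exceptional case). This establishes the identity for all $x\in[-1,1]$; as both sides are polynomials in $x$, they agree on all of $\C$, hence on $\hat{\C}$.

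For part (b), recall from the discussion preceding the proposition that $T^{\circ j}=T_{2^j}$, since $T=T_2$ is semi-conjugate to multiplication by $2$ and iterating $j$ times corresponds to multiplication by $2^j$. Thus it suffices to prove $\prod_{j=0}^{n}T_{2^j}(x)=\tfrac{1}{2^n}\sum_{k=1}^{2^n}T_{2k-1}(x)$, which I would do by induction on $n$. The base case $n=0$ reads $T_1=T_1$. For the inductive step, multiply the hypothesis by $T_{2^{n+1}}$ and apply part (a) to each factor: $T_{2^{n+1}}(x)\,T_{2k-1}(x)=\tfrac12\big(T_{2^{n+1}+2k-1}(x)+T_{2^{n+1}-(2k-1)}(x)\big)$, where $2^{n+1}-(2k-1)>0$ for $1\le k\le 2^n$, so the absolute value is not needed. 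The key observation is that as $k$ runs over $1,\dots,2^n$ the subscripts $2^{n+1}+2k-1$ run through the odd integers in $[2^{n+1}+1,\,2^{n+2}-1]$ while $2^{n+1}-(2k-1)$ runs through the odd integers in $[1,\,2^{n+1}-1]$; together these give exactly $\{2k-1: 1\le k\le 2^{n+1}\}$ with no overlap or omission, yielding $\prod_{j=0}^{n+1}T_{2^j}(x)=\tfrac{1}{2^{n+1}}\sum_{k=1}^{2^{n+1}}T_{2k-1}(x)$ and closing the induction.

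For the ``in particular'' statement, the hypothesis $\cos^{-1}(x)/\pi\notin\z$ is precisely what guarantees $\sin\theta\neq 0$ after writing $x=\cos\theta$ with $\theta=\cos^{-1}(x)$. Then $\sum_{k=1}^{2^n}T_{2k-1}(x)=\sum_{k=1}^{2^n}\cos((2k-1)\theta)$, and multiplying through by $2\sin\theta$ telescopes via $2\sin\theta\cos((2k-1)\theta)=\sin(2k\theta)-\sin((2k-2)\theta)$, so $2\sin\theta\sum_{k=1}^{2^n}\cos((2k-1)\theta)=\sin(2^{n+1}\theta)$. Dividing by $2^{n}\cdot 2\sin\theta$ gives the stated closed form. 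The main (mild) obstacle is the combinatorial step in the induction for (b): one must carefully verify that the two families of subscripts $\{2^{n+1}+2k-1\}$ and $\{2^{n+1}-(2k-1)\}$, $1\le k\le 2^n$, partition the odd integers up to $2^{n+2}-1$; everything else is routine trigonometry.
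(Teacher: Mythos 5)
Your proof is correct, and it is worth noting that the paper itself offers no argument for this proposition at all: it is quoted as a pair of well-known Chebyshev identities with a citation to Mason--Handscomb \cite{MH}. So you have supplied a self-contained elementary derivation of a fact the authors simply import. Your part (a) is the standard product-to-sum identity transported through $T_k(\cos\theta)=\cos(k\theta)$ and extended off $[-1,1]$ by polynomial identity; your part (b) correctly uses $T^{\circ j}=T_{2^j}$ (which the paper records just before the proposition, with the convention $T^{\circ 0}=\mathrm{id}=T_1$, matching your base case) and the induction bookkeeping checks out: for $1\le k\le 2^n$ the subscripts $2^{n+1}+2k-1$ and $2^{n+1}-(2k-1)$ are positive odd integers lying respectively above and below $2^{n+1}$, and together they exhaust the odd integers up to $2^{n+2}-1$ exactly once, which gives the factor $\tfrac{1}{2^{n+1}}$. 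The telescoping step $2\sin\theta\cos((2k-1)\theta)=\sin(2k\theta)-\sin((2k-2)\theta)$ for the ``in particular'' statement is also right, and the hypothesis $\cos^{-1}(x)/\pi\notin\z$ is indeed exactly the condition $\sin(\cos^{-1}(x))\neq 0$. One small remark: the identities in your argument are valid for complex $\theta$ as well, which is relevant because the paper later (in the proof of Theorem \ref{thm: ef}) invokes (\ref{eq: sine}) for complex arguments; your proof covers that extension with no extra work, which is a small bonus over a bare citation.
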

These properties shall enable us to express the iterations of $F^{\circ n}$ on the projective spectrum $p(A_\pi)$ using trigonometric functions. We are now ready to state and prove the main theorem of this paper.
\begin{thm}\label{thm: main}
Consider the infinite dihedral group $D_\infty$ and the associated map $F$ defined in (\ref{eq: FP}) through the self-similarity of its Koopman representation $\pi$, and let $E$ be the extended indeterminacy set of $F$. Then ${\mathcal J}(F)=p(A_\pi) \cup E$ .
\end{thm}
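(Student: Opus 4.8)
The plan is to prove the two inclusions separately. One direction, ${\mathcal J}(F)\subset p(A_\pi)\cup E$, is already in hand: Lemma \ref{thm: fatou} gives $p^c(A_\pi)\setminus E\subset {\mathcal F}(F)$, so taking complements yields ${\mathcal J}(F)\subset p(A_\pi)\cup E$. Thus the real work is the reverse inclusion $p(A_\pi)\cup E\subset {\mathcal J}(F)$. Since $E\subset {\mathcal J}(F)$ always holds (the extended indeterminacy set is inside the Julia set, as noted after Definition 4.11), it suffices to show $p(A_\pi)\subset {\mathcal J}(F)$, i.e. that no point $z$ with $\tau(z)=x\in[-1,1]$ is a Fatou point. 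Equivalently, I must show that the sequence $\{F^{\circ n}\}$ fails to be equicontinuous (in the sense of the Fatou-point definition, accounting for $E$) at every $z\in p(A_\pi)$.

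First I would set up coordinates on $p(A_\pi)$ via the substitution $x=\tau(z)=\cos\theta$ with $\theta\in[0,\pi]$, so that $T^{\circ k}(\tau(z))=\cos(2^k\theta)$ and, by Proposition \ref{prop: Tch}(b), $\prod_{k=0}^{n-1}T^{\circ k}(\tau(z))=\frac{\sin(2^n\theta)}{2^n\sin\theta}$ whenever $\theta/\pi\notin{\mathbb Z}$. Substituting this into the formula for $F^{\circ n}$ from Lemma \ref{lem: Fn} and factoring appropriately, the first coordinate of $F^{\circ n}(z)$ carries a factor like $z_0\sin(2^n\theta)/\sin\theta$, and the middle/third coordinates can be rewritten with the $f_n$-type sums, now expressed as finite trigonometric series using Proposition \ref{prop: Tch}(a). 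The point is that along $p(A_\pi)$ the products $\prod_k T^{\circ k}(\tau(z))$ do not escape to $\infty$ — rather they oscillate (the quantity $\sin(2^n\theta)$ is bounded but does not converge), so the normalizing factors that produced convergence on $p^c(A_\pi)$ no longer do their job. I would then exhibit, for a given $z\in p(A_\pi)$, two subsequences $n_j, m_j\to\infty$ along which $F^{\circ n_j}(z)$ and $F^{\circ m_j}(z)$ converge to genuinely different points of ${\mathbb P}^2$ (exploiting that $2^n\theta \bmod 2\pi$ is dense for $\theta/\pi$ irrational, and is eventually periodic but with distinct sub-limits for $\theta/\pi$ rational — the dyadic and non-dyadic rational cases should be handled, perhaps separately, and the finitely many points where $\theta\in\{0,\pi\}$, which lie in $I_\infty$ or map into it, can be absorbed into $E$). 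Having $F^{\circ n_j}(z)$ and $F^{\circ m_j}(z)$ tend to distinct limits, and combining this with continuity of $F^{\circ n}$ near $z$ off $E$, forces any neighborhood $U$ of $z$ to have $\operatorname{diam}F^{\circ n}(U\setminus E)$ bounded below, so $z\notin {\mathcal F}(F)$.

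There are two subtleties I would be careful about. The first is points $z\in p(A_\pi)$ that also lie in $I_\infty$ or its closure: for these, $z\in E\subset {\mathcal J}(F)$ trivially, so they need no separate argument, but I should make sure the "distinct sub-limit" argument is only invoked on $p(A_\pi)\setminus E$. The second is that the Fatou-point definition quantifies over $U\setminus E$, not $U$, so I cannot simply cite non-convergence of $\{F^{\circ n}(z)\}$ at the single point $z$ itself; I need the sub-limits to persist on a full (punctured-by-$E$) neighborhood, which follows because for $z$ in the open set $p(A_\pi)\setminus(\overline{I_\infty}\cup\phi(S_0))$ the function $\tau$ is holomorphic and non-constant, so $\theta$ varies continuously and I can choose nearby points with $\theta/\pi$ irrational where the density argument applies cleanly. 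The main obstacle I anticipate is precisely this bookkeeping: organizing the trigonometric identities for $F^{\circ n}$ on $p(A_\pi)$ so that the oscillatory (non-equicontinuous) behavior is manifest, and verifying it uniformly enough on a neighborhood rather than just pointwise — in particular pinning down the limit points $F^{\circ n_j}(z)$ along well-chosen subsequences and checking they really are distinct as points of ${\mathbb P}^2$ rather than accidentally coinciding because of the projective identifications.
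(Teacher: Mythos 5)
Your strategy is essentially the paper's: the inclusion ${\mathcal J}(F)\subset p(A_\pi)\cup E$ from Lemma \ref{thm: fatou}, the general fact $E\subset {\mathcal J}(F)$, the substitution $\tau(z)=\cos\theta$ together with Proposition \ref{prop: Tch} to put Lemma \ref{lem: Fn} in trigonometric form on the spectrum, and the transfer to arbitrary neighborhoods of $p\in p(A_\pi)\setminus E$ using that $p_1p_2\neq 0$ (so $\tau$ is holomorphic, hence open, near $p$) and that angles with $\theta/\pi$ non-dyadic are dense. The paper, however, finishes in one cheap step: by the remark following the definition of the Fatou set (Ueda's characterization \cite{Ue}), a Fatou point has a neighborhood on which $\{F^{\circ n}\}$ converges normally, so it suffices that every neighborhood of $p$ contains a point $q$ with $\cos^{-1}(\tau(q))/\pi$ non-dyadic at which the single orbit $\{F^{\circ n}(q)\}$ fails to converge.

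The genuine soft spot in your plan is the last deduction: that two distinct subsequential limits of $F^{\circ n}(z)$, ``combined with continuity of $F^{\circ n}$ near $z$ off $E$,'' force $\operatorname{diam} F^{\circ n}(U\setminus E)$ to be bounded below. That implication is false as stated: the diameter condition controls the spread of values over $U\setminus E$ at each fixed time $n$, and a small image can drift between neighborhoods of $w_1$ and $w_2$ along different subsequences while remaining small at every single $n$ (iterates acting near $z$ like one rotation applied $n$ times illustrate this). To contradict the diameter definition you must separate two orbits at the same times. The cheapest repair inside your framework is to pair the oscillating spectral point $q_2$ with a nearby resolvent point $q_1\in U\cap p^c(A_\pi)$, whose orbit converges to $F_\ast(q_1)$ by (the proof of) Lemma \ref{thm: fatou}; then along your subsequences $d\big(F^{\circ n}(q_1),F^{\circ n}(q_2)\big)$ tends to $d(F_\ast(q_1),w_1)$ and $d(F_\ast(q_1),w_2)$, at least one of which is $\geq d(w_1,w_2)/2$ --- but when $p$ itself has dyadic angle the points $w_1,w_2$ depend on $U$, so you still owe a lower bound on $d(w_1,w_2)$ that does not degenerate as $U$ shrinks. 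All of this bookkeeping is what the Ueda remark lets the paper skip, since it reduces the task to mere pointwise non-convergence, exactly what the trigonometric formula yields for non-dyadic angles. A second, smaller inaccuracy: $\{2^n\theta \bmod 2\pi\}$ is not dense for every irrational $\theta/\pi$ (the doubling map has many non-dense orbits), but density is not needed --- non-dyadicity of $\theta/\pi$ already rules out convergence of $\{F^{\circ n}(z)\}$, which is all the argument requires.
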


\begin{proof}
First of all, Lemma \ref{thm: fatou} implies that ${\mathcal J}(F)\subset p(A_\pi)\cup E$. So it only remains to show the inclusion in the opposite direction. The inclusion $E\subset {\mathcal J}(F)$ is a general fact. Recall from Proposition \ref{lem: Fn} that when $z\in {\mathbb P}^2\setminus E$ we have
\[F^{\circ n}(z)=\bigg[2^n z_0 \prod_{k=1}^n T^{\circ k} ( \T ): z_1: 2^n z_2 \prod_{k=1}^n T^{\circ k} ( \T ) + z_1 \big(1 + \sum_{k= 1} ^{n} 2^k \prod_{i=0}^{k-1} T^{\circ (n-i)} ( \T )\big)  \bigg] .\]
If $z\in p(A_\pi)$ then Theorem \ref{thm: PA} implies that $\tau(z)=\cos \theta$ for some $0\leq \theta \leq\pi$. Suppose $\frac{\theta}{\pi}$ is non-dyadic, i.e., $2^n\frac{\theta}{\pi}\notin {\mathbb Z}$ for any integer $n\geq 0$. Then by Proposition \ref{prop: Tch}, we have
\[ F^{\circ n}(z) = \bigg[z_0  \frac{\sin (2^n \theta)}{\sin \theta} : z_1: z_2 \frac{\sin (2^n \theta)}{\sin \theta} + z_1(1+ \sum_{k=1}^{n-1}  \frac{\sin(2^n \theta)}{2^{n-1}\sin (2^{k+1} \theta)})\bigg], \]
which does not converge as $n\to \infty$. We now show that there exists no open neighborhood of any point $p\in p(A_\pi)\setminus E$ on which the sequence $\{ F^{\circ n}\}$ is normal. To this end we suppose $p=[p_0: p_1: p_2]$ is an arbitrary point in $p(A_\pi) \setminus E$. Then in particular $p\notin I_1$ and consequently by Theorem \ref{thm: PA} we must have $p_1p_2\neq 0$. For any open neighborhood $V$ of this $p$ in ${\mathbb P}^2\setminus E$ there exists a small open neighborhood
$V_0$ of $p$ such that $z_1z_2\neq 0$ for all $z\in V_0\subset V$. Therefore $\tau(z)$ is holomorphic on $V_0$ and thus $\tau(V_0)$ contains an open neighborhood $V_1$ of $\tau(p)$ in $\hat{\C}$. Since $\tau(p)\in [-1, 1]$, the intersection $V_1\cap [-1, 1]$ contains an interval, say $(a, b)$. By the density of non-dyadic numbers in ${\mathbb R}$, there exists a number $\xi\in (a, b)$ such that $\frac{\cos^{-1}(\xi)}{\pi}$ is non-dyadic. Since $(a, b)\subset V_1\cap [-1, 1]  \subset \tau(V_0)$, there exists $q\in V_0$ such that $\tau(q)=\xi$. Then the sequence $\{ F^{\circ n}(q)\}$ is not convergent by the foregoing argument, and therefore the sequence $\{F^{\circ n}\}$ is not normal on $V$. This concludes that $p(A_\pi)\setminus E$ is a subset of the Julia set ${\mathcal J}(F)$. Since $E\subset {\mathcal J}(F)$, we have $p(A_\pi)\cup E\subset {\mathcal J}(F)$ and this completes the proof.
\end{proof}

The following corollary follows immediately from Theorem \ref{thm: main} and the proof of Lemma \ref{thm: fatou}.

\begin{cor}\label{thm: ff}
For the map $F$ defined in (\ref{eq: FP}) we have
\begin{enumerate}[(a)]
\item the Fatou set ${\mathcal F}(F)=p^c(A_\pi)\setminus E$;
\item the iteration sequence $\{ F^{\circ n}\}$ converges normally on ${\mathcal F}(F)$ to the function \[F_{*}([z_0: z_1: z_2]=[z_0: 0: z_2+z_1f(z)],\] where $f$ is as defined in (\ref{eq: fz}).
\end{enumerate}
\end{cor}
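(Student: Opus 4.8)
The plan is to deduce both parts formally: the corollary falls out of the Main Theorem together with the convergence already established at the end of the proof of Lemma~\ref{thm: fatou}. For part (a) I would pass to complements in the identity $\J(F)=p(A_\pi)\cup E$ of Theorem~\ref{thm: main}, using $\F(F)=\mathbb{P}^2\setminus\J(F)$ and the definition $p^c(A_\pi)=\mathbb{P}^2\setminus p(A_\pi)$:
\[
\F(F)=\mathbb{P}^2\setminus\bigl(p(A_\pi)\cup E\bigr)=\bigl(\mathbb{P}^2\setminus p(A_\pi)\bigr)\cap\bigl(\mathbb{P}^2\setminus E\bigr)=p^c(A_\pi)\setminus E,
\]
which is (a). Here $E$ really does meet $p^c(A_\pi)$ — the sets $I_n'$ with $n\ge 3$ already witness this — so the set difference is not vacuous.

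For part (b) I would combine (a) with the last step of the proof of Lemma~\ref{thm: fatou}. There, factoring $2^n\prod_{k=0}^{n-1}T^{\circ k}(\T(z))$ out of every coordinate of the closed form for $F^{\circ n}$ from Lemma~\ref{lem: Fn} gives, for $z\in p^c(A_\pi)$,
\[
F^{\circ n}(z)=\Bigl[z_0:\dfrac{z_1}{2^n\prod_{k=0}^{n-1}T^{\circ k}(\T(z))}:z_2+z_1 f_n(z)\Bigr],
\]
with $f_n$ as in~(\ref{eq: fn}); since $\T\bigl(p^c(A_\pi)\bigr)\subset\hat{\C}\setminus[-1,1]$ by Theorem~\ref{thm: PA}, Theorem~\ref{thm: Tch} forces $T^{\circ n}(\T(z))\to\infty$ normally on $p^c(A_\pi)$, so the middle coordinate tends to $0$ normally and, since $f_n\to f$ normally by~(\ref{eq: fz}), the last coordinate tends to $z_2+z_1 f(z)$ normally. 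Thus $F^{\circ n}\to F_\ast$ normally on $p^c(A_\pi)$, a fortiori on the subset $p^c(A_\pi)\setminus E=\F(F)$; matching this, via the Remark citing~\cite{Ue} (normal convergence on an open set forces it into the Fatou set), against the equicontinuity definition of $\F(F)$ yields exactly (b). The well-definedness of the limit point $F_\ast(z)=[z_0:0:z_2+z_1f(z)]$ as a genuine point of $\mathbb{P}^2$ is already recorded in the proof of Lemma~\ref{thm: fatou}.

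I do not anticipate a real obstacle; this is a genuine corollary of Theorem~\ref{thm: main} and Lemma~\ref{thm: fatou}. The only items meriting a word of care are the complement bookkeeping in (a) and the translation between ``converges normally'' and the equicontinuity formulation of the Fatou set; the latter is precisely the content of the Remark citing~\cite{Ue}, so it need only be invoked, not reproved.
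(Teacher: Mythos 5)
Your proposal is correct and follows essentially the same route as the paper, which states that the corollary follows immediately from Theorem \ref{thm: main} (complementation gives (a)) and from the normal convergence $F^{\circ n}\to F_\ast$ already established in the proof of Lemma \ref{thm: fatou} (giving (b)). The only cosmetic caveat is that the closed form from Lemma \ref{lem: Fn} is valid off $I_\infty$, so the convergence statement should be phrased on $p^c(A_\pi)\setminus E$ from the start, exactly as you do in your final restriction.
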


\subsection{The limit function $f$} Much to our surprise, the limit function $f$ defined in (\ref{eq: fz}) can be determined explicitly. We shall address this fact in this subsection. First, the following lemma is easy to prove by induction.

\begin{lem} \label{lem: lf}
For $\xi\in {\mathbb C}$ and any natural number $n\geq 2$, we have
 \[\sum_{k=1}^{n-1} \csc(2^{k}\xi)= \cot(\xi)-\cot(2^{n-1} \xi).\]
\end{lem}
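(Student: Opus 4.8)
The plan is to prove the identity $\sum_{k=1}^{n-1} \csc(2^k \xi) = \cot(\xi) - \cot(2^{n-1}\xi)$ by induction on $n$, using the basic double-angle identity for cotangent as the engine. The key observation is the telescoping-type identity $\cot(\xi) - \cot(2\xi) = \csc(2\xi)$, which follows from $\cot\xi - \cot 2\xi = \frac{\cos\xi}{\sin\xi} - \frac{\cos 2\xi}{\sin 2\xi} = \frac{2\cos^2\xi - \cos 2\xi}{\sin 2\xi} = \frac{1}{\sin 2\xi}$, where the last step uses $2\cos^2\xi - \cos 2\xi = 2\cos^2\xi - (2\cos^2\xi - 1) = 1$.

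For the base case $n = 2$, the left side is the single term $\csc(2\xi)$ and the right side is $\cot(\xi) - \cot(2\xi)$, so the identity reduces exactly to the lemma's engine identity above. For the inductive step, assume the formula holds for some $n \geq 2$. Then
\begin{align*}
\sum_{k=1}^{n} \csc(2^k \xi) &= \left(\sum_{k=1}^{n-1} \csc(2^k\xi)\right) + \csc(2^n\xi) \\
&= \big(\cot(\xi) - \cot(2^{n-1}\xi)\big) + \csc(2^n\xi).
\end{align*}
Applying the engine identity with $\xi$ replaced by $2^{n-1}\xi$ gives $\csc(2^n\xi) = \cot(2^{n-1}\xi) - \cot(2^n\xi)$, so the two middle terms telescope and we are left with $\cot(\xi) - \cot(2^n\xi)$, which is the claimed formula for $n+1$. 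This completes the induction.

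There is essentially no obstacle here; the only point requiring the slightest care is ensuring the engine identity $\cot w - \cot 2w = \csc 2w$ is recorded cleanly, since it is invoked twice (once with $w = \xi$ in the base case and once with $w = 2^{n-1}\xi$ in the inductive step), and noting that the identity is valid as an identity of meromorphic functions on $\mathbb{C}$, so the finitely many poles coming from zeros of the sines do not cause trouble for the generic $\xi$ to which the lemma will later be applied (namely those with $\tau(z) \notin [-1,1]$, where all the relevant sines are nonzero). I would state and prove the engine identity as a one-line preliminary computation, then present the two-line induction.
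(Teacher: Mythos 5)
Your proof is correct and follows exactly the route the paper intends: the paper states the lemma "is easy to prove by induction" without giving details, and your telescoping identity $\cot w - \cot 2w = \csc 2w$ together with the induction is precisely that argument. No issues.
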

For simplicity, we set $S=\phi(S_0)=\{z\in {\mathbb P}^2: z_0^2-z_1^2-z_2^2=0\}$.
\begin{thm}\label{thm: ef}
Let $f$ be the limit function defined in (\ref{eq: fz}). Then
\[f(z)=\tau(z)-i\sqrt{1-\tau^2(z)},\ \ z\in p^c(A_\pi)\setminus E.\]
In particular, it can be extended holomorphically to ${\mathbb P}^2\setminus S$.
\end{thm}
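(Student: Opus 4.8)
The plan is to compute the limit $f(z)=\lim_n f_n(z)$ directly by evaluating the finite sums $f_n$ in closed form using the trigonometric identity of Lemma~\ref{lem: lf}, and then passing to the limit. Write $\xi = \cos^{-1}(\tau(z))$, so that $\tau(z)=\cos\xi$ and, by the semi-conjugacy relation, $T^{\circ k}(\tau(z)) = \cos(2^k\xi)$. Using Proposition~\ref{prop: Tch}(b), the product $\prod_{k=0}^{j-1}T^{\circ k}(\tau(z))$ in the denominator of the $j$-th term of $f_n$ equals $\frac{\sin(2^j\xi)}{2^j\sin\xi}$ (at least when $\xi/\pi\notin\mathbb Z$, which one should record as the generic case; the exceptional locus lies in a proper analytic subset, so holomorphicity will let us extend). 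Substituting this into the definition~(\ref{eq: fn}) of $f_n$ telescopes the $2^j$ factors, giving
\[
f_n(z) = \sum_{j=1}^{n-1}\frac{2^j\sin\xi}{2^j\sin(2^j\xi)} = \sin\xi\sum_{j=1}^{n-1}\csc(2^j\xi).
\]

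Now Lemma~\ref{lem: lf} evaluates this sum as $\sin\xi\,\bigl(\cot\xi - \cot(2^{n-1}\xi)\bigr) = \cos\xi - \sin\xi\,\cot(2^{n-1}\xi)$. The first term is exactly $\tau(z)$. For the second term, recall that for $z\in p^c(A_\pi)$ we have $\tau(z)\notin[-1,1]$, hence $\xi$ is genuinely complex and, crucially, $\lvert T^{\circ n}(\tau(z))\rvert = \lvert\cos(2^n\xi)\rvert \to \infty$ by Theorem~\ref{thm: Tch}. I would show this forces $\cot(2^{n-1}\xi)\to \pm i$ (the sign being determined by which half-plane $\xi$ lies in, equivalently by the choice of branch of $\sqrt{1-\tau^2}$): writing $w = 2^{n-1}\xi$, one has $\cot w = i\frac{e^{2iw}+1}{e^{2iw}-1}$, and $\lvert\cos w\rvert\to\infty$ forces $\lvert\mathrm{Im}\,w\rvert\to\infty$, so $e^{2iw}\to 0$ or $\infty$ accordingly, whence $\cot w\to -i$ or $+i$. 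Thus $\sin\xi\,\cot(2^{n-1}\xi)\to i\sin\xi = i\sqrt{1-\cos^2\xi} = i\sqrt{1-\tau^2(z)}$ for the appropriate branch, and therefore $f(z) = \tau(z) - i\sqrt{1-\tau^2(z)}$.

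For the final sentence, observe that the right-hand side $\tau(z) - i\sqrt{1-\tau^2(z)}$ is manifestly holomorphic wherever $\tau$ is holomorphic and $1-\tau^2(z)\neq 0$. Since $\tau$ is holomorphic on $\mathbb P^2\setminus S$ (as $\tau(z) = \frac{z_0^2-z_1^2-z_2^2}{2z_1z_2}$ and the denominator vanishing together with the numerator defines points of $S$; more carefully, $\tau$ is holomorphic precisely off the locus $z_0^2-z_1^2-z_2^2 = 0$ which is $S$), and since $1-\tau^2(z) = 0$ forces $\tau(z)=\pm 1$, i.e. $z_0^2-z_1^2-z_2^2 = \pm 2z_1z_2$, i.e. $z_0^2 = (z_1\pm z_2)^2$ — one checks these are points of $p(A_\pi)$ already excluded — the formula defines a holomorphic function on $\mathbb P^2\setminus S$ agreeing with $f$ on $p^c(A_\pi)\setminus E$, which is the desired extension.

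\textbf{Main obstacle.} The delicate point is the branch-of-square-root bookkeeping: one must verify that the sign in $\cot(2^{n-1}\xi)\to \pm i$ is consistent as $z$ varies over a connected component of $p^c(A_\pi)\setminus E$, and that it matches a single-valued holomorphic branch of $\sqrt{1-\tau^2(z)}$ there. Establishing this cleanly will likely require pinning down $\xi = \cos^{-1}(\tau(z))$ as a specific holomorphic branch on $p^c(A_\pi)$ (using that $\tau(z)$ avoids $[-1,1]$, so $\cos^{-1}$ has a well-defined holomorphic branch with $\mathrm{Im}\,\xi$ of constant sign on each component), after which the limit of $\cot(2^{n-1}\xi)$ is forced and the identification with $-i\sqrt{1-\tau^2(z)}$ is unambiguous. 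The telescoping computation itself and the appeal to Theorem~\ref{thm: Tch} are routine by comparison.
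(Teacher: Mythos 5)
Your telescoping computation is exactly the paper's route: rewrite each product $\prod_{k=0}^{j-1}T^{\circ k}(\tau(z))$ via Proposition \ref{prop: Tch}(b), sum with Lemma \ref{lem: lf} to get $f_n(z)=\cos\xi-\sin\xi\,\cot(2^{n-1}\xi)$, and use Theorem \ref{thm: Tch} to force $\cot(2^{n-1}\xi)\to\pm i$. But the step you defer as the ``main obstacle'' --- deciding that the limit is $-i\sqrt{1-\tau^2(z)}$ rather than $+i\sqrt{1-\tau^2(z)}$ --- is precisely the content of the theorem (the minus sign is what later gives $f(p(A_\pi))=\mathbb{T}$ and $f=-i$ on $S$), and your proposal does not actually supply an argument for it. You only indicate that one could pin down a holomorphic branch of $\cos^{-1}\tau$ with $\operatorname{Im}\xi$ of constant sign on each component of $p^c(A_\pi)\setminus E$; as written this is incomplete, since the sign obtained that way could a priori differ from component to component, and nothing in the sketch ties it to a single, globally specified branch of $\sqrt{1-\tau^2}$ as asserted in the statement. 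So there is a genuine gap at the one non-routine point.

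The paper closes this with a short normalization argument you could adopt: directly from the defining series (\ref{eq: fn})--(\ref{eq: fz}) one has
\[
f(z)=\frac{1}{2\tau(z)}\Big(1+\frac{1}{2T(\tau(z))}+\frac{1}{2^2T(\tau(z))T^{\circ 2}(\tau(z))}+\cdots\Big),
\]
so $f$ vanishes at points of $p^c(A_\pi)$ with $z_1z_2=0$, i.e. where $\tau(z)=\infty$. Since $\tau\pm\sqrt{\tau^2-1}$ tends to $0$ for one branch and to $\infty$ for the other as $\tau\to\infty$, the vanishing of $f$ there forces $s=i$ and hence $f=\tau-i\sqrt{1-\tau^2}$; adding this (or any complete sign determination) would repair your argument, whose telescoping part is otherwise fine and identical to the paper's. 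One further small correction: in your extension argument you dismiss the locus $1-\tau^2(z)=0$ as ``points of $p(A_\pi)$ already excluded,'' but the claimed extension is to ${\mathbb P}^2\setminus S$, which does contain the locus $\tau(z)=\pm 1$; the paper extends simply by holomorphy of $\tau$ on ${\mathbb P}^2\setminus S$, so you should not argue by excluding those points.
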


\begin{proof} 
Recall that since
\[\cos \xi=\frac{1}{2}(e^{i\xi}+e^{-i\xi}),\ \ \ \ \sin \xi=\frac{1}{2i}(e^{i\xi}-e^{-i\xi})\]
are entire functions on $\C$, the function $\cos \xi$ has a local inverse function $\cos^{-1}$ when $\xi$ is not an integer multiple of $\pi$. Moreover, since trigonometric identities of $\sin x$ and $\cos x$ hold for complex numbers $x$, it is not hard to check that equality (\ref{eq: sine}) holds for all complex numbers $\xi$ for which $\cos^{-1} \xi$ is well-defined. Suppose $z\in p^c(A_\pi)\setminus E$ is such that $\frac{\theta(z)}{\pi}:=\frac{\cos^{-1}(\tau(z))}{\pi}$ is non-dyadic. Then the functions $f_n$ defined in (\ref{eq: fn}) can be written as 
\[f_n(z)=\sum_{k=1}^{n-1}\frac{\sin\left(\theta(z)\right)}{\sin\left(2^{k}\theta(z)\right)},\ \ n\geq 2.\]
And hence by Lemma \ref{lem: lf} we have
\begin{equation} \label{eq: fi}
\frac{f_{n}(z)}{\sin(\theta(z))}=\sum_{k=1}^{n-1} \csc(2^{k}\theta(z))= \cot(\theta(z))-\cot(2^{n-1} \theta(z)).
\end{equation}
It is established in Section 5.1 that the sequence $\{f_n\}$ converges normally as $n\to \infty$ on $p^c(A_\pi)\setminus E$. Hence the sequence $\{\cot(2^n\theta(z))\}$ converges normally. To determine the limit of this sequence, we observe through Equality (\ref{eq: sine}) that 
\[ \frac{ \sin\theta(z)}{\sin \left(2^{n+1}\theta(z)\right)}=\frac{1}{2^{n+1}\prod_{j=0}^{n} T^{\circ j}(\tau(z))},\ \ n\geq 2.\]
Since $z\in p^c(A_\pi)$, we have $\tau(z)\in \hat{\C}\setminus [-1,1]$ and consequently $T^{\circ j}(\tau(z))\to \infty$ as $j\to \infty$ in view of Theorem \ref{thm: Tch}. It follows that $\sin \left(2^{n+1}\theta(z)\right)\to \infty$ as $n\to \infty$. Then 
\begin{align*}
s:=\lim_{n\to \infty}\cot (2^n\theta(z))&=\lim_{n\to \infty}\frac{\pm \sqrt{1-\sin^2(2^n\theta(z))}}{\sin(2^n\theta(z))}\\
&=\pm i.
\end{align*}
Of course, we need to determine whether $s=i$ or $s=-i$. Letting $n\to \infty$ in Equation (\ref{eq: fi}),
we have 
\begin{align*}f(z)&=\cos(\theta(z))-s\sin(\theta(z))\\
&=\tau(z)-s\sqrt{1-\tau^2(z)}=\tau(z)\pm \sqrt{\tau^2(z)-1}.
\end{align*}
On the other hand, since 
\begin{align*}
f(z) &= \lim_{n \to \infty} f_n(z) \\
&= \frac{1}{2 \T(z)} \bigg( 1 + \frac{1}{2 T(\T(z))} + \frac{1}{2^2 T(\T(z)) T^2(\T(z))} + \cdots \bigg),
\end{align*}
and $\tau(z)=\infty$ at points $z\in p^c(A_\pi)$ such that $z_1z_2=0$, we must have $f(z)=0$ at points $z$ where $\tau(z)=\infty$. This concludes that $s=i$ and hence 
\[f(z)=\tau(z)- i\sqrt{1-\tau^2(z)}.\]

Furthermore, since in view of its definition $\tau$ is holomorphic on ${\mathbb P}^2\setminus S$, the function $f$ extends holomorphically to ${\mathbb P}^2\setminus S$. Further, for points $z\in S$ one has $\tau(z)=0$ and therefore $f(z)=-i$. This indicates that $f$ can be extended to the entire ${\mathbb P}^2$.
\end{proof}

\Rem Some observations about the function $f$ are worth mentioning.

(1) Since $\tau(z)=\infty$ for $z\in S^c\cap \{z_1z_2=0\}$, the function $f$ vanishes there and it has no other zeros by Theorem \ref{thm: ef}.

(2) For every $z\in S$ we have $\tau(z)=\infty$ by definition, and hence $f(z)=-i$. This means that $f$ is well-defined on the entire ${\mathbb P}^2$. However, as indicated in Section 3.2, if one consider the sequence $\xi_n=[1+\frac{1}{n}:0:1], n\geq 1$ then
$\tau(\xi_n)=\infty$ and hence $f(\xi_n)=0$. But $f([1:0:1])=-i$. This indicates that $f$ can have discontinuities at points in $S\cap \{z_1z_2=0\}$. 

(3) In the case $z\in S_{\cos \theta}\subset p(A_\pi)$, Theorem \ref{thm: PA} implies that $\tau(z)=\cos \theta$ for some $\theta \in [0, 2\pi)$ and hence $f(z)=cos\theta -i\sin \theta=e^{-\theta i}$. In particular, this shows that $f(p(A_\pi))={\mathbb T}$. Moreover, this is consistent with the earlier fact that when $z\in S$ we have $f(z)=-i=e^{-\pi i/2}$. This observation leads to the following corollary.

\begin{cor}
Let $f$ be defined as in Theorem \ref{thm: ef}. Then $f^{-1}(\mathbb T) =p(A_\pi)$.
\end{cor}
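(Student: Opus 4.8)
The plan is to prove the two inclusions $p(A_\pi)\subseteq f^{-1}(\mathbb{T})$ and $f^{-1}(\mathbb{T})\subseteq p(A_\pi)$ separately; the first is essentially already recorded, and the second rests on a single algebraic identity relating $f$ and $\tau$. For $p(A_\pi)\subseteq f^{-1}(\mathbb{T})$ there is nothing new to do: if $z\in p(A_\pi)$ then Theorem~\ref{thm: PA} gives $\tau(z)=\cos\theta$ for some real $\theta$, and Theorem~\ref{thm: ef} then yields $f(z)=\cos\theta-i\sin\theta=e^{-i\theta}\in\mathbb{T}$ — this is precisely the computation in item~(3) of the remark following Theorem~\ref{thm: ef}, which already states $f(p(A_\pi))=\mathbb{T}$.

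For the reverse inclusion it suffices to show that $z\in p^c(A_\pi)$ forces $f(z)\notin\mathbb{T}$. The engine is the polynomial identity
\[
f(z)^2-2\tau(z)f(z)+1=0\qquad\text{for all }z\in\Omega:=\mathbb{P}^2\setminus\big(S\cup\{z_1=0\}\cup\{z_2=0\}\big).
\]
I would first verify this algebraically on $p^c(A_\pi)\setminus E$ by substituting the explicit formula $f=\tau-i\sqrt{1-\tau^2}$ from Theorem~\ref{thm: ef}: the cross terms cancel and $(i\sqrt{1-\tau^2})^2=\tau^2-1$, leaving $0$. The left-hand side is holomorphic on $\Omega$, since on $\Omega$ the function $\tau$ is holomorphic and finite while $f$ is holomorphic by the extension in Theorem~\ref{thm: ef} (as $\Omega\subseteq\mathbb{P}^2\setminus S$); moreover $\Omega$ is connected, being the complement of a hypersurface in $\mathbb{P}^2$. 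Since the identity already holds on the nonempty open subset $\big(p^c(A_\pi)\setminus E\big)\cap\{z_1z_2\neq0\}$ of $\Omega$ — which contains $[1:1:3]$ in view of (\ref{eq: P-E}) — the identity theorem propagates it to all of $\Omega$.

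Granting the identity, suppose $z\in p^c(A_\pi)$ and $f(z)\in\mathbb{T}$, and derive a contradiction. If $z_1z_2=0$, then $z\notin S$ (because $S\subseteq p(A_\pi)$ by Theorem~\ref{thm: PA}), so $\tau(z)=\infty$ and $f(z)=0$ by item~(1) of the same remark, contradicting $f(z)\in\mathbb{T}$. If $z_1z_2\neq0$, then again $z\notin S$, so $z\in\Omega$; dividing the identity by $f(z)$ — which is nonzero, as the product of the roots is $1$ — gives $\tau(z)=\tfrac12\big(f(z)+f(z)^{-1}\big)$, and writing $f(z)=e^{i\psi}$ we obtain $\tau(z)=\cos\psi\in[-1,1]$. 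But $z\in p^c(A_\pi)$ forces $\tau(z)\notin[-1,1]$ by Theorem~\ref{thm: PA}, a contradiction. Hence $f^{-1}(\mathbb{T})\subseteq p(A_\pi)$, and combining the two inclusions completes the proof.

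The main obstacle is the step that upgrades the algebraic identity from $p^c(A_\pi)\setminus E$ to all of $\Omega$: one must pin down the correct domain $\Omega$ on which both $\tau$ and the extended $f$ are genuine (finite, holomorphic) functions, confirm that $\Omega$ is connected, and check that the locus where the identity is already known is a genuinely nonempty open subset of $\Omega$ rather than a lower-dimensional piece — which is exactly why the concrete point $[1:1:3]$ from (\ref{eq: P-E}) is useful. Once this is in place, the case analysis $z_1z_2=0$ versus $z_1z_2\neq0$ and the final trigonometric identification $\tfrac12(f+f^{-1})=\cos\psi$ are routine.
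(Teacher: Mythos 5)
Your proposal is correct, and its first inclusion is exactly the paper's (quoting item (3) of the remark after Theorem \ref{thm: ef}). For the reverse inclusion the paper argues directly rather than contrapositively: given any $z$ with $f(z)\in\mathbb{T}$, it uses $\tau^2(z)+\bigl(\sqrt{1-\tau^2(z)}\bigr)^2=1$ to write $\tau(z)=\cos\theta$ for a \emph{complex} $\theta$, so that $f(z)=e^{-i\theta}$; then $|f(z)|=1$ forces $\operatorname{Im}\theta=0$, hence $\tau(z)=\cos x\in[-1,1]$ and $z\in p(A_\pi)$ by Theorem \ref{thm: PA}. Your route reaches the same trigonometric core ($\tau=\tfrac12(f+f^{-1})$, $f=e^{i\psi}$ with $\psi$ real, so $\tau=\cos\psi$) but packages it through the quadratic relation $f^2-2\tau f+1=0$, propagated from $\bigl(p^c(A_\pi)\setminus E\bigr)\cap\{z_1z_2\neq0\}$ to the connected set $\Omega$ by the identity theorem, together with a case analysis on $z_1z_2=0$ (where $f=0\notin\mathbb{T}$) and on $S$ (contained in $p(A_\pi)$). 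What your version buys is rigor at exactly the points the paper glosses: it is insensitive to which branch of $\sqrt{1-\tau^2}$ the holomorphic extension of $f$ to $\mathbb{P}^2\setminus S$ actually uses, and it handles the degenerate loci explicitly; the paper's version is shorter but tacitly assumes the formula $f=\tau-i\sqrt{1-\tau^2}$ (with the same branch) holds pointwise wherever it is applied. Both rest on the same two ingredients, Theorem \ref{thm: ef} and the characterization of $p(A_\pi)$ via $\tau(z)\in[-1,1]$, so the proofs are equivalent in substance.
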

\begin{proof}
We have observed in Remark 5.15 that $p(A_\pi)\subset f^{-1}(\mathbb T)$. It is left to show the inclusion in the other direction. Suppose $z\in {\mathbb P}^2$ such that $f(z)\in {\mathbb T}$. Since 
\[\tau^2(z)+\left(\sqrt{1-\tau^2(z)}\right)^2=1,\]
there exists a complex number $\theta$ such that $\tau(z)=\cos\theta$. And therefore one can write
$f(z)=e^{-i\theta}$. Write $\theta=x+yi$ for some $x, y\in {\mathbb R}$. Then the fact that $|f(z)|=1$ implies $y=0$, and consequently $\tau(z)=\cos x\in [-1, 1]$, i.e., one has $z\in p(A_\pi)$.
\end{proof}

\section{Some discussions on the group $\mathcal G$}

Now we turn our attention to the Grigorchuk group ${\mathcal G}$ and its Koopman representation $\pi$ on the boundary $\partial T$ of the rooted binary tree. Much study of the group's spectral properties has been done, for instance in \cite{BG, DG, GLS, GNS2}. In particular, the classical spectrum of the pencil $\pi(ta+ub+vc+wd)$, where $t, u, v, w\in {\mathbb R}$, was studied and found to be related to the Cantor set (\cite{GLS}, Theorem 4.2). Since the pencil \[R_\pi(z) = z_0I +z_1\pi(a)+z_2\pi(b)+z_3\pi(c)+z_4\pi(d),\ \ z\in \C^5\]
is more general, it is advantageous to determine the projective spectrum $P(R_\pi)$. This goal still evades us at this point, which prevented us from determining the whole Julia set of the map $G$ defined in (\ref{eq: G(z)}). Nevertheless, there is a connection between $\mathcal G$ and $D_\infty$ which has allowed us a chance for progress. 

\subsection{A partial result about $P(R_\pi)$}
Consider the element $u=\frac{1}{2}(b+c+d-1)$ in the group algebra $\C{[\mathcal G}]$. One checks easily that $u^2=1$. It is shown in \cite{GY} that the projective spectrum of the pencil $\tilde{R}_\pi(z)=z_0I+z_1\pi(a)+z_2\pi(u)$ of the group $\mathcal G$ and that of the pencil $A_\pi(z)=z_0I+z_1\pi(a)+z_2\pi(t)$ of $D_\infty$ are identical, i.e., we have
\begin{equation}
\label{eq: pu}
p(\tilde{R}_\pi)=\bigcup_{-1\leq x \leq 1}\{z\in \mathbb{P}^2 \mid  z_0^2-z_1^2-z_2^2-2z_1z_2x=0\}.
\end{equation}
An interesting application of this result is given in the following example.

\begin{Ex}
The operator $H_\pi=\pi(a+b+c+d)$ is often called the Hecke type operator for $\mathcal G$ with respect to the Koopman representation $\pi$. The classical spectrum $\sigma(H_\pi)$ was determined in \cite{BG} to be $[-2, 0]\cup [2, 4]$. If we write 
\[H_\pi-\zeta I=(1-\zeta)I+\pi(a)+2\pi(u),\]
then the spectrum $\sigma(H_\pi)$ can also be determined by (\ref{eq: pu}). Indeed, substituting $(z_0, z_1, z_2)$ by $(1-\zeta, 1, 2)$ one has
\[(1-\zeta)^2-5-4x=0,\ \ x\in [-1, 1],\]
and the range for $\zeta$ is easily determined to be $[-2, 0]\cup [2, 4]$.
\end{Ex}

Now we consider the subspace $\hat{M} = \{ z \in \mathbb{C}^5 \mid z_2=z_3=z_4 \}$ and define the map ${X}: \mathbb{\C}^3 \to \hat{M} $ by
\begin{equation}\label{eq: X}
(w_0, w_1, w_2) \to \big( w_0 - \frac{w_2}{2}, w_1, \frac{w_2}{2}, \frac{w_2}{2}, \frac{w_2}{2}\big).
\end{equation}
One sees that $X$ is an isomorphism between the two vector spaces.
Using the fact that $u=\frac{1}{2}(b+c+d-1)$ one easily verifies that the pencils $A_\pi$ for $D_\infty$ and $R_\pi$ for ${\mathcal G}$ are related by $A_\pi(w)=R_\pi(X(w)),\ w\in \C^3$. 
\begin{cor}\label{eq: spec}
A point $w$ is in $P(A_\pi)$ if and only if $X(w)$ is in $P(R_\pi)$, i.e., \[X(P(A_\pi))=P(R_\pi)\cap \hat{M}.\]
\end{cor}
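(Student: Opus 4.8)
The plan is to leverage the identity $A_\pi(w) = R_\pi(X(w))$ established just before the statement, together with the fact that $X$ is a linear isomorphism from $\C^3$ onto the subspace $\hat M = \{z \in \C^5 \mid z_2 = z_3 = z_4\}$. The key observation is that invertibility of $A_\pi(w)$ as an operator and invertibility of $R_\pi(X(w))$ as an operator are literally the same statement, since the two operators are equal. So the definition of the projective spectrum does all the work.

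First I would recall that $w \in P(A_\pi)$ means precisely that $A_\pi(w)$ fails to be invertible in $\B(\hh)$. Since $A_\pi(w) = R_\pi(X(w))$ by the relation noted above (using $u = \tfrac12(b+c+d-1)$ so that $z_0 I + z_1 \pi(a) + z_2 \pi(u) = z_0 I + z_1\pi(a) + \tfrac{z_2}{2}(\pi(b)+\pi(c)+\pi(d)-I)$ matches the components of $X(w)$), this is equivalent to $R_\pi(X(w))$ failing to be invertible, i.e.\ to $X(w) \in P(R_\pi)$. This proves the ``if and only if'' directly, with no estimates or dynamics needed.

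For the displayed set equality, I would argue by double inclusion. If $w \in P(A_\pi)$ then $X(w) \in P(R_\pi)$ by the first part, and $X(w) \in \hat M$ by construction of $X$, so $X(w) \in P(R_\pi) \cap \hat M$; this gives $X(P(A_\pi)) \subseteq P(R_\pi) \cap \hat M$. Conversely, if $z \in P(R_\pi) \cap \hat M$, then since $X: \C^3 \to \hat M$ is onto, write $z = X(w)$ for a (unique) $w \in \C^3$; the first part then forces $w \in P(A_\pi)$, so $z \in X(P(A_\pi))$. Hence equality holds.

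There is essentially no hard step here: the content is entirely in the algebraic identity $u^2 = 1$ and the matching $A_\pi(w) = R_\pi(X(w))$, both of which are quoted from \cite{GY} and stated in the excerpt. The only point requiring a line of care is verifying that $X$ indeed lands in $\hat M$ and is bijective onto it, which is immediate from the formula \eqref{eq: X} (its inverse sends $(z_0,z_1,z_2,z_2,z_2)$ to $(z_0+z_2,\,z_1,\,2z_2)$). So the ``main obstacle,'' such as it is, is simply bookkeeping to confirm the coordinate substitution $(z_0,z_1,z_2) \mapsto (z_0 - \tfrac{z_2}{2}, z_1, \tfrac{z_2}{2},\tfrac{z_2}{2},\tfrac{z_2}{2})$ is consistent with $u = \tfrac12(b+c+d-1)$.
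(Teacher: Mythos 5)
Your proposal is correct and follows essentially the same route as the paper: the corollary is an immediate consequence of the identity $A_\pi(w)=R_\pi(X(w))$ stated just before it (which rests on $u=\tfrac12(b+c+d-1)$, $u^2=1$ and the identification of the two pencils from \cite{GY}) together with the fact that $X$ is a linear isomorphism of $\C^3$ onto $\hat M$. Your double-inclusion bookkeeping, including the explicit inverse of $X$, is exactly the content the paper leaves implicit.
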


Now going back to the fixed points of the map $G$ in the projective space as described in Proposition \ref{prop: Gfix} (c), one can use Corollary \ref{eq: spec} to verify that the set $\phi(Y_1)$ and the point $[-1:-2:1:1:1]$ are inside the spectrum $p(R_\pi)$. The connection between $\phi(Y_2)$ and $p(R_\pi)$ is not clear at this point.

\subsection{A partial result about ${\mathcal J}(G)$}
Interestingly, the map $X$ above also furnishes a semi-conjugacy between the map $F$ in (\ref{eq: F}) associated with the dihedral group and a part of the map $G$ in (\ref{eq: G(z)}) associated with the Grigorchuk group. 

\begin{prop}\label{prop: FG} The following diagram is commutative:
\begin{displaymath}
  \xymatrix{
    {\mathbb{\C}^3 }\ar[r]^{F}  \ar[d]^{{X}}
    &  {\mathbb{\C}^3}  \ar[d]^{{X}} \\
    \hat{M} \ar[r]^{G} 
    & \hat{M}.
  }
\end{displaymath}
\end{prop}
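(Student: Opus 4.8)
The plan is to verify the commutativity of the diagram by direct computation, checking that $X(F(w)) = G(X(w))$ for every $w = (w_0, w_1, w_2) \in \C^3$. Since both $F$ and $G$ are given by explicit polynomial formulas and $X$ is the explicit linear isomorphism in (\ref{eq: X}), this is in principle a routine verification; the point is to organize it so that the algebra does not become unwieldy.

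First I would compute $X(w) = (w_0 - \tfrac{w_2}{2},\, w_1,\, \tfrac{w_2}{2},\, \tfrac{w_2}{2},\, \tfrac{w_2}{2})$ and then evaluate $G$ on this point. Recall $G(z) = (z_0\alpha - z_1^2(z_0+z_4),\, z_1^2(z_2+z_3),\, z_4\alpha,\, z_2\alpha,\, z_3\alpha)$ with $\alpha(z) = (z_0+z_4)^2 - (z_2+z_3)^2$. For $z = X(w)$ one has $z_0 + z_4 = w_0$ and $z_2 + z_3 = w_2$, so $\alpha(X(w)) = w_0^2 - w_2^2$. The last three coordinates of $G(X(w))$ are then all equal to $\tfrac{w_2}{2}(w_0^2 - w_2^2)$, and the second coordinate is $w_1^2 w_2$. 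This already matches the shape of a point in $\hat M$ whose preimage under $X$ should have third coordinate $w_2(w_0^2 - w_2^2)$: indeed the second component of $F(w)$ is $w_1^2 w_2$ and the third is $w_2(w_0^2 - w_2^2)$, and $X$ sends third coordinate $c$ to the last three entries each being $c/2$ — consistent with $\tfrac{w_2}{2}(w_0^2-w_2^2)$.

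Next I would check the first coordinate, which is the only one requiring genuine simplification. The first coordinate of $G(X(w))$ is
\[
\Big(w_0 - \tfrac{w_2}{2}\Big)(w_0^2 - w_2^2) - w_1^2 w_0.
\]
On the other side, the first coordinate of $F(w)$ is $w_0(w_0^2 - w_1^2 - w_2^2)$, and applying $X$ subtracts half the third coordinate of $F(w)$, giving
\[
w_0(w_0^2 - w_1^2 - w_2^2) - \tfrac{1}{2} w_2(w_0^2 - w_2^2).
\]
Expanding both expressions, the first equals $w_0^3 - w_0 w_2^2 - \tfrac{w_2}{2}(w_0^2 - w_2^2) - w_1^2 w_0$ and the second equals $w_0^3 - w_0 w_1^2 - w_0 w_2^2 - \tfrac{w_2}{2}(w_0^2 - w_2^2)$; these agree. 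Hence $X(F(w)) = G(X(w))$ for all $w \in \C^3$, which is the asserted commutativity.

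There is no real obstacle here — the statement is a compatibility identity between two explicitly defined polynomial maps and an explicit linear coordinate change, and the verification reduces to matching a handful of low-degree polynomials coordinate by coordinate. The only mild care needed is bookkeeping: ensuring that the substitution $z_0 + z_4 = w_0$, $z_2 + z_3 = w_2$ is used consistently so that $\alpha(X(w))$ collapses to $w_0^2 - w_2^2$, after which everything falls out. One may additionally remark that, since all maps involved are homogeneous, the diagram descends to one between the corresponding projective spaces, which is the form in which it is used for the Julia-set discussion of $G$.
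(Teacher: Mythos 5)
Your verification is correct and follows essentially the same route as the paper: compute $\alpha(X(w)) = w_0^2 - w_2^2$, evaluate $G(X(w))$ coordinatewise, and match it against $X(F(w))$ by direct expansion. The paper's proof is exactly this computation (it simplifies the first coordinate of $G(X(w))$ to $w_0(w_0^2-w_1^2-w_2^2) - \tfrac{w_2}{2}(w_0^2-w_2^2)$ and leaves the final comparison as a routine check), so nothing further is needed.
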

\begin{proof} 
Recall that for $z\in \C^5$ the function $\alpha(z)=(z_0+z_4)^2-(z_2+z_3)^2$. Hence for $(w_0, w_1, w_2)\in \C^3$, we have
\[\alpha ({X}(w))=(w_0 - \frac{w_2}{2}+\frac{w_2}{2})^2 - (w_2)^2 = w_0^2-w_2^2.\]
Hence 
\begin{align*}
G\left({X}(w)\right) =& G\left( w_0 -\frac{w_2}{2}, w_1, \frac{w_2}{2},\frac{w_2}{2},\frac{w_2}{2}\right)\\
=& \left( w_0 (w_0^2-w_1^2-w_2^2) - \frac{w_2}{2} (w_0^2-w_2^2),  w_1^2w_2, \frac{w_2}{2}\alpha, \frac{w_2}{2} \alpha, \frac{w_2}{2} \alpha \right).
\end{align*}
On the other hand, since 
\[F(w) = \big( w_0(w_0^2-w_1^2-w_2^2), w_1^2 w_2, w_2 (w_0^2-w_2^2) \big),\]
one verifies by direct computation that ${X}\left(F(w)\right)=G\left({X}(w)\right)$.
\end{proof}
Proposition \ref{prop: FG} implies that $F(w)={\bf 0}$ if and only if $X(F(w))={\bf 0}$ and thus if and only if $G(X(w))={\bf 0}$. Note that here ${\bf 0}$ refers to the origin of $\C^3$ as well as $\C^5$. Thus when $X$, $F$, and $G$ are considered as maps in projective space and $M=\phi(\hat{M})$, one has the following fact.

\begin{cor} Let $I_n(F)$ and $I_n(G)$ be the $n$-th indeterminacy set for $F$ and $G$, respectively. Then the following diagram commutes for every $n\geq 1:$
\label{eq: GM}
\begin{displaymath}
  \xymatrix{
    {\mathbb{P}^2 \setminus I_n(F)} \ar[r]^{\ \ \ \ \ F}  \ar[d]^{X}
    &  {\mathbb{P}^2}  \ar[d]^{X} \\
   M\setminus I_n(G) \ar[r]^{\ \ \ \ \ G} 
    & M .
  }
\end{displaymath}
\end{cor}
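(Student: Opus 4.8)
The plan is to deduce the corollary from Proposition \ref{prop: FG} together with the definition of the indeterminacy sets $I_n$ and the observation, already recorded right after Proposition \ref{prop: FG}, that $F(w) = \mathbf{0}$ if and only if $G(X(w)) = \mathbf{0}$. Since $X$ is a linear isomorphism of vector spaces $\C^3 \to \hat M$, it descends to a biholomorphism of projective spaces $\phi(\C^3) = \mathbb{P}^2 \to M = \phi(\hat M)$, and this is the map denoted $X$ in the diagram. The commutativity of the square as a diagram of maps $\mathbb{P}^2 \dashrightarrow M$ (wherever both composites are defined) is just the projectivization of Proposition \ref{prop: FG}; the real content is that the domains match up, i.e. $X$ carries $\mathbb{P}^2 \setminus I_n(F)$ exactly onto $M \setminus I_n(G)$.

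First I would treat the base case $n = 1$. By Definition 4.8 (the $n=1$ case) one has $I_1(F) = \{w \in \mathbb{P}^2 : F(w) = \mathbf 0\}$ and $I_1(G) = \{v \in \mathbb{P}^4 : G(v) = \mathbf 0\}$; restricting the latter to $M$, a point $v = X(w) \in M$ lies in $I_1(G)$ iff $G(X(w)) = \mathbf 0$, which by the remark following Proposition \ref{prop: FG} happens iff $F(w) = \mathbf 0$, i.e. iff $w \in I_1(F)$. Hence $X(I_1(F)) = I_1(G) \cap M$, so $X$ restricts to a bijection $\mathbb{P}^2 \setminus I_1(F) \to M \setminus I_1(G)$, and on this set $G \circ X = X \circ F$ by Proposition \ref{prop: FG}; this is the $n=1$ square.

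Next I would induct on $n$. Recall $I_{n+1}(F) = \{w : F(w) \in I_n(F)\} \cup I_n(F)$, or more precisely that a point of $\mathbb{P}^2$ fails to be in $I_{n+1}(F)$ iff $w \notin I_1(F)$ and $F(w) \notin I_n(F)$. Suppose the claim holds for $n$. Take $w \in \mathbb{P}^2 \setminus I_{n+1}(F)$. Then $w \notin I_1(F)$, so $X(w) \notin I_1(G)$ by the base case, and $F(w) \notin I_n(F)$; applying the inductive hypothesis to $F(w)$ gives $X(F(w)) \notin I_n(G)$, and by Proposition \ref{prop: FG} $X(F(w)) = G(X(w))$, so $G(X(w)) \notin I_n(G)$. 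Therefore $X(w) \notin I_{n+1}(G)$, and since $X(w) \in M$ it lies in $M \setminus I_{n+1}(G)$. The reverse inclusion runs identically using that $X$ is a bijection onto $M$, giving $X(\mathbb{P}^2 \setminus I_{n+1}(F)) = M \setminus I_{n+1}(G)$; combined with the projectivized Proposition \ref{prop: FG} this yields the $(n+1)$-square and closes the induction.

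The one point that needs a little care — and the place I would expect a referee to press — is the interaction between $I_n(G)$ as a subset of all of $\mathbb{P}^4$ and its intersection with the invariant subvariety $M$: one must check that $G$ maps $M$ into $M$ (which is exactly Proposition \ref{prop: FG}, since $G(X(w)) = X(F(w)) \in \hat M$) so that the orbit-based definition of $I_n(G)$ restricted to $M$ only ever sees points of $M$, and hence that "a point of $M$ is an indeterminacy point of $G^{\circ n}$" can be tested entirely within $M$. Granting that, the induction above is routine bookkeeping; no genuinely hard estimate is involved, and the substance of the corollary is entirely carried by Proposition \ref{prop: FG}.
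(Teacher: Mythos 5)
Your proposal is correct and follows the same route as the paper: the paper also deduces the corollary directly from Proposition \ref{prop: FG} together with the observation that $F(w)=\mathbf{0}$ iff $G(X(w))=\mathbf{0}$, leaving the bookkeeping implicit. Your explicit induction showing $X(I_n(F)) = I_n(G)\cap M$ is just a careful write-up of that same argument, and the point you flag about $G$ preserving $M$ is handled exactly by the semi-conjugacy, as you say.
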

Let $G|_M$ be the restriction of $G$ to the submanifold $M\subset {\mathbb P}^4$. It is not difficult to see that $I_n(G)\cap M=I_n(G|_M)$. However, since the extended indeterminacy set involves taking the closure, it is not clear if $E_G\cap M=E_{G|_M}$. Nevertheless, since the map $X$ is a homeomorphism we have $E_{G|_M}=X(E_F)$. Lemma 5.6 then implies that $X(E_F)\neq M$, and the next corollary follows from Theorem \ref{thm: main} and the relation (\ref{eq: spec}).
\begin{cor}
Let $G|_M$ be the restriction of $G$ to the submanifold $M$. Then its Julia set
${\mathcal J}(G|_M)=X(p(A_\pi))\cup X(E_{F}).$
\end{cor}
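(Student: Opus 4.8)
The plan is to transport the description of $\mathcal{J}(F)$ from Theorem \ref{thm: main} through the homeomorphism $X$, using that $X$ intertwines $F$ and $G|_M$ (Corollary \ref{eq: GM}). First I would record the basic dynamical fact: if $X\colon {\mathbb P}^2 \to M$ is a homeomorphism satisfying $X\circ F = G|_M \circ X$ on the relevant domains, then $X$ carries Fatou points to Fatou points and Julia points to Julia points, so $\mathcal{J}(G|_M) = X(\mathcal{J}(F))$. The one subtlety is that $F$ and $G|_M$ are only rational (not everywhere defined), so equicontinuity must be understood relative to the extended indeterminacy sets; the point is that $X$ is a homeomorphism of all of ${\mathbb P}^2$ onto $M$ that matches up the maps wherever both are defined, and, as already noted in the paragraph preceding the statement, $E_{G|_M} = X(E_F)$. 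With the convention $\mathcal{J}(G|_M) = M \setminus \mathcal{F}(G|_M)$ and the definition of Fatou points of a rational self-map of $M$ via diameters of iterated images (Definition 4.10 applied to the manifold $M$), the conjugacy relation $X\circ F^{\circ n} = G^{\circ n}\circ X$ on ${\mathbb P}^2 \setminus E_F$ forces $p$ to be a Fatou point of $F$ if and only if $X(p)$ is a Fatou point of $G|_M$, since $X$ is bi-Lipschitz on compact sets and hence comparably distorts Fubini--Study diameters.

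Next I would invoke Theorem \ref{thm: main}, which gives $\mathcal{J}(F) = p(A_\pi) \cup E_F$. Applying $X$ and using that $X$ is a bijection (so it commutes with unions), we get
\begin{equation*}
\mathcal{J}(G|_M) = X\big(\mathcal{J}(F)\big) = X\big(p(A_\pi)\big) \cup X(E_F),
\end{equation*}
which is exactly the claimed formula. The only remaining loose end is to make sure that Lemma 5.6 is actually being used in the right place: since $E_{G|_M} = X(E_F)$ and Lemma 5.6 says $E_F \neq {\mathbb P}^2$, we get $E_{G|_M} = X(E_F) \neq X({\mathbb P}^2) = M$, so $M \setminus E_{G|_M}$ is nonempty and the Fatou/Julia dichotomy for $G|_M$ on $M$ is non-degenerate; this is what licenses us to speak of $\mathcal{J}(G|_M)$ as a proper subset and to run the conjugacy argument at all.

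I would organize the write-up in three short moves: (i) state and briefly justify that a homeomorphism conjugating two rational maps identifies their Julia sets, noting the extended-indeterminacy-set bookkeeping and that $X$ is bi-Lipschitz on compacta for Fubini--Study distances; (ii) cite $E_{G|_M} = X(E_F)$ together with Lemma 5.6 to guarantee $E_{G|_M}\subsetneq M$; (iii) apply $X$ to the equality $\mathcal{J}(F) = p(A_\pi)\cup E_F$ from Theorem \ref{thm: main}. The main obstacle, and the part deserving the most care, is step (i): one must check that the definition of Fatou point on the submanifold $M$ (which is only a quasi-projective variety, not all of ${\mathbb P}^4$) behaves well under the parametrization $X$, i.e., that diameters measured in the ambient Fubini--Study metric of ${\mathbb P}^4$ restricted to $M$ are comparable, on compact sets, to the images under $X$ of Fubini--Study diameters in ${\mathbb P}^2$. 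Since $X$ is a linear isomorphism $\C^3 \to \hat{M}$ descending to a biholomorphism ${\mathbb P}^2 \to M$, this comparability is automatic on any compact set, so the obstacle is really just one of careful exposition rather than genuine difficulty.
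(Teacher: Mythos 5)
Your proposal is correct and follows essentially the same route as the paper: the paper also deduces the corollary by transporting ${\mathcal J}(F)=p(A_\pi)\cup E_F$ (Theorem \ref{thm: main}) through the conjugating homeomorphism $X$, using $E_{G|_M}=X(E_F)$ and Lemma 5.6 to ensure $E_{G|_M}\neq M$. The only cosmetic remark is that $M=\phi(\hat{M})$ is a linearly embedded copy of ${\mathbb P}^2$ inside ${\mathbb P}^4$, hence compact (not merely quasi-projective), which makes your metric-comparability step for $X$ even more immediate.
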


We now propose a conjecture to end this round of discussions. Consider the Grigorchuk group ${\mathcal G}$. Recall that $p(R_\pi)$ is the projective spectrum of the pencil \[R_\pi(z)=z_0I+z_1\pi(a)+z_2\pi(b)+z_3\pi(c)+z_4\pi(d),\] where $z\in {\mathbb P}^4$. Proposition 3.7 implies that $p(R_\pi)$ is an invariant set for the map $G$ above. While a full description of the spectrum $p(R_\pi)$ or the Julia set ${\mathcal J}(G)$ seems hard to obtain at this point, the following conjecture seems natural in view of Theorem 5.11, Corollary 6.2 and Corollary 6.5.

\begin{Conj}
 $p(R_\pi)\subset {\mathcal J}(G)$.
\end{Conj}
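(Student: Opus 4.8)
The plan is to mimic the proof of Theorem~\ref{thm: main}. Two ingredients are already in hand. First, $p(R_\pi)$ is closed, since the invertible elements of a Banach algebra form an open set and $z\mapsto R_\pi(z)$ is a polynomial (hence continuous) map. Second, $p(R_\pi)$ is $G$--invariant by Proposition~3.7. Because the extended indeterminacy set $E_G$ is always contained in $\J(G)$, and both $\J(G)$ and $p(R_\pi)$ are closed, it suffices to prove that no point of $p(R_\pi)\setminus E_G$ is a Fatou point of $G$; equivalently, that near every such point the iterates $\{G^{\circ n}\}$ are not normal.

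On the slice $M=\phi(\hat M)$ the requisite machinery is essentially in place. By Proposition~\ref{prop: FG} and Corollary~\ref{eq: GM} the map $G|_M$ is conjugate, through the homeomorphism $X$, to the dihedral map $F$; Corollary~6.5 records $\J(G|_M)=X\bigl(p(A_\pi)\bigr)\cup X(E_F)$, and $X\bigl(p(A_\pi)\bigr)=p(R_\pi)\cap M$ by Corollary~\ref{eq: spec}. Hence the non--convergence argument from the proof of Theorem~\ref{thm: main} transports verbatim: given $p\in p(R_\pi)\cap M$, there are points $q$ arbitrarily close to $p$ inside $M$ whose $\tau$--coordinate is non--dyadic and for which $\{F^{\circ n}(q)\}$, and therefore $\{G^{\circ n}(X(q))\}$, does not converge. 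Provided one also knows that such points $X(q)$, regarded in $\mathbb{P}^4$, avoid $E_G$, it follows that $p\in\J(G)$, and so $p(R_\pi)\cap M\subset\J(G)$.

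Reaching the points of $p(R_\pi)$ that do not lie on $M$ is where genuinely new work is needed. The structural analogue of the dihedral argument would be a ``$\tau$--function'' for $\G$: a rational map $\psi\colon\mathbb{P}^4\to Y$ onto a lower--dimensional space $Y$, together with a self--map $\Phi$ of $Y$, satisfying $\psi\circ G=\Phi\circ\psi$ off the indeterminacy loci, such that $\psi\bigl(p(R_\pi)\bigr)$ lands in the Julia set of $\Phi$ and $\{\Phi^{\circ n}\}$ is nowhere normal there; the proof of Theorem~\ref{thm: main} would then carry over with only cosmetic changes. Alternatively, one could try to show that the repelling periodic points of $G$ are dense in $p(R_\pi)$ --- Proposition~\ref{prop: Gfix} together with Corollary~\ref{eq: spec} already places one fixed point of $G$, namely $[-1:-2:1:1:1]$, inside $p(R_\pi)$ --- since repelling periodic points cannot be Fatou points and $\J(G)$ is closed, this too would yield the conjecture.

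The principal obstacle is that $p(R_\pi)$ is, away from the slice $M$, still not understood: the Bartholdi--Grigorchuk analysis (\cite{BG,Gr84}) controls only the two--parameter family attached to $\hat M$ --- precisely what makes that slice tractable --- whereas no reduction of the full five--variable dynamics of $G$ to a lower--dimensional model is available, so neither the map $\psi$ nor the density of repelling cycles can currently be produced. A secondary difficulty, present even on $M$, is the extended indeterminacy set: as noted after Corollary~\ref{eq: GM}, it is not known whether $E_G\cap M$ equals $E_{G|_M}=X(E_F)$, so one must control the limit points of the indeterminacy sets $I_n(G)$ --- for instance via an explicit description of $I_n(G)$ in the style of Proposition~\ref{prop: In'} --- before even $p(R_\pi)\cap M\subset\J(G)$ can be asserted. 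Settling these two points should, I expect, establish the conjecture.
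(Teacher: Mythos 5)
The statement you were asked about is stated in the paper as a \emph{conjecture}: the paper offers no proof of $p(R_\pi)\subset\J(G)$, only the remark that it ``seems natural in view of Theorem \ref{thm: main}, Corollary \ref{eq: spec} and Corollary 6.5.'' Your proposal, as you yourself say, is a research plan rather than a proof, so there is no complete argument here to certify; what you assemble on the slice $M=\phi(\hat M)$ --- the conjugacy $X$ between $F$ and $G|_M$ (Proposition \ref{prop: FG}, Corollary \ref{eq: GM}), the identification $X(p(A_\pi))=p(R_\pi)\cap M$ (Corollary \ref{eq: spec}), and the non-normality argument from Theorem \ref{thm: main} --- is essentially the same body of evidence the authors cite as motivation. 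Your diagnosis of why the full conjecture is out of reach (no analogue of the semi-conjugacy $\tau\circ F=T\circ\tau$ for the five-variable map $G$, and no description of $p(R_\pi)$ away from $\hat M$) matches the paper's own concluding remarks.

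Two concrete gaps remain even in the part you hope to salvage. First, $p(R_\pi)\cap M\subset\J(G)$ does not follow formally from Corollary 6.5: the paper's Fatou set is defined by equicontinuity of iterates on $U\setminus E$, and the exceptional set relevant to $G$ on $\mathbb{P}^4$ is $E_G$, while the one relevant to $G|_M$ is $E_{G|_M}=X(E_F)$. Since $I_n(G)\cap M=I_n(G|_M)$ only gives $E_{G|_M}\subset E_G\cap M$ (possibly strictly, as the paper notes), equicontinuity of $\{G^{\circ n}\}$ off $E_G$ near a point of $M$ removes a potentially larger set than the restricted definition does, so membership in $\J(G|_M)$ does not automatically place a point in $\J(G)$; one must either prove $E_G\cap M=E_{G|_M}$ or run the non-convergence argument directly with $\mathbb{P}^4$-neighborhoods while verifying the witness points $X(q)$ avoid $I_\infty(G)$ --- you flag this but do not resolve it. Second, the alternative route via density of repelling periodic points of $G$ in $p(R_\pi)$ is unsupported: Proposition \ref{prop: Gfix} and the Appendix exhibit only finitely many fixed points, the one known to lie in $p(R_\pi)$, namely $[-1:-2:1:1:1]$, is not shown to be repelling, and nothing is known about higher-period cycles. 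So the proposal is a sensible outline consistent with the paper, but the conjecture remains open both in the paper and in your write-up.
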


\section{Concluding remarks}

Theorem \ref{thm: main} and Corollary 6.5 are clear evidences of a natural connection among self-similarity, Julia set, and projective spectrum. Even though $D_\infty$ is a rather simple group, the associated dynamical map $F$ defined in (\ref{eq: FP}) is by no means trivial. Two main factors that contributed to the establishment of Theorem \ref{thm: main} are:
 \begin{enumerate} \item an explicit description of the projective spectrum of $D_\infty$ (Theorem \ref{thm: PA});
 \item the semi-conjugacy of $F$ with the Tchebyshev polynomial $T(x)$ (Proposition 5.3). 
\end{enumerate}
 No analogous facts are currently known for the Grigorchuk group ${\mathcal G}$. However, despite the fact that ${\mathcal G}$ is infinitely presented, the $2$-similarity of its Koopman representation $\pi$ as described in Section 3.3 seems simple enough to warrant some further progresses along this line. It is therefore our great interest to continue this study for the group ${\mathcal G}$ and also to investigate on some other self-similar groups.

\section{Appendix}

\subsection{Proof of Proposition \ref{prop: Ffix}}

$(a)$ Finding fixed points is generally not difficult. For the map $F$ it involves solving the following system of equations:
\[ z_0(z_0^2-z_1^2-z_2^2)= z_0,\quad z_1^2 z_2=z_1, \quad  z_2(z_0^2-z_2^2)=z_2 .\]
By direct computation one verifies that only $(0,0,0), (0, \pm i, \mp i),$  and the points in the surface \[Y_F:=\{ z~\in~\mathbb{C}^3~\mid~z_0 ^2 -z_2 ^2 =1, z_1 =0 \}\] are fixed under $F$.
Observe that $\{(0,0,0), (0, \pm i, \mp i)\}\subset P(A_\pi)$  and $Y_F\subset P^c(A_\pi)$.
To classify the fixed points, one computes that
\[
F'(z) = \big( \frac{\partial f_{j-1}}{\partial z_{k-1}} \big)=
\begin{bmatrix}
3z_0 ^2 -z_1 ^2 -z_2 ^2 & -2z_0 z_1 & -2z_0 z_2 \\
0 & 2z_1 z_2 & z_1 ^2 \\
2z_0 z_2 & 0 & z_0 ^2 -3z_2 ^2
\end{bmatrix},
\] and hence $det( F'(z))= 6z_1z_2 (z_0^2-z_2^2)(z_0^2-z_1^2-z_2^2).$

$(b)$ To verify that the point $(0,0,0)$ is the only attracting fixed point of $F$, we first observe that it is a super attracting fixed point since
\[ 
\det \big| F'\big((0,0,0) \big) - \lambda I \big| = 
\det \begin{bmatrix}
-\lambda &0 & 0 \\
0 & -\lambda &0 \\
0 & 0 & -\lambda
\end{bmatrix}
=\lambda^3.
\]
Then we must then check that no points in $Y_F$ is attracting. If $z \in Y_F$ then
\begin{align*} 
\det \big| F'\big(z \big) - \lambda I \big| &= \det
\begin{bmatrix}
3z_0 ^2 - z_2 ^2 -\lambda &0 & -2z_0 z_2 \\
0 & -\lambda &0 \\
0 & 0 & z_0 ^2 -3z_2 ^2 - \lambda
\end{bmatrix} \\
&= -\lambda(2z_0^2 +1 -\lambda ) (1-2z_2 ^2 -\lambda)=0.
\end{align*}
Then $ \lambda =0$ is an eigenvalue, and the other two eigenvalues are $ \lambda_1=2z_0 ^2 +1$ and $\lambda_2 = 1-2z_2 ^2$. Since
\[ \lambda_1 + \lambda_2 = 2(z_0^2 -z_2 ^2 +1)=4, \]
either $|\lb_1|$ or $|\lb_2|$ must be larger than $1$, and hence $z$ is not attracting. Further, none of the points in $Y_F$ are repelling because $0$ is always an eigenvalue for $F'(z)$ as we just saw.
Having only one attracting fixed point is notable because it means that there is only one basin of attraction at a finite point. One may also verify directly that $(0,\pm i, \mp i)$ are repelling points of $F$.

$(c)$ To determine the fixed points for $F$ as defined in (\ref{eq: FP}) one observes that a point $z=[z_0:z_1z_2]$ is fixed if there exists a complex number $\lambda\neq 0$ such that
\[z_0(z_0^2-z_1^2-z_2^2) =\lambda z_0; \ z_1^2 z_2=\lambda z_1; \ z_2(z_0^2-z_2^2)=\lambda z_2.\]
Since the calculation is similar we shall not include it here.
\endproof

\subsection{Proof of Proposition \ref{prop: Gfix}}

To find the fixed points of $G$ we must solve the following system of equations:
\begin{equation}
\label{eq: grig eqs}
z_0 \al-z_1^2(z_0+z_4)=z_0,\   z_1^2 (z_2+z_3)=z_1, \  z_4 \al= z_2,\  z_2 \al=z_3,\   z_3 \al=z_4.
\end{equation}
We will find that the $\al$ notation is quite useful in this computation. We start by focusing on the final three conditions which have an apparent relationship, i.e.
\[z_4\al= z_2, \quad z_2 \al=z_3, \quad  z_3 \al=z_4.\]
Plugging in $z_4$ from the last equation into the first gives $z_3 \al^2 =z_2$. We then plug this $z_2$ into the middle equation to obtain $z_3 \al^3 = z_3.$
Therefore, if $G(z)=z$ then either $z_3=0 \ \text{ or } \ \al^3 =1.$
This gives us three cases to discuss separately. 

First, if $z_3=0$, then $z_2=z_4=0$, and hence the second equation implies $z_1=0$. This means the first expression becomes $z_0 \al = z_0.$
Then either $z_0 =0$ or $\al = 1$. But if $z_1=z_2=z_3=z_4=0$ then $\al = z_0^2.$
Therefore our first case yields the following fixed points
\begin{equation}
\label{eq: 3 fixed pts}
{\bf 0}=(0,0,0,0,0)  \text{   and   }  (\pm1, 0,0,0,0).
\end{equation}

In the second case, we assume that $\al =1$. Then $z_2=z_3=z_4=\gamma \neq 0$ and our remaining equations become
\[z_0-z_1^2(z_0+\gamma) = z_0 \quad \text{ and } \quad z_1^2 (2 \gamma)=z_1.\] 
We split this into two more subcases based on the second equation, namely either
\[ z_1=0 \quad \text{ or } \quad z_1 = \frac{1}{2\gamma}. \]
First, if $z_1=0$ then $z_0$ must still satisfy that
\[\al = (z_0+ \gamma)^2 - 4\gamma =1.\]
Hence
\[z_0 = -\gamma \pm \sqrt{1+4 \gamma^2}.\]
This gives a surface of fixed points
\begin{equation}
\label{eq: G fixed surface}
 Y_1:=\{z \in \C^5 \mid z= (-\gamma \pm \sqrt{1+4 \gamma^2},0, \gamma, \gamma, \gamma), \gamma \neq 0\}.
\end{equation}
Lastly, we examine the case when $z_1 = \frac{1}{2 \gamma}$. Then for the first component to be fixed we must have
\[ z_0- \frac{1}{4\gamma^2}(z_0+\gamma) = z_0.\]
This requires that $z_0 = -\gamma$, and hence $\al= (2\gamma)^2-(2\gamma)^2 =0 \neq 1.$
This contradicts the beginning of this subcase when $\al =1$, therefore this case yields no additional fixed points.

The third case to consider is when $\alpha^2+ \alpha+1=0$. We manipulate the last three equations of (\ref{eq: grig eqs}) to find $z_2$ and $z_3$ in terms of $z_4$ thus
\[ z_2 = z_4 \alpha \quad \text{ and } \quad z_3 = z_4 \alpha ^2.\]
Then the first two equations become
\[ z_0 (\alpha -1) -z_1 ^2(z_0+z_4)=0 \quad \text{ and } \quad -z_1(z_1 z_4+1)=0 .\]
Focusing on the later, we first examine if $z_1=0$ then
\[z_0 (\alpha -1) =0.\]
Clearly, $\alpha \neq 1$ in this case; then we must have $z_0=0$, but this contradicts our equation for $\alpha$
\[ \alpha = (z_0+z_4)^2-(z_2+z_3)^2= z_4^2 -z_4^2=0,\]
so there are no fixed points from this case when $z_1=0$. Then let $z_1= -\frac{1}{z_4}$ and $z_4 \neq 0$ because we addressed that case previously. Then for the point to be fixed
\[z_0(\alpha-1) -\frac{1}{z_4^2} (z_0+z_4) =0.\]
Thereby,
\[z_0 = \frac{z_4}{z_4^2(\alpha-1)-1}\]
the last step is to check what $z_4$ satisfies our equation for $\alpha$. 
\[\alpha = (z_0+z_4)^2- (z_2+z_3)^2 = z_4^2\bigg( \frac{1}{z_4^2(\alpha-1)-1}+1 \bigg)^2 -z_4^2 = z_4^2\bigg(\frac{2z_4^2(\alpha-1)-1}{(z_4^2(\alpha -1)-1)^2} \bigg).\]
Then we solve for $z_4$ while utilizing the fact that $\alpha^2 +\alpha +1 =0$ to obtain
\begin{align*}
 \big(\alpha (\alpha-1)^2-2(\alpha-1) \big)z_4^4 + \big(1-2 \alpha (\alpha -1) \big) z_4^2 + \alpha &=0 \\
(\alpha +5) z_4^4 + (4 \alpha +3) z_4^2 +\alpha &=0,
\end{align*}
and consequently
\[ z_4^2 = w_{\pm}(\alpha):=\frac{-4 \alpha -3 \pm i \sqrt{8\alpha +3}}{2 \alpha +10}.\]
Denoting $z_4$ by $\beta$, we have the set of fixed point $Y_2$ as describe before Proposition \ref{prop: Gfix}. 

To classify the fixed points of $G$ using the Jacobian, we use the notation $\beta = z_0+z_4$ and $\eta = z_2+z_3$ for convenience and compute that
\begin{align*}
G'(z)=
\begin{bmatrix}
\al +2z_0\beta-z_1^2  & 2z_1 \beta & -2z_0 \eta & -2z_0 \eta^2 & 2 z_0 \beta-z_1^2 \\
0 & 2z_1 \eta & z_1^2 & z_1^2 & 0 \\
2z_4 \beta & 0 & -2z_4 \eta & -2z_4 \eta & \al +2z_4\beta\\
2z_2 \beta & 0 & \al -2z_2 \eta & -2z_2 \eta & 2z_2\beta \\
2z_3 \beta & 0& -2z_3 \eta & \al -2z_3 \eta & 2z_3 \beta 
\end{bmatrix}.
\end{align*} 

To determine the fixed points for $G: {\mathbb P}^4\to  {\mathbb P}^4$, one considers the system of equations:
\begin{equation}
\label{eq: grig eqs}
z_0 \al-z_1^2(z_0+z_4)=\lb z_0,\   z_1^2 (z_2+z_3)=\lb z_1, \  z_4 \al= \lb z_2,\  z_2 \al=\lb z_3,\   z_3 \al=\lb z_4.
\end{equation}
Going through similar steps one obtains the set of fixed point $Y_1$ and $Y_2$. However, an additional point $[-1: -2: 1: 1: 1]$ is obtained in the case $z_2=z_3=z_4$ and $\alpha(z)=\lb$. One checks that $G(-1,-1,1,1,1)=(4,8,-4,-4,-4)$. One can verify using the commutative diagram (\ref{eq: GM}) that $[-1: -2: 1: 1: 1]\in p(R_\pi)$. However, determining which points in $Y_2$ are in $p(R_\pi)$
does not seem easy.

To see that the origin ${\bf 0}$ is the only attracting fixed point of $G$, one first observes that since
\begin{align*} 
\det \big| G'({\bf 0}) - \lambda I \big| &= 
\det \begin{bmatrix}
-\lambda &0 & 0 &0&0\\
0 & -\lambda &0 &0&0\\
0 & 0 & -\lambda &0&0 \\
0&0 & 0 & -\lambda &0 \\
0&0 & 0 &0 & -\lambda 
\end{bmatrix}=\lambda^5,
\end{align*}
the origin ${\bf 0}$ is indeed an attracting fixed point.
Now assume $z \in Y_1$. For simplicity we won't substitute in for $z_0$ immediately. Using the fact that in this case $\al(z)=1$, we compute $\det \big| G'(z) - \lambda I \big|$ as
\begin{align*} & 
\det \begin{bmatrix}
1 +2z_0(z_0+\gamma) -\lb &0 & -4z_0 \gamma &-8 z_0 \gamma^2&2z_0(z_0+ \gamma)\\
0 & -\lambda &0 &0&0\\
2\gamma(z_0+\gamma) & 0 &-4 \gamma^2 -\lambda &-4\gamma^2& 1 +2 \gamma (z_0+\gamma) \\
2\gamma(z_0+\gamma)&0 &1-4\gamma^2  &-4\gamma^2 -\lambda &2\gamma(z_0+\gamma) \\
2\gamma(z_0+\gamma)&0 & -4\gamma^2 &1-4\gamma^2 &2\gamma(z_0+\gamma) -\lambda 
\end{bmatrix}\\
&= 
 -\lb\det \begin{bmatrix}
1 +2z_0(z_0+\gamma)-\lb& -4z_0 \gamma &-8 z_0 \gamma^2&2z_0(z_0+ \gamma)\\
2\gamma(z_0+\gamma)  &-4 \gamma^2 -\lambda &-4\gamma^2& 1 +2 \gamma (z_0+\gamma) \\
2\gamma(z_0+\gamma) &1-4\gamma^2  &-4\gamma^2 -\lambda &2\gamma(z_0+\gamma) \\
2\gamma(z_0+\gamma) & -4\gamma^2 &1-4\gamma^2 &2\gamma(z_0+\gamma) -\lambda 
\end{bmatrix}\\
&= 
 -\lb\det \begin{bmatrix}
1 +2z_0(z_0+\gamma)-\lb& -4z_0 \gamma &-8 z_0 \gamma^2&2z_0(z_0+ \gamma)\\
2\gamma(z_0+\gamma)  &-4 \gamma^2 -\lambda &-4\gamma^2& 1 +2 \gamma (z_0+\gamma) \\
0&1+ \lb & -\lambda &-1\\
0 & \lb&1 &-1 -\lambda 
\end{bmatrix}.
\end{align*}
Then expanding using the third row, we can break the determinant above into a sum of three parts, i.e., $\det \big| G'(p) - \lambda I \big|  = A+B+C$ where
\begin{align*}
A &= \lb(1+\lb)\det \begin{bmatrix}
1 +2z_0(z_0+\gamma)-\lb &-8 z_0 \gamma^2&2z_0(z_0+ \gamma)\\
2\gamma(z_0+\gamma)  &-4\gamma^2& 1 +2 \gamma (z_0+\gamma) \\
0 &1 &-1 -\lambda 
\end{bmatrix},\\
B & =
\lb^2 \det \begin{bmatrix}
1 +2z_0(z_0+\gamma)-\lb& -4z_0 \gamma &2z_0(z_0+ \gamma)\\
2\gamma(z_0+\gamma)  &-4 \gamma^2 -\lambda & 1 +2 \gamma (z_0+\gamma) \\
0 & \lb&-1 -\lambda 
\end{bmatrix}, \\
C &=-\lb \det \begin{bmatrix}
1 +2z_0(z_0+\gamma)-\lb& -4z_0 \gamma &-8 z_0 \gamma^2\\
2\gamma(z_0+\gamma)  &-4 \gamma^2 -\lambda &-4\gamma^2 \\
0 & \lb&1
\end{bmatrix}.
\end{align*}
After simplicafication we obtain the eigenvalue equation
\[\det (G'(z)-\lambda I)= \lb (\lb^2 + \lb +1) \beta =0, \]where
 \[\beta=  (1+2z_0(z_0+\gamma)-\lb)(6 \gamma^2 -2 \gamma z_0 + \lb -1)-8 \gamma^2z_0(z_0+\gamma) (2\gamma+1)+4z_0 \gamma(z_0+\gamma)^2.\]
This shows that three of the eigenvalues are $\{0, \frac{-1\pm \sqrt{3}i}{2} \}$. Thus, there exist $\lb_1, \lb_2$ such that $|\lb_1|=|\lb_2|=1$, hence the point $z$ is neither attracting nor repelling.

Likewise we can show by direct computation using Matlab that the eight points in $Y_2$ are not attracting, because the Jacobian $G'$ at each point in the set has at least one eigenvalue with modulus greater than $1$. We omit the computations here for simplicity.

\end{document}